\DeclareSymbolFont{tipa}{T3}{cmr}{m}{n}
\DeclareMathAccent{\invbreve}{\mathalpha}{tipa}{16}
\newtheorem{theorem}{Theorem}[section]
\newtheorem{lemma}[theorem]{Lemma}
\newtheorem{proposition}[theorem]{Proposition}
\newtheorem{assumption}[theorem]{Assumption}
\newtheorem{algorithm}[theorem]{Algorithm}
\theoremstyle{definition}
\newtheorem{definition}[theorem]{Definition}
\theoremstyle{remark}
\newtheorem{remark}[theorem]{Remark}
\numberwithin{equation}{section}
\newcommand{\eps}{\varepsilon}
\newcommand{\R}{\mathbb R}
\newcommand{\N}{\mathbb N}
\newcommand{\cL}{\mathcal L}
\newcommand{\cS}{\mathcal S}
\newcommand{\Lis}{\cL\mathrm{is}}
\newcommand{\identity}{\mathrm{Id}}
\DeclareMathOperator{\ran}{ran}
\DeclareMathOperator{\supp}{supp}
\DeclareMathOperator{\clos}{clos}
\DeclareMathOperator{\blockdiag}{blockdiag}
\DeclareMathOperator{\diam}{diam}
\DeclareMathOperator*{\argmin}{argmin}
\DeclareMathOperator{\dist}{dist}
\DeclareMathOperator{\essinf}{ess\,inf}
\DeclareMathOperator{\Span}{span}
\newcommand{\new}[1]{{\color{black}{#1}}}
\newcommand{\comment}[1]{{\color{blue}{}}}
\newcommand{\be}{\begin{equation}}
\newcommand{\ee}{\end{equation}}
\newcommand{\1}{\mathbb 1}
\newcommand{\bbT}{\mathbb{T}}
\newcommand{\tria}{{\mathcal T}}
\newcommand{\uumlaut}{{\"u}}
\newcommand{\gen}{{\tt gen}}
\DeclareRobustCommand{\ubar}[1]{\underaccent{\bar}{#1}}
\newcommand{\udelta}{{\ubar{\delta}}}
\newenvironment{algotab}%
{\par\begin{samepage}%
\begin{tabbing}\ttfamily%
 \hspace*{5mm}\=\hspace{3ex}\=\hspace{3ex}\=\hspace{3ex}\=\hspace{3ex}%
\=\hspace{3ex}\=\hspace{3ex}\=\hspace{3ex}\=\hspace{3ex}\kill}%
{\end{tabbing}\end{samepage}}
\title[An adaptive method for parabolic evolution equations]{A wavelet-in-time, finite element-in-space adaptive method for parabolic evolution equations}
\date{\today}
\author{Rob Stevenson, Raymond van Veneti\"{e}, Jan Westerdiep}
\address{
Korteweg--de Vries (KdV) Institute for Mathematics, University of Amsterdam, P.O. Box 94248, 1090 GE Amsterdam, The Netherlands.
}
\email{r.p.stevenson@uva.nl, j.h.westerdiep@uva.nl, R.vanVenetie@uva.nl}
\thanks{The second and third author have been supported by the Netherlands Organization for Scientific Research (NWO) under contract.~no.~613.001.652}
\subjclass[2010]{35K20, 
65F08, 
65M12, 
65M60, 
65T60. 
}
\keywords{Space-time variational formulations of parabolic PDEs, quasi-best approximations, least squares methods, adaptive approximation, tensor product approximation, optimal preconditioners}
\begin{document}

\begin{abstract} In this work, an $r$-linearly converging adaptive solver is constructed for parabolic evolution equations in a simultaneous space-time variational formulation. Exploiting the product structure of the space-time cylinder, the family of trial spaces that we consider are given as the spans of wavelets-in-time and (locally refined) finite element spaces-in-space. Numerical results illustrate our theoretical findings.
\end{abstract}

\maketitle
\section{Introduction}
This paper is about the adaptive numerical solution of parabolic evolution equations written in a simultaneous space-time variational formulation. In comparison to the usually applied time-marching schemes, simultaneous space-time solvers offer the following potential advantages:
\begin{itemize}
\item local, adaptive refinements simultaneous in space and time (\cite{249.4,243.867,75.38}),
\item quasi-best approximation from the selected trial space (`Cea's lemma')  (\cite{11,169.055,249.99}), being a necessary requirement
for proving optimal rates for adaptive routines (\cite{38.4,171.66,243.867}),
\item superior parallel performance (\cite{70.2,234.7,169.06,306.65}),
\item using the product structure of the space-time cylinder,  sparse tensor product approximation (\cite{76.21,38.4,171.66,243.867}) which allows to solve the whole time evolution at a complexity of solving the corresponding stationary problem.
\end{itemize}
Other relevant publications on space-time solvers include \cite{249.2, 169.05, 249.3,64.5765,64.585}.

In any case without applying sparse tensor product approximation, a disadvantage of the space-time approach is the larger memory consumption because instead of solving a sequence of PDEs on a $d$-dimensional space, one has to solve one PDE on a $(d+1)$-dimensional space. This disadvantage, however, disappears when one needs simultaneously the whole time evolution as for example with problems of optimal control (\cite{77.5,19.96}) or data-assimilation (\cite{58.6}).

\subsection{Parabolic problem in a simultaneous space-time variational formulation}
 For some separable Hilbert spaces $V \hookrightarrow H$ with dense embedding (e.g.~$H_0^1(\Omega)$ and $L_2(\Omega)$ for the model problem of the heat equation on a spatial domain $\Omega \subset \R^d$), and a boundedly invertible $A(t)=A(t)'\colon V \rightarrow V'$ with $(A(t)\cdot)(\cdot) \eqsim \|\cdot\|_V^2$ (e.g.~ $(A(t)\eta)(\zeta)=\int_\Omega \nabla \eta \cdot \nabla \zeta\,d{\bf x}$), we consider
$$
\left\{
\begin{array}{rl}
\frac{d u}{d t}(t) +A(t) u(t)&\!\!\!= g(t) \quad(t \in (0,T)),\\
u(0) &\!\!\!= u_0.
\end{array}
\right.
$$
An application of a variational formulation of the PDE over space and time leads to an equation
\be \label{I1}
\left[\begin{array}{@{}c@{}} B \\ \gamma_0\end{array} \right] u=\left[\begin{array}{@{}c@{}} g \\ u_0\end{array} \right]
\ee
where, with $X:=L_2(I;{V}) \cap H^1(I;V')$ and $Y:=L_2(I;{V})$,
the operator at the left hand side is  boundedly invertible $X \rightarrow Y'\times H$.

\subsection{Our previous work}
In \cite{38.4,243.867} we equipped $X$ and $Y$ with Riesz bases being tensor products of wavelet bases in space in time, and $H$ with some spatial Riesz basis. Consequently, the equation \eqref{I1} got an equivalent formulation as a bi-infinite well-posed matrix-vector equation $\left[\begin{array}{@{}c@{}} {\bf B} \\ \bm{\gamma}_0\end{array} \right] {\bf u} =\left[\begin{array}{@{}c@{}} {\bf g} \\ {\bf u}_0\end{array} \right]$
(actually, in \cite{243.867}, we considered a formulation of first order, and in \cite{38.4} we used a variational formulation with essentially interchanged roles of $X$ and $Y$, which however is irrelevant for the current discussion).
To get a coercive bilinear form we formed normal equations to which we applied an adaptive wavelet scheme (\cite{45.2}). With such a scheme the norm of a sufficiently accurate approximation of the (infinite) residual vector of a current approximation is used as an a posteriori error estimator. The coefficients in modulus of this vector are applied as local error indictors in a bulk chasing \new{(or D\"{o}rfler marking)} procedure. The resulting adaptive algorithm converges at the best possible rate in linear computational complexity.

The goal of the current work is to investigate to what extent similar optimal theoretical results can be shown
\new{when we replace the wavelets-in-space by finite element spaces. In any case for locally refined partitions, a disadvantage will be that these spaces cannot be equipped with bases that are uniformly `stable' w.r.t.~a range of Sobolev norms. On the other hand, the application of finite elements has quantitative advantages, as provided by a very localized single-scale basis and efficient multigrid preconditioners, and it avoids a cumbersome construction of wavelet bases on domains that are not hyperrectangles.}

\subsection{Least squares minimization} Without having Riesz bases for $X$ and $Y$, already the step of first discretizing and then forming normal equations does not apply, and we reverse their order.
A problem equivalent to \eqref{I1} is to compute
\be \label{I2}
u=\argmin_{w \in X} \|Bw -g\|_{Y'}^2+\|\gamma_0 w-u_0\|_H^2.
\ee
An obvious approach for the numerical approximation is to consider the minimization over finite dimensional subspaces $X^\delta$ of $X$, which however is not feasible because of the presence of the dual norm.

For trial spaces $X^\delta$ that are `full' (or `sparse') tensor products of finite element spaces in space and time, in \cite{11}
it was shown how to construct corresponding test spaces $Y^\delta \subset Y$ of similar type and dimension, such that $(X^\delta,Y^\delta)$ is uniformly \emph{inf-sup stable} meaning that
 when the continuous dual norm $\|\cdot\|_{Y'}$ is replaced by the discrete dual norm $\|\cdot\|_{{Y^\delta}'}$, a minimization over $X^\delta$ yields a quasi-best approximation to $u$ from $X^\delta$.
Such a family of trial spaces however does not allow to create a nested sequence of trial spaces by adaptive \emph{local} refinements.

\subsection{Family of inf-sup stable pairs of trial and test spaces} To construct an alternative, \new{considerably} larger family, let $\Sigma$ be a wavelet Riesz basis for $L_2(0,T)$ that, after renormalization, is also a Riesz basis for $H^1(0,T)$.
We equip this basis with a tree structure where every wavelet that is not on the coarsest level has a parent on the next coarser level.
In space, we consider the collection of all linear finite element spaces that can be generated by conforming newest vertex bisection starting from an initial conforming partition of a polytopal  $\Omega$ into $d$-simplices.
The restriction to linear finite elements is not essential and is made for simplicity only.
Now we consider trial spaces $X^\delta$ that are spanned by a number of wavelets each of them tensorized with a finite element space from the aforementioned collection. In order to be able to apply the arising system matrices in linear complexity (\cite{171.7,306.6}) we impose the condition that if a wavelet tensorized with a finite element space is in the spanning set, then so is its parent wavelet tensorized with a finite element space that includes the former one.

The infinite collection of finite element spaces can be associated \new{with} a hierarchical `basis' that can be equipped with a tree structure. Each hierarchical basis function, except those on the coarsest level, is associated \new{with} a node $\nu$ that was inserted as the midpoint of an edge connecting two nodes on the next coarser level, which nodes we call the parents of $\nu$. With this definition there is a one-to-one correspondence between the finite element spaces from
our collection and the spans of the sets of hierarchical basis functions that form trees.
Consequently, our collection of trial spaces $X^\delta$ consists of the spans of sets of tensor products of wavelets-in-time and hierarchical basis functions-in-space which sets are \emph{downwards closed}, also known as \emph{lower}, in the sense that if a pair of a wavelet and a hierarchical basis function is in the set, then so are all its parents in time and space.
Spaces from this collection can be `locally' expanded by adding the span of a tensor product of a wavelet and hierarchical basis function one-by-one.

For this family of spaces  $X^\delta$ we construct a corresponding family of spaces $Y^\delta \subset Y$ of similar type such that
each pair $(X^\delta,Y^\delta)$ is uniformly inf-sup stable, with the dimension of $Y^\delta$ being proportional to that of $X^\delta$.
Furthermore, using the properties of the wavelets in time and by applying multigrid preconditioners in space we construct optimal preconditioners at $X$ and $Y$-side which allow a fast solution of the discrete problems $\argmin_{w \in X^\delta} \|Bw -g\|_{{Y^\delta}'}^2+\|\gamma_0 w-u_0\|_H^2$.

\subsection{Adaptive algorithm} Having fixed the family of trial spaces, it remains to develop an algorithm that selects a suitable, preferably quasi-optimal, nested sequence of spaces from the family adapted to the solution $u$ of  \eqref{I2}.
The theory \new{on} adaptive \new{(Ritz--)Galerkin} approximations for such quadratic minimization problems is in a mature state. As noticed before, however, \new{straightforward} Galerkin approximations for  \eqref{I2}, i.e., \new{approximations obtained by replacing $X$ by $X^\delta$} are not computable \new{due to the presence of the dual norm.}

Therefore given $X^\delta$, let $X^\udelta \supset X^\delta$ be such that saturation holds, i.e., for some constant $\zeta<1$, it holds that $\inf_{w \in X^\udelta}\|u-w\|_X \leq \zeta \inf_{w \in X^\delta}\|u-w\|_X$. We now replace problem \eqref{I2} by
\be \label{I3}
u^{\udelta \udelta}=\argmin_{w \in X^\udelta} \|Bw -g\|_{{Y^\udelta}'}^2+\|\gamma_0 w-u_0\|_H^2,
\ee
where in the notation $u^{\udelta \udelta}$ the first instance of $\udelta$ refers to the space $Y^\udelta$ and the second to the space $X^\udelta$. Its (computable) Galerkin approximation from $X^\delta$ is given by
$$
u^{\udelta \delta}=\argmin_{w \in X^\delta} \|Bw -g\|_{{Y^\udelta}'}^2+\|\gamma_0 w-u_0\|_H^2.
$$
By a standard adaptive procedure, described below, we expand $X^\delta$ to some $X^{\tilde{\delta}} \subseteq X^\udelta$ such that $u^{\udelta \tilde{\delta}}$ is \new{nearer} to $u^{\udelta \udelta}$ than $u^{\udelta \delta}$ \new{is}.
Next, we replace $Y^\udelta$ by $Y^{\ubar{\tilde{\delta}}}$ (being the test space corresponding to $X^{\ubar{\tilde{\delta}}}$) and repeat (i.e.~consider \eqref{I3} with $(\udelta,\udelta)$ reading as $(\ubar{\tilde{\delta}},\ubar{\tilde{\delta}})$, and improve its Galerkin approximation $u^{\ubar{\tilde{\delta}} \tilde{\delta}}$ from $X^{\tilde{\delta}}$ by an adaptive enlargement of the latter space).

The adaptive expansion of the trial space $X^\delta$ to $X^{\tilde{\delta}}$ will be by the application of the usual solve-estimate-mark-refine paradigm, where
the error indicators are \new{given by} the coefficients of the residual vector w.r.t.~(modified) tensor product basis functions that were added to $X^\delta$ to create $X^\udelta$. In order for this collection of additional tensor product basis functions to be stable in $X$-norm, for this step we modify the hierarchical basis functions such that they get a vanishing moment, and therefore become closer to `real' wavelets.

Under the aforementioned saturation assumption, we prove that the overall adaptive procedure produces an $r$-linearly converging sequence to the solution.

\subsection{Numerical results} \new{We tested} the adaptive algorithm in several examples with a two-dimensional spatial domain. In all but one case, we observed a convergence rate equal to $1/2$, being the best that can be expected in view of the piecewise polynomial degree of the trial functions \emph{and} the tensor product construction, and for non-smooth solutions improving upon usual non-adaptive approximation.
Only for the case where $u_0=1$ and homogenous Dirichlet boundary conditions are prescribed, the observed rate was reduced to $0.4$. It is unknown whether or not this is the best non-linear approximation rate for our family of trial spaces.

Thanks to the use of optimal preconditioners and that of a carefully designed matrix-vector multiplication routine, which generalizes such a routine for sparse-grids introduced in \cite{18.83} to adaptive settings, we observe that
throughout the whole execution of the adaptive loop the total runtime remains proportional to the current number of unknowns. \comment{Paragraph deleted}

\subsection{Organization} This paper is organized as follows: In Sect.~\ref{S2} the well-posed space-time variational formulation of the parabolic problem is discussed, and in Sect.~\ref{S3} we discuss its inf-sup stable discretisation.
 The adaptive solution procedure is presented in Sect.~\ref{sec:adaptive}, and its convergence is proven.
The construction of the trial and test spaces is detailed in Sect.~\ref{Srealization}, and optimal preconditioners are presented.
In Sect.~\ref{S6}, the definition of the enlarged space $X^{\udelta}$ is given, and the construction of a stable basis of a stable complement space of $X^{\delta}$ in $X^{\udelta}$ is outlined. Numerical results are presented in Sect.~\ref{Snumerics}, and a conclusion is formulated in Sect.~\ref{Sconclusion}.

\subsection{Notations}\label{ssec:1.2}
In this work, by $C \lesssim D$ we will mean that $C$ can be bounded by a multiple of $D$, independently of parameters which \new{$C$} and \new{$D$} may depend on.
Obviously, $C \gtrsim D$ is defined as $D \lesssim C$, and $C\eqsim D$ as $C\lesssim D$ and $C \gtrsim D$.

For normed linear spaces $E$ and $F$, by $\cL(E,F)$ we will denote the normed linear space of bounded linear mappings $E \rightarrow F$,
and by $\Lis(E,F)$ its subset of boundedly invertible linear mappings $E \rightarrow F$.
We write $E \hookrightarrow F$ to denote that $E$ is continuously embedded into $F$.
For simplicity only, we exclusively consider linear spaces over the scalar field $\R$.

\section{Space-time formulations of a parabolic evolution problem} \label{S2}
Let $V,H$ be separable Hilbert spaces of functions on some ``spatial domain''
such that $V \hookrightarrow H$ with dense embedding.
Identifying $H$ with its dual, we obtain the Gelfand triple
$V \hookrightarrow H \simeq H' \hookrightarrow V'$.

For a.e.
$$
t \in I:=(0,T),
$$
let $a(t;\cdot,\cdot)$ denote a bilinear form on $V \times V$ such that for
any $\eta,\zeta \in V$, $t \mapsto a(t;\eta,\zeta)$ is measurable on $I$,
and such that for some $\varrho \in \R$, for
a.e. $t\in I$,
\begin{alignat}{3} \label{1}
|a(t;\eta,\zeta)|
& \lesssim  \|\eta\|_{V} \|\zeta\|_{V} \quad &&(\eta,\zeta \in V) \quad &&\text{({\em boundedness})},
\\ \label{2}
 a(t;\eta,\eta) +\varrho\langle \eta,\eta\rangle_H &\gtrsim \|\eta\|_{V}^2 \quad
&&(\eta \in {V}) \quad &&\text{({\em G{\aa}rding inequality})}.
\end{alignat}

With $A(t) \in \Lis({V},V')$ being defined by $ (A(t) \eta)(\zeta)=a(t;\eta,\zeta)$, given a forcing function $g$ and an initial value $u_0$, we are interested in solving the {\em parabolic initial value problem} to finding $u$ such that
\begin{equation} \label{11}
\left\{
\begin{array}{rl}
\frac{d u}{d t}(t) +A(t) u(t)&\!\!\!= g(t) \quad(t \in I),\\
u(0) &\!\!\!= u_0.
\end{array}
\right.
\end{equation}

In a simultaneous space-time variational formulation, the parabolic PDE reads as finding $u$ from a suitable space of functions of time and space such that
$$
(Bw)(v):=\int_I
\langle { \textstyle \frac{d w}{dt}}(t), v(t)\rangle +
a(t;w(t),v(t)) dt = \int_I
\langle g(t), v(t)\rangle =:g(v)
$$
for all $v$ from another suitable space of functions of time and space.
One  possibility to enforce the initial condition is by testing it against additional test functions. A proof of the following result can be found in \cite{247.15}, cf.
\cite[Ch.XVIII, \S3]{63} and \cite[Ch.~IV, \S26]{314.9} for slightly different statements.

\begin{theorem} \label{thm0} With $X:=L_2(I;{V}) \cap H^1(I;V')$, $Y:=L_2(I;{V})$,
under conditions \eqref{1} and \eqref{2} it holds that
$$
\left[\begin{array}{@{}c@{}} B \\ \gamma_0\end{array} \right]\in \Lis(X,Y' \times H),
$$
where for $t \in \bar{I}$, $\gamma_t\colon u \mapsto u(t,\cdot)$ denotes the trace map.
That is, assuming $g \in Y'$ and $u_0 \in H$, finding $u \in X$ such that
\be \label{x12}
\left[\begin{array}{@{}c@{}} B \\ \gamma_0\end{array} \right] u=\left[\begin{array}{@{}c@{}} g \\ u_0\end{array} \right]
\ee
is a well-posed simultaneous space-time variational formulation of \eqref{11}.
\end{theorem}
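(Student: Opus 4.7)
The plan is to establish invertibility in three steps: (i) verify continuity of the operator; (ii) reduce to the case where the G{\aa}rding inequality upgrades to genuine coercivity; and (iii) obtain existence, uniqueness, and an a priori bound in the coercive case via a Galerkin scheme in space.

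For (i), the bound on $B$ follows directly from \eqref{1} and the definitions of $X$ and $Y$: for $w \in X$ and $v \in Y$, the Cauchy--Schwarz inequality gives
$$
|(Bw)(v)| \leq \|\tfrac{dw}{dt}\|_{L_2(I;V')}\|v\|_Y + C\|w\|_{L_2(I;V)}\|v\|_Y \lesssim \|w\|_X\|v\|_Y,
$$
so $\|Bw\|_{Y'}\lesssim \|w\|_X$. The bound on $\gamma_0$ relies on the continuous embedding $X \hookrightarrow C(\bar{I}; H)$, itself a consequence of the integration-by-parts identity
$$
\|u(t)\|_H^2 - \|u(s)\|_H^2 = 2\int_s^t \langle \tfrac{du}{dt}(\tau), u(\tau)\rangle_{V'\times V}\,d\tau \qquad (u \in X,\ s,t \in \bar{I}).
$$
This identity will also be the key tool in step (iii).

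For (ii), the map $T_\varrho$ defined by $(T_\varrho u)(t) = e^{\varrho t}u(t)$ is a bounded linear automorphism of $X$ (and of $Y$), and $u \in X$ solves \eqref{11} if and only if $\tilde u := T_{-\varrho}u$ solves the analogous problem with $A(t)$ replaced by $A(t)+\varrho I$ (now coercive by \eqref{2}), initial data unchanged, and $g$ replaced by $T_{-\varrho}g$. It thus suffices to treat the coercive case. For (iii), I would then set up a Galerkin approximation in space using an increasing sequence of subspaces $V_n \subset V$ with dense union, and solve the resulting linear ODE system in time via Carath\'eodory's theorem. Coercivity, together with the integration-by-parts identity above and Gronwall's lemma, yields the uniform energy estimate
$$
\sup_{t \in \bar{I}}\|u_n(t)\|_H^2 + \|u_n\|_{L_2(I;V)}^2 \lesssim \|g\|_{Y'}^2 + \|u_0\|_H^2.
$$
Passing to a weak limit $u$ in $L_2(I;V)$ along a subsequence and using the equation $\tfrac{du}{dt} = g - A(\cdot)u$ in $Y'$ then upgrades the regularity to $u \in X$ with the same bound on $\|u\|_X$; uniqueness follows by applying the estimate to the homogeneous problem, which together produce the bounded inverse.

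The main obstacle will be the weak-limit step: one must verify that the limit actually lies in $X$ rather than merely in $L_2(I;V)$, that its distributional time derivative is realised in $L_2(I;V')$, and that $\gamma_0 u = u_0$ in $H$. These are the classical technicalities of parabolic Galerkin theory, for which the proof can simply invoke the references \cite{247.15}, \cite[Ch.~XVIII, \S3]{63}, and \cite[Ch.~IV, \S26]{314.9} already cited immediately before the statement.
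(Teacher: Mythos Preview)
The paper does not actually prove Theorem~\ref{thm0}: the sentence immediately preceding the statement reads ``A proof of the following result can be found in \cite{247.15}, cf.~\cite[Ch.~XVIII, \S3]{63} and \cite[Ch.~IV, \S26]{314.9}'', and no argument is given in the paper itself. Your sketch outlines precisely the classical Galerkin-in-space argument that those references contain (continuity, exponential shift to reduce to the coercive case, finite-dimensional approximation, uniform energy estimate, weak compactness, recovery of the time derivative and initial trace), and you correctly note at the end that one may simply invoke those citations. So your proposal is consistent with---indeed more detailed than---what the paper offers.
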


With $\tilde{u}(t):=u(t) e^{-\varrho t}$, \eqref{11} is equivalent to $
\frac{d \tilde{u}}{d t}(t) +(A(t)+\varrho \identity) \tilde{u}(t)= g(t)e^{-\varrho t}$  ($t \in I$),
$\tilde{u}(0) = u_0$. Since $((A(t)+\varrho \identity)\eta)(\eta) \gtrsim \|\eta\|_{V}^2$, w.l.o.g.~we assume that
\eqref{2} is valid for $\varrho=0$, i.e., $a(t;\cdot,\cdot)$ is \emph{coercive} uniformly for a.e.~$t \in I$.

For simplicity, additionally we assume that $a(t;\cdot,\cdot)$ is \emph{symmetric}, and define
$A=A' \in \Lis(Y,Y')$ by $(A w)(v)=\int_I (A(t) w(t))v(t) \,dt$.

Because $\left[\begin{array}{@{}cc@{}} A & 0 \\ 0 & \identity \end{array}\right] \in \Lis(Y\times H,Y'\times H)$, an equivalent formulation of \eqref{x12} as a self-adjoint saddle point equation reads as finding $(\mu,\sigma,u) \in Y\times H\times X$ (where $\mu=0=\sigma$) such that
\begin{align} \label{m0}
\left[\begin{array}{@{}ccc@{}} A & 0 & B\\ 0 & \identity & \gamma_0\\ B' & \gamma_0' & 0\end{array}\right]
\left[\begin{array}{@{}c@{}} \mu \\ \sigma \\ u \end{array}\right]&=
\left[\begin{array}{@{}c@{}} g \\ u_0 \\ 0 \end{array}\right],
\intertext{or equivalently}
 \label{m1}
\left[\begin{array}{@{}cc@{}} A & B\\ B' & -\gamma_0' \gamma_0\end{array}\right]
\left[\begin{array}{@{}c@{}} \mu  \\ u \end{array}\right]&=
\left[\begin{array}{@{}c@{}} g \\ -\gamma_0' u_0 \end{array}\right],
\intertext{or} \label{m2}
\underbrace{(B' A^{-1} B+\gamma_0'\gamma_0)}_{S:=}u&=\underbrace{B' A^{-1}g+\gamma_0' u_0}_{f:=}.
\end{align}
\new{By employing $H \simeq H'$, here $\gamma_0' \in \cL(H,X')$ is defined by $(\gamma_0' v)(w):=\langle v,\gamma_0 w\rangle_H$.}

We equip $Y$ and $X$ with \emph{`energy'-norms}
$$
\|\cdot\|_Y^2:=(A\cdot)(\cdot),\quad \|\cdot\|_X^2:=\|\cdot\|_Y^2+\|\partial_t \cdot\|_{Y'}^2+\|\gamma_T \cdot\|_H^2,
$$
which are equivalent to the canonical norms on $Y$ and $X$. Notice that \eqref{m0}--\eqref{m2} are the Euler--Langrange equations that result from the
\new{least squares problem \eqref{I2}.}

\begin{lemma} \label{100} We have $\|\cdot\|_X^2=(S\cdot)(\cdot)$.
\end{lemma}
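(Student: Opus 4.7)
The plan is to unfold the definition of $S$ and compute $(Sw)(w)$ for $w \in X$ directly, matching it term by term with the definition of $\|w\|_X^2$.

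First, I would exploit that the energy norm $\|\cdot\|_Y^2 = (A\cdot)(\cdot)$ makes $A$ the Riesz map $Y \to Y'$, so $A^{-1}$ is the dual Riesz map and the induced dual norm satisfies $\|f\|_{Y'}^2 = f(A^{-1}f)$ for $f \in Y'$. Applied with $f = Bw$ and using the definition of the transpose, this gives $(B'A^{-1}Bw)(w) = (Bw)(A^{-1}Bw) = \|Bw\|_{Y'}^2$. Combined with the trivial identity $(\gamma_0'\gamma_0 w)(w) = \|\gamma_0 w\|_H^2$, this reduces the task to showing
$$\|Bw\|_{Y'}^2 + \|\gamma_0 w\|_H^2 = \|\partial_t w\|_{Y'}^2 + \|w\|_Y^2 + \|\gamma_T w\|_H^2.$$

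Second, I would decompose $Bw = \partial_t w + Aw$ as elements of $Y'$, which is legitimate since $\partial_t w \in L_2(I;V') \hookrightarrow Y'$ for $w \in X$, and since $(Bw)(v) = \int_I \langle \partial_t w,v\rangle + a(t;w,v)\,dt$ by definition. Expanding the quadratic $\|Bw\|_{Y'}^2 = (Bw)(A^{-1}Bw)$ bilinearly, using self-adjointness of $A$ to identify the two cross terms $(\partial_t w)(A^{-1}Aw) = (Aw)(A^{-1}\partial_t w) = (\partial_t w)(w)$, and noting $(Aw)(A^{-1}Aw) = (Aw)(w) = \|w\|_Y^2$, yields
$$\|Bw\|_{Y'}^2 = \|\partial_t w\|_{Y'}^2 + \|w\|_Y^2 + 2\int_I \langle \partial_t w(t), w(t)\rangle\,dt.$$

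The one non-routine ingredient, and what I view as the main obstacle, is the integration-by-parts identity
$$2\int_I \langle \partial_t w(t), w(t)\rangle\,dt = \|\gamma_T w\|_H^2 - \|\gamma_0 w\|_H^2 \qquad (w \in X),$$
which is classical (Lions--Magenes) but relies on the density of smooth $V$-valued functions in $X$ together with the continuity of the trace $\gamma_t \colon X \to H$ for $t \in \bar I$; both facts are already implicit in Theorem~\ref{thm0}. Substituting this identity into the previous display and adding $\|\gamma_0 w\|_H^2$ causes the $\|\gamma_0 w\|_H^2$ terms to cancel, leaving precisely $\|w\|_X^2$ on the right-hand side and completing the proof.
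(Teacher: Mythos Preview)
Your proof is correct and takes a genuinely different route from the paper's. Both arguments pass through the identity $(Sw)(w)=\|Bw\|_{Y'}^2+\|\gamma_0 w\|_H^2$ and the identity $\|w\|_X^2=\|Bw\|_{Y'}^2+\|\gamma_0 w\|_H^2$, but the paper obtains them by citation: the first as a Schur-complement formula from~\cite{174}, the second from~\cite{70.95}. You instead compute both directly---the first by recognizing $A^{-1}$ as the Riesz map of $(Y',\|\cdot\|_{Y'})$, and the second by expanding $\|\partial_t w + Aw\|_{Y'}^2$ and invoking the Lions--Magenes integration-by-parts identity $2\int_I\langle\partial_t w,w\rangle\,dt=\|\gamma_T w\|_H^2-\|\gamma_0 w\|_H^2$. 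Your argument is more self-contained and makes explicit where the symmetry assumption on $a(t;\cdot,\cdot)$ enters (in identifying the cross terms), while the paper's version is terser but defers the actual work to the referenced results.
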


\begin{proof}
It holds that
\begin{align*}
\|w\|_X^2&=\sup_{0 \neq v_1 \in Y} \frac{(B w)(v_1)}{\|v_1\|_Y^2}+\|\gamma_0 w\|_H^2
=\sup_{0 \neq (v_1,v_2) \in Y \times H} \frac{((B w)(v_1)+\langle \gamma_0 w,v_2\rangle_H)^2}{\|v_1\|_Y^2+\|v_2\|_H^2}\\
&=(Sw)(w),
\end{align*}
  \new{where the first equality is found} in e.g.~\cite[Thm.~2.1]{70.95}, and, \new{seeing} that
$S$ is the Schur complement of the operator in \eqref{m0}, the last one in e.g.~\cite[Lem.~2.2]{174}.
\end{proof}

\section{Discretizations} \label{S3}
\subsection{Galerkin discretization of the Schur complement equation}
Let $(X^\delta)_{\delta \in \Delta}$ be a collection of closed, e.g, finite dimensional, subspaces of $X$, so equipped with $\|\cdot\|_X$.
We will specify such a family in Sect.~\ref{Srealization}-\ref{Smapping}.
We define a \emph{partial order} on $\Delta$ by
$$
\delta \preceq \tilde{\delta} \Longleftrightarrow X^\delta \subseteq X^{\tilde{\delta}}.
$$
For $\delta \in \Delta$, let $u_\delta \in X^\delta$ denote the {\em Galerkin approximation to the solution $u$ of \eqref{m2}}, i.e., the solution of
\be \label{50}
(Su_\delta)(v)=f(v) \quad (v \in X^\delta),
\ee
being the best approximation to $u$ from $X^\delta$ w.r.t.~$\|\cdot\|_X$.
\new{This $u_\delta$ is the solution of \eqref{I2} with the minimization being performed over $X^\delta$ instead of over $X$.}

For proving convergence of an adaptive solution routine, as well as for a posteriori error estimation, we shall make the following assumption.
\begin{assumption}[Saturation] \label{saturation}
There exists a collection of subspaces $({}^{\delta\!}G \times {}^{\delta\!}U_0)_{\delta \in \Delta} \subseteq Y' \times H$,
a mapping $\underline{\cdot}\colon \Delta \rightarrow \Delta\colon \delta \mapsto \udelta$ where $\udelta\succeq \delta$,
and
some fixed constant $\zeta<1$ such that for all $\delta \in \Delta$, assuming that $(g,u_0) \in {}^{\delta\!}G \times {}^{\delta\!}U_0$,
\be \label{reduction}
\|u-u_\udelta\|_X \leq \zeta \|u-u_\delta\|_X.
\ee
\end{assumption}

\begin{remark} \label{data-oscillation}
  Notice that above assumption cannot be valid without restrictions on the right-hand side $f =B' A^{-1}g+\gamma_0' u_0\in X'$.
  Indeed \new{for} any $X^\delta \subset X^{\udelta} \subsetneq X$, consider a non-zero $f \in X'$ that vanishes on $X^{\udelta}$. Then $u_\delta=u_\udelta=0 \neq u$, \new{violating~\eqref{reduction}}.
\end{remark}

\begin{center}
  \parbox{11cm}{\emph{For \new{now}, we will operate under the restrictive assumption that whenever we apply \eqref{reduction} (visible by the appearance of the constant $\zeta$) we simply assume that
$(g,u_0) \in {}^{\delta\!}G \times {}^{\delta\!}U_0$. Later, in Sect.~\ref{Soscillation}, we will remove this assumption.}}
\end{center}
\smallskip

The discretized problem from \eqref{50} only serves theoretical purposes. Indeed, since the Schur complement operator $S$ contains the inverse of $A$, there is no way to determine $u_\delta$ exactly. The reason to introduce \eqref{50} is that $S$ is an \emph{elliptic} operator, so that for $\delta \preceq \tilde{\delta}$ we can make use of $\|u-u_{\tilde{\delta}}\|_X^2=\|u-u_{\delta}\|_X^2-\|u_{\tilde{\delta}}-u_{\delta}\|_X^2$, being a crucial tool for proving convergence of adaptive algorithms.

\subsection{Uniformly stable Galerkin discretization of the saddle-point formulation}
Our numerical approximations will be based on Galerkin discretizations of the saddle-point formulation \eqref{m1}.
Let $(Y^\delta)_{\delta \in \Delta}$ be a collection of closed subspaces of $Y$, so equipped with $\|\cdot\|_Y$,  such that
\be \label{x20}
X^\delta \subseteq Y^\delta \quad (\delta \in \Delta),
\ee
and
\be \label{x21}
1 \geq \gamma_\Delta:=\inf_{\delta \in \Delta}\inf_{0 \neq w \in X^\delta}\sup_{0\neq v \in Y^\delta} \frac{(\partial_t w)(v)}{\|\partial_t w\|_{Y'}\|v\|_Y} >0.
\ee
Notice that $1-\gamma_\Delta$ can be made arbitrarily small by selecting, for each $\delta\in \Delta$, $Y^\delta$ sufficiently large in relation to $X^\delta$.

For $\delta, \hat{\delta} \in \Delta$ with $Y^{\hat{\delta}} \supseteq Y^\delta$, and
$E_Y^{\hat{\delta}}$, $E_X^\delta$ denoting the embeddings $Y^{\hat{\delta}} \rightarrow Y$, $X^\delta \rightarrow X$,
let $(\mu^{\hat{\delta} \delta},u^{\hat{\delta} \delta}) \in Y^{\hat{\delta}} \times X^\delta$ be the solution of
\be \label{m8}
\left[\begin{array}{@{}ccc@{}}{E_Y^{\hat{\delta}}}' A E_Y^{\hat{\delta}}& {E_Y^{\hat{\delta}}}' B E^\delta_X\\ {E^\delta_X}' B' E_Y^{\hat{\delta}}& -{E^\delta_X}' \gamma_0' \gamma_0 E^\delta_X \end{array}\right]
\left[\begin{array}{@{}c@{}} \mu^{\hat{\delta} \delta} \\ u^{\hat{\delta} \delta} \end{array}\right]=
\left[\begin{array}{@{}c@{}} {E^{\hat{\delta}}_Y}' g \\ -{E^\delta_X}' \gamma_0' u_0 \end{array}\right],
\ee
or, equivalently,
\be \label{m9}
\underbrace{{E^\delta_X}'(B' E^{\hat{\delta}}_Y({E^{\hat{\delta}}_Y}' A E^{\hat{\delta}}_Y)^{-1} {E^{\hat{\delta}}_Y}' B+\gamma_0'\gamma_0)E^\delta_X}_{S^{\hat{\delta} \delta}:=}u^{\hat{\delta} \delta}=\underbrace{{E^\delta_X}' (B' E^{\hat{\delta}}_Y({E^{\hat{\delta}}_Y}' A E^{\hat{\delta}}_Y)^{-1} {E^{\hat{\delta}}_Y}' g+\gamma_0' u_0)}_{f^{\hat{\delta} \delta}:=}.
\ee
Below we \new{show} that \eqref{m8}--\eqref{m9} are uniquely solvable.
\new{In} `operator language', \eqref{m8} is the Galerkin discretization of \eqref{m1} on the closed subspace $Y^{\hat{\delta}} \times X^\delta \subseteq Y \times X$.
Unless $Y^{\hat{\delta}}=Y$, it holds that $S^{\hat{\delta} \delta} \neq {E_X^\delta}' S E_X^\delta$ and $f^{\hat{\delta} \delta} \neq {E_X^\delta}'f$,
so generally $u^{\hat{\delta} \delta} \neq u_\delta$.

As we will see, however, for $Y^\delta$, and thus $Y^{\hat{\delta}}$, `large' in relation to $X^\delta$, $u^{\hat{\delta} \delta}$ will be `close' to $u_\delta$. This will allow us to show that ($r$-linear) convergence of a sequence of Galerkin solutions $u_\delta$ of \eqref{50} implies \new{that} of the corresponding sequence $u^{\hat{\delta} \delta}$.

We equip $X^\delta$ with a family of `energy' norms
$$
\|w\|^2_{X^{\hat{\delta}\delta}}:=\|w\|^2_{Y}+\sup_{0 \neq v \in Y^{\hat{\delta}}} \frac{(\partial_t w)(v)^2}{\|v\|_Y^2}+\new{\|\gamma_T w\|^2_H}.
$$
By definition of $\gamma_\Delta$ it holds that
\be \label{m19}
\gamma_\Delta\|\cdot\|_{X} \leq \|\cdot\|_{X^{\hat{\delta}\delta}} \leq \|\cdot\|_{X} \quad\text{on } X^\delta.
\ee

\new{Similar} to Lemma~\ref{100} we have the following result.

\begin{lemma}[{\cite[\new{Lem.~3.4}]{249.99}}] \label{101}
Thanks to \eqref{x20} (and $Y^\delta \subseteq Y^{\hat{\delta}}$), for $w \in X^\delta$, \new{we have}
$$
\|w\|_{X^{\hat{\delta}\delta}}^2=(S^{\hat{\delta}\delta}w)(w)=\sup_{0 \neq v \in Y^{\hat{\delta}} } \frac{(B w)(v)^2}{\|v\|_Y^2}+\|\gamma_0 w\|_H^2.
$$
\end{lemma}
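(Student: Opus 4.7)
The plan is to first establish the middle equality $(S^{\hat\delta\delta}w)(w)=\sup_{0\neq v\in Y^{\hat\delta}}(Bw)(v)^2/\|v\|_Y^2+\|\gamma_0 w\|_H^2$ by identifying the discrete dual norm with a Riesz representation, and then to derive the first equality by decomposing $B=\partial_t+A$ and using an integration-by-parts identity.

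For the middle equality, I would note that since $a(t;\cdot,\cdot)$ is symmetric and coercive, ${E^{\hat\delta}_Y}'AE^{\hat\delta}_Y\in\Lis(Y^{\hat\delta},{Y^{\hat\delta}}')$ is the Riesz isomorphism of $Y^{\hat\delta}$ equipped with $(\cdot,\cdot)_Y:=(A\cdot)(\cdot)$. For any $\psi\in{Y^{\hat\delta}}'$, the Riesz representation theorem and Cauchy--Schwarz give
$$
\sup_{0\neq v\in Y^{\hat\delta}}\frac{\psi(v)^2}{\|v\|_Y^2}=\psi\bigl(({E^{\hat\delta}_Y}'AE^{\hat\delta}_Y)^{-1}\psi\bigr).
$$
Applying this with $\psi={E^{\hat\delta}_Y}'Bw$ for $w\in X^\delta$, and adding $\|\gamma_0 w\|_H^2=(\gamma_0'\gamma_0 w)(w)$, yields precisely $(S^{\hat\delta\delta}w)(w)$ by the definition of $S^{\hat\delta\delta}$ in~\eqref{m9}. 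In particular this identity shows that $S^{\hat\delta\delta}$ is SPD, so~\eqref{m8}--\eqref{m9} is uniquely solvable, which is the claim promised just after \eqref{m9}.

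For the outer equality, I would invoke the inclusion $X^\delta\subseteq Y^\delta\subseteq Y^{\hat\delta}$ from \eqref{x20} to conclude $w\in Y^{\hat\delta}$, so that $Aw\in Y'$ may be represented via the $Y$-inner product on $Y^{\hat\delta}$: for every $v\in Y^{\hat\delta}$, $(Aw)(v)=(w,v)_Y$. Writing $(Bw)(v)=(\partial_t w)(v)+(w,v)_Y$ and letting $\phi^*\in Y^{\hat\delta}$ denote the Riesz representer in $Y^{\hat\delta}$ of $\partial_t w|_{Y^{\hat\delta}}$, we obtain $(Bw)(v)=(\phi^*+w,v)_Y$, so
$$
\sup_{0\neq v\in Y^{\hat\delta}}\frac{(Bw)(v)^2}{\|v\|_Y^2}=\|\phi^*+w\|_Y^2=\|\phi^*\|_Y^2+2(\phi^*,w)_Y+\|w\|_Y^2.
$$
Here $\|\phi^*\|_Y^2=\sup_{v\in Y^{\hat\delta}}(\partial_t w)(v)^2/\|v\|_Y^2$, and $(\phi^*,w)_Y=(\partial_t w)(w)$ since $w\in Y^{\hat\delta}$.

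The remaining identification is the integration-by-parts formula $2(\partial_t w)(w)=\|\gamma_T w\|_H^2-\|\gamma_0 w\|_H^2$, which is the standard fact that for $w\in X=L_2(I;V)\cap H^1(I;V')$ the map $t\mapsto\|w(t)\|_H^2$ is absolutely continuous with derivative $2\langle\partial_t w(t),w(t)\rangle_{V'\times V}$; I would cite this from the same reference (e.g.\ \cite{247.15} or \cite[Ch.~XVIII, \S3]{63}) that underlies Theorem~\ref{thm0}. Combining yields
$$
\sup_{0\neq v\in Y^{\hat\delta}}\frac{(Bw)(v)^2}{\|v\|_Y^2}+\|\gamma_0 w\|_H^2=\sup_{0\neq v\in Y^{\hat\delta}}\frac{(\partial_t w)(v)^2}{\|v\|_Y^2}+\|\gamma_T w\|_H^2+\|w\|_Y^2=\|w\|_{X^{\hat\delta\delta}}^2,
$$
finishing the chain. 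No step is truly an obstacle; the only subtle point is making sure the Riesz identification $(Aw)(v)=(w,v)_Y$ is legitimate, which requires $w\in Y^{\hat\delta}$ and hence uses \eqref{x20}, exactly as flagged in the statement.
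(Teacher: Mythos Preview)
Your proof is correct. The paper does not give its own proof but cites \cite[Lem.~3.4]{249.99}, noting it is ``similar'' to Lemma~\ref{100}, whose proof in turn defers the first equality to \cite[Thm.~2.1]{70.95} and the Schur-complement identification to \cite[Lem.~2.2]{174}. Your argument is an explicit, self-contained unpacking of exactly these two ingredients in the discrete setting: the Riesz-representation computation is precisely what underlies the Schur-complement lemma, and your integration-by-parts expansion of $\sup_{v\in Y^{\hat\delta}}(Bw)(v)^2/\|v\|_Y^2$ via the splitting $B=\partial_t+A$ is the standard proof of the cited identity from \cite{70.95}. The one place where \eqref{x20} is genuinely needed---that $w\in Y^{\hat\delta}$ so that both the supremum $\|\phi^*+w\|_Y^2$ is attained within $Y^{\hat\delta}$ and $(\phi^*,w)_Y=(\partial_t w)(w)$ holds---you have identified correctly.
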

\noindent By using additionally \eqref{x21} this result shows that $(S^{\hat{\delta}\delta}\cdot)(\cdot)$ is coercive on $X^\delta \times X^\delta$ so that \eqref{m9}, and thus \eqref{m8}, has a unique solution.

Moreover, we have the following result.

\begin{theorem}[{\cite[Thm.~3.7]{249.99}}] Thanks to \eqref{x20} (and $Y^\delta \subseteq Y^{\hat{\delta}}$) and \eqref{x21}, it holds that
\be \label{m6}
\|u-u_\delta\|_X \leq \|u-u^{\hat{\delta}\delta}\|_X \leq \gamma_\Delta^{-1} \|u-u_\delta\|_X.
\ee
\end{theorem}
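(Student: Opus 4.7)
The left inequality is the easy one. By Lemma~\ref{100} and \eqref{50}, $u_\delta$ is the $S$-orthogonal projection of $u$ onto $X^\delta$, hence the best $\|\cdot\|_X$-approximation of $u$ from $X^\delta$; since $u^{\hat{\delta}\delta} \in X^\delta$, the bound $\|u-u_\delta\|_X \le \|u-u^{\hat{\delta}\delta}\|_X$ is immediate.

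For the right inequality my plan is to realize $u \mapsto u^{\hat{\delta}\delta}$ as a bounded linear projection $P\colon X \to X^\delta$ and invoke Kato's identity $\|I-P\|_{\cL(X,X)} = \|P\|_{\cL(X,X)}$, valid for any non-trivial bounded projection on a Hilbert space. First I extend $S^{\hat{\delta}\delta}$ to $X$ by setting
$$S^{\hat{\delta}} := B' E_Y^{\hat{\delta}}({E_Y^{\hat{\delta}}}' A E_Y^{\hat{\delta}})^{-1} {E_Y^{\hat{\delta}}}' B + \gamma_0'\gamma_0 \in \cL(X,X'),$$
so that $S^{\hat{\delta}\delta} = {E^\delta_X}' S^{\hat{\delta}} E^\delta_X$. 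This $S^{\hat{\delta}}$ is symmetric positive semidefinite, its induced semi-norm extends the formula in Lemma~\ref{101}, and coincides with $\|\cdot\|_{X^{\hat{\delta}\delta}}$ on $X^\delta$. Because $Bu = g$ and $\gamma_0 u = u_0$, one checks ${E^\delta_X}' S^{\hat{\delta}} u = f^{\hat{\delta}\delta}$. Define $P$ by the Galerkin equation $S^{\hat{\delta}\delta} Pw = {E^\delta_X}' S^{\hat{\delta}} w$; then $P^2=P$, $Pw = w$ for $w \in X^\delta$, and in particular $Pu = u^{\hat{\delta}\delta}$.

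Next I bound $\|P\|_{\cL(X,X)} \le \gamma_\Delta^{-1}$. Cauchy--Schwarz for the semi-inner product $(S^{\hat{\delta}}\cdot)(\cdot)$, together with the Galerkin identity $(S^{\hat{\delta}} Pw)(Pw) = (S^{\hat{\delta}} w)(Pw)$, gives
$$\|Pw\|_{X^{\hat{\delta}\delta}}^2 = (S^{\hat{\delta}}w)(Pw) \le \sqrt{(S^{\hat{\delta}}w)(w)}\,\|Pw\|_{X^{\hat{\delta}\delta}},$$
so $\|Pw\|_{X^{\hat{\delta}\delta}} \le \sqrt{(S^{\hat{\delta}}w)(w)} \le \|w\|_X$ (the last step from the trivial bound $\sqrt{(S^{\hat{\delta}}\cdot)(\cdot)} \le \|\cdot\|_X$ on all of $X$). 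Combining with \eqref{m19} yields $\gamma_\Delta\|Pw\|_X \le \|Pw\|_{X^{\hat{\delta}\delta}} \le \|w\|_X$. Kato's identity then gives $\|I-P\|_{\cL(X,X)} \le \gamma_\Delta^{-1}$, and since $Pu_\delta = u_\delta$,
$$\|u-u^{\hat{\delta}\delta}\|_X = \|(I-P)u\|_X = \|(I-P)(u-u_\delta)\|_X \le \gamma_\Delta^{-1}\|u-u_\delta\|_X.$$

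The main subtlety is invoking Kato's identity in the last paragraph. The naive route --- Pythagoras based on the $S$-orthogonality $u-u_\delta \perp_S X^\delta$, combined with the contraction $\|u^{\hat{\delta}\delta} - u_\delta\|_{X^{\hat{\delta}\delta}} \le \|u-u_\delta\|_{X^{\hat{\delta}\delta}}$ --- yields only the weaker constant $\sqrt{1+\gamma_\Delta^{-2}}$. To recover the sharp $\gamma_\Delta^{-1}$ one must exploit that $P$ is a non-orthogonal projection on the Hilbert space $(X,\|\cdot\|_X)$ for which $\|P\|=\|I-P\|$; this is the nontrivial ingredient that lets the inf--sup constant $\gamma_\Delta$ pass directly into the $X$-norm error bound.
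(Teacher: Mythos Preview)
The paper does not include its own proof of this theorem; it merely cites \cite[Thm.~3.7]{249.99}. Your argument is correct and self-contained: the left inequality is indeed immediate from $u_\delta$ being the $\|\cdot\|_X$-best approximation, and for the right inequality your route via the extended operator $S^{\hat{\delta}}$, the Galerkin contraction $\|Pw\|_{X^{\hat{\delta}\delta}}\le\sqrt{(S^{\hat{\delta}}w)(w)}\le\|w\|_X$, the norm equivalence \eqref{m19}, and Kato's identity $\|I-P\|_{\cL(X,X)}=\|P\|_{\cL(X,X)}$ for nontrivial Hilbert-space projections is clean and yields the sharp constant $\gamma_\Delta^{-1}$. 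This is precisely the Xu--Zikatanov style quasi-optimality argument one expects in this setting, and your closing remark correctly identifies why the naive Pythagoras route loses a factor.
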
\comment{Remark deleted}

\subsection{Modified discretized saddle-point} \label{Smod}
In view of obtaining an efficient implementation, in the definition of $(\mu^{\hat{\delta} \delta},u^{\hat{\delta} \delta})$ in \eqref{m8}, and so in that of
$S^{\hat{\delta} \delta}$ and $f^{\hat{\delta} \delta}$ in \eqref{m9}, we replace $({E_Y^{\hat{\delta}}}' A E_Y^{\hat{\delta}})^{-1}$
by some $K_Y^{\hat{\delta}}={K_Y^{\hat{\delta}}}' \in \Lis({Y^{\hat{\delta}}}',Y^{\hat{\delta}})$
for which both
\be \label{kappadelta}
\frac{((K_Y^{\hat{\delta}})^{-1} v)(v)}{(A v)(v)} \in [\kappa_\Delta^{-1},\kappa_\Delta] \quad(\delta \in \Delta,\, v \in Y^{\hat{\delta}})
\ee
\new{for some constant $\kappa_\Delta \geq 1$}
(i.e.~$K_Y^{\hat{\delta}}$ is an optimal (self-adjoint and coercive) \emph{preconditioner} for ${E_Y^{\hat{\delta}}}' A E_Y^{\hat{\delta}}$), and which can be applied at linear cost.
The resulting system \eqref{m9} is now amenable to the application of the (preconditioned) conjugate \new{gradient} iteration.
Despite this modification, we keep using the old notations for $\mu^{\hat{\delta} \delta}$, $u^{\hat{\delta} \delta}$,
$S^{\hat{\delta} \delta}$, \mbox{$\|\cdot\|_{X^{\hat{\delta}\delta}}:=(S^{\hat{\delta}\delta}\cdot)(\cdot)^{\frac12}$}, and $f^{\hat{\delta} \delta}$.

As shown in \cite[Remark~3.8]{249.99}, instead of \eqref{m6} now it holds that
\be  \label{m6b}
\|u-u_\delta\|_X \leq \|u-u^{\hat{\delta}\delta}\|_X \leq \frac{\kappa_\Delta}{\gamma_\Delta} \|u-u_\delta\|_X,
\ee
whereas one deduces that \eqref{m19} now should be read as
\be \label{m19b}
\frac{\gamma_\Delta}{\sqrt{\kappa_\Delta}} \|\cdot\|_{X} \leq \|\cdot\|_{X^{\hat{\delta}\delta}} \leq \sqrt{\kappa_\Delta} \|\cdot\|_{X} \quad\text{on } X^\delta.
\ee
For our forthcoming analysis, we will need $\frac{\kappa_\Delta}{\gamma_\Delta}-1$ to be sufficiently small.

\begin{remark} \label{x14}
Later, in the proof of Proposition~\ref{alternative}, temporarily we will consider the system \eqref{m8} with $\hat{\delta}=\delta$ (i.e.~$Y^{\hat{\delta}}=Y^\delta$), but with $X^\delta$ replaced by $X$, and, as we do in the current subsection, ${E_Y^\delta}' A E_Y^\delta$ replaced by $(K_Y^{\delta})^{-1}$. The resulting Schur operator $B' E_Y^\delta K_Y^\delta {E_Y^\delta}' B +\gamma_0' \gamma_0$ will be denoted as $S^{\delta \infty}$.

  \new{Note that} the exact solution solves $S^{\delta \infty} u\!=\!B' E_Y^\delta K_Y^\delta {E_Y^\delta}'\! g \!+ \!\gamma_0' u_0$. \new{From} $S^{\delta \delta}\!=\!{E_X^\delta\!}'\! S^{\delta \infty} E_X^\delta$ we \new{find} the Galerkin orthogonality $(S^{\delta \infty}(u\!-\!u^{\delta \delta}))(X^\delta)\!=\!0$.
It holds that
$$
\|\cdot\|_{X^{\delta \infty}}:=(S^{\delta \infty}\cdot)(\cdot)^{\frac12}=\sqrt{({E_Y^\delta}' B\cdot)(K_Y^\delta {E_Y^\delta}' B \cdot)+\|\gamma_0\cdot\|_H^2}
$$
\comment{formula simplified} is only a \emph{semi}-norm on $X$, which is equal to $\|\cdot\|_{X^{\delta \delta}}$ on $X^\delta$, and
\be \label{x24}
\|\cdot\|_{X^{\delta \infty}} \leq \sqrt{\kappa_{\Delta}} \|\cdot\|_X \quad \text{on } X.
\ee
\end{remark}

\section{Convergent adaptive solution method}\label{sec:adaptive}
\subsection{Preliminaries}
For $\delta \in \Delta$, we consider the modified discretized saddle point problem
(i.e.~\eqref{m9} with $({E_Y^{\hat{\delta}}}' A E_Y^{\hat{\delta}})^{-1}$ replaced by $K_Y^{\hat{\delta}}$) taking $\hat{\delta}:=\udelta$ from Assumption~\ref{saturation}.
So for a given `trial space' $X^\delta$, we employ $Y^{\udelta}$ as `test space', which is known to be sufficiently large to give stability even when employed with trial space $X^{\udelta} \supsetneq X^\delta$.
We will use this room to (adaptively) expand $X^\delta$ to some $X^{\tilde{\delta}} \subset X^{\udelta}$ while keeping $Y^{\udelta}$ fixed. Then in a second step we adapt the test space to the new trial space, i.e., replace $Y^{\udelta}$ by $Y^{\ubar{\tilde{\delta}}}$. By doing so
will construct a sequence $(\delta_i) \subseteq \Delta$ with $\delta_i \preceq \delta_{i+1}$ such that $(u^{\ubar{{\delta_i}}\delta_i})_i$ converges $r$-linearly to $u$.

As a first step, in the next lemma it is shown that if one constructs from $w \in X^\delta$ a $v \in X^\udelta$ that is closer to the best approximation $u_\udelta$ to $u$ from $X^\udelta$, then, thanks to Assumption~\ref{saturation},
$v$ is also closer to $u$.

\begin{lemma} \label{m4} Let $w \in X^\delta$, $v \in X^\udelta$ be such that for some $\rho \leq 1$,
$$
\|u_\udelta-v\|_X \leq \rho \|u_\udelta-w\|_X.
$$
Then
$$
\|u-v\|_X \leq \sqrt{\zeta^2+\rho^2(1-\zeta^2)}\,\|u-w\|_X.
$$
\end{lemma}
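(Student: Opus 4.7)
The proof plan hinges on the fact that $\|\cdot\|_X^2 = (S\cdot)(\cdot)$ (Lemma~\ref{100}) is induced by an inner product, so $u_\delta$ and $u_\udelta$ are genuine orthogonal projections of $u$ onto $X^\delta$ and $X^\udelta$ respectively, with respect to the $S$-inner product. I would therefore set up the proof around two applications of Pythagoras together with the saturation assumption.

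First, since $u_\udelta$ is the $\|\cdot\|_X$-best approximation to $u$ from $X^\udelta$, for any $z \in X^\udelta$ the difference $u - u_\udelta$ is $S$-orthogonal to $u_\udelta - z$, so
\begin{equation*}
  \|u - z\|_X^2 \;=\; \|u - u_\udelta\|_X^2 + \|u_\udelta - z\|_X^2.
\end{equation*}
I would apply this identity twice: once with $z = v \in X^\udelta$ and once with $z = w \in X^\delta \subseteq X^\udelta$. The first gives
$\|u - v\|_X^2 = \|u - u_\udelta\|_X^2 + \|u_\udelta - v\|_X^2$, which by the hypothesis on $v$ is bounded by $\|u - u_\udelta\|_X^2 + \rho^2 \|u_\udelta - w\|_X^2$. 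The second rewrites $\|u_\udelta - w\|_X^2 = \|u - w\|_X^2 - \|u - u_\udelta\|_X^2$. Substituting gives
\begin{equation*}
  \|u - v\|_X^2 \;\leq\; (1-\rho^2)\|u - u_\udelta\|_X^2 + \rho^2 \|u - w\|_X^2.
\end{equation*}

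Next I would invoke the saturation assumption (Assumption~\ref{saturation}) together with the fact that $u_\delta$ is the best approximation to $u$ from $X^\delta$ (and $w \in X^\delta$): this yields $\|u - u_\udelta\|_X \leq \zeta \|u - u_\delta\|_X \leq \zeta \|u - w\|_X$. Since $1 - \rho^2 \geq 0$ by the assumption $\rho \leq 1$, I can plug this into the previous inequality to get
\begin{equation*}
  \|u - v\|_X^2 \;\leq\; \bigl(\zeta^2 + \rho^2(1-\zeta^2)\bigr)\|u - w\|_X^2,
\end{equation*}
and taking square roots finishes the proof.

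There is no real obstacle here; the only thing to be careful about is justifying the Pythagoras identity, which requires that the norm $\|\cdot\|_X$ be induced by the symmetric, coercive bilinear form $(S\cdot)(\cdot)$ — precisely the content of Lemma~\ref{100}. The sign condition $\rho \leq 1$ is used exactly at the point where one replaces $\|u - u_\udelta\|_X$ by its saturation bound, ensuring that the coefficient $1 - \rho^2$ multiplying the term being enlarged is nonnegative.
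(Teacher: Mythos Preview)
Your proof is correct and follows essentially the same route as the paper: two applications of the Pythagorean identity from $u-u_\udelta \perp_X X^\udelta$, the hypothesis on $v$, and then the saturation bound $\|u-u_\udelta\|_X \leq \zeta\|u-u_\delta\|_X \leq \zeta\|u-w\|_X$ using $1-\rho^2 \geq 0$. The only difference is cosmetic---you spell out the role of Lemma~\ref{100} and the sign condition on $\rho$, which the paper leaves implicit.
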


\begin{proof} Using $u-u_\udelta \perp_X X^\udelta$ twice, we obtain
\begin{align*}
\|u-v\|_X^2& =\|u-u_\udelta\|_X^2+\|u_\udelta-v\|_X^2\\
&\leq \|u-u_\udelta\|_X^2+\rho^2 \|u_\udelta-w\|_X^2\\
&=\|u-u_\udelta\|_X^2+\rho^2 (\|u-w\|_X^2-\|u-u_\udelta\|_X^2)\\
&=(1-\rho^2)\|u-u_\udelta\|_X^2+\rho^2\|u-w\|_X^2\\
&\leq (\zeta^2(1-\rho^2)+\rho^2)\|u-w\|_X^2,
\end{align*}
  where we used Assumption~\ref{saturation} and $\|u-u_\delta\|_X \leq \|u-w\|_X$.
\end{proof}

Notice that $u^{\udelta \delta}$ is the Galerkin approximation
from $X^\delta$ to the solution $u^{\udelta \udelta} \in X^\udelta$ of the system $S^{\udelta \udelta} u^{\udelta \udelta}=f^{\udelta \udelta}$, i.e., it is its best approximation from $X^\delta$ w.r.t.~$\|\cdot\|_{X^{\udelta \udelta}}$.
In the next proposition it is shown that an improved Galerkin approximation from an intermediate space $X^\udelta \supseteq X^{\tilde{\delta}} \supseteq X^\delta$, i.e., the function $u^{\udelta \tilde{\delta}}$, is, for $\frac{\kappa_\Delta}{\gamma_\Delta}-1$ sufficiently small, also an improved approximation to $u$, \new{and that} this holds true also for
$u^{\ubar{\tilde{\delta}} \tilde{\delta}}$. The latter function will be the successor of $u^{\udelta \delta}$ in our converging sequence.

\begin{proposition}\label{m5}
Let $\delta \preceq \tilde{\delta} \preceq \udelta$ be such that
\be \label{m7}
\|u^{\udelta \udelta}-u^{\udelta \tilde{\delta}}\|_{X^{\udelta \udelta}} \leq \rho\|u^{\udelta \udelta}-u^{\udelta \delta}\|_{X^{\udelta \udelta}}.
\ee
Then
it holds that
$$
\|u-u^{\ubar{\tilde{\delta}} \tilde{\delta}}\|_X \leq \underbrace{\frac{\kappa_\Delta}{\gamma_\Delta}  \sqrt{\zeta^2+\hat{\rho}^2(1-\zeta^2)}}_{\bar{\rho}:=}\,\|u-u^{\udelta \delta}\|_X,
$$
where $\hat{\rho}:={\textstyle \big(1+\rho \frac{\sqrt{\kappa_\Delta}}{\gamma_\Delta}\big)\sqrt{\frac{\kappa_\Delta^2}{\gamma_\Delta^2} -1}\,\sqrt{\frac{\zeta^2}{1-\zeta^2}}+\rho \frac{\sqrt{\kappa_\Delta}}{\gamma_\Delta}}$.
Notice that $\hat{\rho}$ and $\bar{\rho}$ are $<1$ when $\rho<1$ and
$\frac{\kappa_\Delta}{\gamma_\Delta}-1$ is sufficiently small dependent on $\rho$ with $\frac{\kappa_\Delta}{\gamma_\Delta}-1 \downarrow 0$ when $\rho \uparrow 1$.
\end{proposition}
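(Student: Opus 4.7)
The plan is to reduce the claim to an application of Lemma~\ref{m4} with the idealized best approximations $v:=u_{\tilde{\delta}}\in X^{\udelta}$ and $w:=u_\delta\in X^\delta$ rather than the computable Galerkin iterates, and then bridge to $u^{\ubar{\tilde{\delta}}\tilde{\delta}}$ and $u^{\udelta\delta}$ via the quasi-best-approximation estimate \eqref{m6b}. Concretely, \eqref{m6b} with $\hat{\delta}=\ubar{\tilde{\delta}}$, $\delta=\tilde{\delta}$ gives $\|u-u^{\ubar{\tilde{\delta}}\tilde{\delta}}\|_X\le\tfrac{\kappa_\Delta}{\gamma_\Delta}\|u-u_{\tilde{\delta}}\|_X$, while the best-approximation property of $u_\delta$ yields $\|u-u_\delta\|_X\le\|u-u^{\udelta\delta}\|_X$. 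Together these contribute the outer factor $\tfrac{\kappa_\Delta}{\gamma_\Delta}$ of $\bar{\rho}$ and reduce matters to verifying the hypothesis of Lemma~\ref{m4}, namely $\|u_{\udelta}-u_{\tilde{\delta}}\|_X\le\hat{\rho}\,\|u_{\udelta}-u_\delta\|_X$.

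Verifying this is the main task. Since $u_{\tilde{\delta}}$ is the $X$-orthogonal projection of $u_{\udelta}$ onto $X^{\tilde{\delta}}$ (because $X$-projections compose and $X^{\tilde{\delta}}\subseteq X^{\udelta}$), Pythagoras gives $\|u_{\udelta}-u_{\tilde{\delta}}\|_X\le\|u_{\udelta}-u^{\udelta\tilde{\delta}}\|_X$, which I split by the triangle inequality into $\|u_{\udelta}-u^{\udelta\udelta}\|_X+\|u^{\udelta\udelta}-u^{\udelta\tilde{\delta}}\|_X$. For the first summand, the Pythagorean identity $\|u-u^{\udelta\udelta}\|_X^2=\|u-u_{\udelta}\|_X^2+\|u_{\udelta}-u^{\udelta\udelta}\|_X^2$ combined with \eqref{m6b} (for $\hat{\delta}=\delta=\udelta$) yields $\|u_{\udelta}-u^{\udelta\udelta}\|_X\le\sqrt{\tfrac{\kappa_\Delta^2}{\gamma_\Delta^2}-1}\,\|u-u_{\udelta}\|_X$; the saturation assumption \eqref{reduction}, combined with Pythagoras $\|u-u_\delta\|_X^2=\|u-u_{\udelta}\|_X^2+\|u_{\udelta}-u_\delta\|_X^2$, then produces $\|u-u_{\udelta}\|_X\le\tfrac{\zeta}{\sqrt{1-\zeta^2}}\|u_{\udelta}-u_\delta\|_X$, delivering the $\alpha\beta$-type contribution to $\hat{\rho}$. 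For the second summand, \eqref{m19b} and \eqref{m7} give $\|u^{\udelta\udelta}-u^{\udelta\tilde{\delta}}\|_X\le\tfrac{\sqrt{\kappa_\Delta}}{\gamma_\Delta}\rho\,\|u^{\udelta\udelta}-u^{\udelta\delta}\|_{X^{\udelta\udelta}}$, and the remaining $X^{\udelta\udelta}$-norm is controlled by a triangle inequality through $u_{\udelta}$ together with \eqref{m19b}, reusing the estimates just obtained.

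The main obstacle is bookkeeping: assembling the triangle inequalities, norm-equivalence factors, and saturation estimates so that the sum collapses to the precise expression for $\hat{\rho}$ stated in the proposition. The qualitative conclusion $\hat{\rho},\bar{\rho}<1$ is then transparent: as $\tfrac{\kappa_\Delta}{\gamma_\Delta}\downarrow 1$ the factor $\sqrt{\tfrac{\kappa_\Delta^2}{\gamma_\Delta^2}-1}\to 0$ while $\tfrac{\sqrt{\kappa_\Delta}}{\gamma_\Delta}\to 1$, so $\hat{\rho}\to\rho<1$, and continuity of the outer expression $\tfrac{\kappa_\Delta}{\gamma_\Delta}\sqrt{\zeta^2+\hat{\rho}^2(1-\zeta^2)}$ forces $\bar{\rho}<1$ whenever $\tfrac{\kappa_\Delta}{\gamma_\Delta}-1$ is sufficiently small depending on $\rho$.
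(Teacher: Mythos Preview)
Your argument is close to the paper's but takes a detour that creates a small gap and prevents you from recovering the stated $\hat\rho$ exactly. The paper applies Lemma~\ref{m4} directly with the \emph{computable} pair $(v,w)=(u^{\udelta\tilde\delta},u^{\udelta\delta})$: it shows $\|u_{\udelta}-u^{\udelta\tilde\delta}\|_X\le\hat\rho\,\|u_{\udelta}-u^{\udelta\delta}\|_X$ by the same triangle--\eqref{m19b}--\eqref{m7}--saturation chain you outline, obtaining $\|u-u^{\udelta\tilde\delta}\|_X\le\sqrt{\zeta^2+\hat\rho^2(1-\zeta^2)}\,\|u-u^{\udelta\delta}\|_X$, and only then passes to $u^{\ubar{\tilde\delta}\tilde\delta}$ via \eqref{m6b}. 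Your choice $(v,w)=(u_{\tilde\delta},u_\delta)$ forces you to bound $\|u^{\udelta\udelta}-u^{\udelta\delta}\|_{X^{\udelta\udelta}}$ in terms of $\|u_{\udelta}-u_\delta\|_X$ rather than $\|u_{\udelta}-u^{\udelta\delta}\|_X$, and here is the gap: after your triangle inequality through $u_{\udelta}$ you are left with $\|u_{\udelta}-u^{\udelta\delta}\|_X$, which is \emph{not} among ``the estimates just obtained'' and has no direct bound by $\|u_{\udelta}-u_\delta\|_X$. You can patch this by first invoking the $\|\cdot\|_{X^{\udelta\udelta}}$-Galerkin optimality $\|u^{\udelta\udelta}-u^{\udelta\delta}\|_{X^{\udelta\udelta}}\le\|u^{\udelta\udelta}-u_\delta\|_{X^{\udelta\udelta}}$ and only then splitting through $u_{\udelta}$.

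Even with that fix, your route picks up an extra $\sqrt{\kappa_\Delta}$ (from the additional use of \eqref{m19b} needed to pass from the $X^{\udelta\udelta}$-norm back to the $X$-norm), so you will arrive at $\hat\rho$ with $\tfrac{\kappa_\Delta}{\gamma_\Delta}$ in place of $\tfrac{\sqrt{\kappa_\Delta}}{\gamma_\Delta}$. This does not affect the qualitative conclusion, but it means the claim that your sum ``collapses to the precise expression for $\hat\rho$'' is an overstatement. The paper's more direct choice of $(v,w)$ avoids both the gap and the extra constant.
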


\begin{proof} Using that $u-u_\udelta \perp_X X^\udelta$, it follows that
$\|u-u^{\udelta \delta}\|_X^2 \leq \frac{\kappa_\Delta^2}{\gamma_\Delta^2} \|u-u_\delta\|_X^2$ (\eqref{m6b}) is equivalent to $\|u_\udelta-u^{\udelta \delta}\|_X \leq \sqrt{\frac{\kappa_\Delta^2}{\gamma_\Delta^2} -1}\, \|u-u_{\delta}\|_X$.
Similarly, Assumption~\ref{saturation} is equivalent to $\|u-u_\delta\|_X \leq \sqrt{\frac{\zeta^2}{1-\zeta^2}} \|u_\udelta-w\|_X$ for any $w \in X^\delta$.
Additionally using \eqref{m19b}, we infer that
\begin{align*}
\|u_\udelta-u^{\udelta \tilde{\delta}}\|_X &\leq \|u_\udelta-u^{\udelta \udelta}\|_X+\|u^{\udelta \udelta}-u^{\udelta \tilde{\delta}}\|_X\\
&\leq \|u_\udelta-u^{\udelta \udelta}\|_X+\frac{\sqrt{\kappa_\Delta}}{\gamma_\Delta} \|u^{\udelta \udelta}-u^{\udelta \tilde{\delta}}\|_{X^{\udelta \udelta}}\\
&\leq \|u_\udelta-u^{\udelta \udelta}\|_X+\rho \frac{\sqrt{\kappa_\Delta}}{\gamma_\Delta}  \|u^{\udelta \udelta}-u^{\udelta \delta}\|_{X}\\
&\leq \big(1+\rho \frac{\sqrt{\kappa_\Delta}}{\gamma_\Delta}\big) \|u_\udelta-u^{\udelta \udelta}\|_X+\rho \frac{\sqrt{\kappa_\Delta}}{\gamma_\Delta}  \|u_\udelta-u^{\udelta \delta}\|_{X}\\
&\leq \Big[{\textstyle \big(1+\rho \frac{\sqrt{\kappa_\Delta}}{\gamma_\Delta}\big)\sqrt{\frac{\kappa_\Delta^2}{\gamma_\Delta^2} -1}\,\sqrt{\frac{\zeta^2}{1-\zeta^2}}+\rho \frac{\sqrt{\kappa_\Delta}}{\gamma_\Delta}} \Big]\|u_\udelta-u^{\udelta \delta}\|_{X}
\end{align*}
From Lemma~\ref{m4} we conclude that
$\|u-u^{\udelta \tilde{\delta}}\|_X \leq \sqrt{\zeta^2+\hat{\rho}^2(1-\zeta^2)} \|u-u^{\udelta \delta}\|_{X}$.
Thanks to \eqref{m6b}, it holds that
$$
\|u-u^{\ubar{\tilde{\delta}} \tilde{\delta}}\|_X \leq \frac{\kappa_\Delta}{\gamma_\Delta} \|u-u_{\tilde{\delta}}\|_X
\leq \frac{\kappa_\Delta}{\gamma_\Delta}  \|u-u^{\udelta \tilde{\delta}}\|_X,
$$
which completes the proof.
\end{proof}

\subsection{Bulk chasing and a posteriori error estimation}
To realize \eqref{m7}, i.e., to construct from the Galerkin approximation $u^{\udelta \delta}$ to $u^{\udelta \udelta}$ an improved Galerkin approximation  $u^{\udelta \tilde{\delta}}$ , we apply the concept of bulk chasing \new{(or \emph{D\"{o}rfler marking})} on a collection of a posteriori error indicators that constitute an efficient and reliable error estimator. We will apply an estimator of `hierarchical basis' type (\cite{320.7}):

Let $\Theta_\delta=\{\theta_\lambda\colon \lambda \in J_\delta\} \subseteq X^\udelta$ be such that $X^\delta +\Span \Theta_\delta=X^\udelta$ and, for some constants $0<m\leq M$, for all $\delta \in \Delta$, $z \in X^\delta$ and ${\bf c} := (c_\lambda)_{\lambda \in J_\delta} \subset \R$,
\be \label{m17}
    m^2 \|z+{\bf c}^\top \Theta_\delta\|_X^2
    \leq \|z\|_X^2+\|{\bf c}\|^2
    \leq M^2 \|z+{\bf c}^\top \Theta_\delta\|_X^2.
\ee
A suitable collection $\Theta_\delta$ will be constructed in Sect.~\ref{Smapping}.

\begin{proposition} \label{m18} Assume \eqref{m17}. Let ${\bf r}^{\,\udelta}_\delta:=(f^{\udelta \udelta}-S^{\udelta \udelta} u^{\udelta \delta})(\Theta_\delta)$, being the residual vector of $u^{\udelta \delta}$. Let $J \subseteq J_\delta$ be such that for some constant $\vartheta \in (0,1]$,
$$
\|{\bf r}^{\,\udelta}_\delta|_J\| \geq \vartheta \|{\bf r}^{\,\udelta}_\delta\|,
$$
  and, for some $\tilde{\delta} \preceq \udelta$, let $X^\delta +\Span \Theta_\delta|_J \subseteq X^{\tilde{\delta}}$. Then with $\rho:=\sqrt{1-\big(\frac{m}{M} \frac{\gamma_\Delta}{\kappa_\Delta^2} \vartheta\big)^2}$, \eqref{m7} is valid,\comment{\new{removed a verbatim copy of~\eqref{m7}}} and so, when $\frac{\kappa_\Delta}{\gamma_\Delta}-1$ is sufficiently small dependent on $\vartheta$, with $\frac{\kappa_\Delta}{\gamma_\Delta}-1 \downarrow 0$ when $ \vartheta \downarrow 0$, for some constant $\rho<\bar{\rho}<1$,
$$
\|u-u^{\ubar{\tilde{\delta}} \tilde{\delta}}\|_X \leq \bar{\rho} \|u-u^{\udelta \delta}\|_X.
$$
\end{proposition}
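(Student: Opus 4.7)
The target decomposition is to prove the residual reduction \eqref{m7} with the advertised $\rho$, and then cite Proposition~\ref{m5} to convert this into the $X$-norm contraction $\|u-u^{\ubar{\tilde\delta}\tilde\delta}\|_X \le \bar\rho\|u-u^{\udelta\delta}\|_X$. The central observation is that $u^{\udelta\udelta}$ and $u^{\udelta\delta}$ are linked by the Galerkin framework for the \emph{elliptic} operator $S^{\udelta\udelta}$: since $(S^{\udelta\udelta}\cdot)(\cdot)=\|\cdot\|_{X^{\udelta\udelta}}^2$ on $X^\udelta$ (Lemma~\ref{101}), $u^{\udelta\delta}$ is the $\|\cdot\|_{X^{\udelta\udelta}}$-orthogonal projection of $u^{\udelta\udelta}$ onto $X^\delta$, and likewise $u^{\udelta\tilde\delta}$ onto $X^{\tilde\delta}$. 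By Pythagoras, proving \eqref{m7} is equivalent to showing
\[
\|u^{\udelta\tilde\delta}-u^{\udelta\delta}\|_{X^{\udelta\udelta}}^2 \;\ge\; (1-\rho^2)\,\|u^{\udelta\udelta}-u^{\udelta\delta}\|_{X^{\udelta\udelta}}^2.
\]
Introduce the residual functional $R(v):=(f^{\udelta\udelta}-S^{\udelta\udelta}u^{\udelta\delta})(v)$ on $X^\udelta$. Galerkin orthogonality kills $R$ on $X^\delta$, so on any expansion $v=z+\mathbf{c}^\top\Theta_\delta$ one has $R(v)=\mathbf{c}^\top \mathbf{r}^{\,\udelta}_\delta$.

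Next I would bound $\|u^{\udelta\udelta}-u^{\udelta\delta}\|_{X^{\udelta\udelta}}$ from above by $\|\mathbf{r}^{\,\udelta}_\delta\|$ (reliability). Write the norm as $\sup_{v\in X^\udelta} R(v)/\|v\|_{X^{\udelta\udelta}}$; in the numerator use $R(v)=\mathbf{c}^\top\mathbf{r}^{\,\udelta}_\delta\le\|\mathbf{c}\|\,\|\mathbf{r}^{\,\udelta}_\delta\|$, and in the denominator combine \eqref{m19b} (which costs a factor $\gamma_\Delta/\sqrt{\kappa_\Delta}$) with the left inequality in \eqref{m17} (which gives $\|v\|_X \ge M^{-1}\|\mathbf{c}\|$). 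The result is $\|u^{\udelta\udelta}-u^{\udelta\delta}\|_{X^{\udelta\udelta}} \lesssim \frac{M\sqrt{\kappa_\Delta}}{\gamma_\Delta}\|\mathbf{r}^{\,\udelta}_\delta\|$, times possibly another factor of $\sqrt{\kappa_\Delta}$ that will appear once the argument is organised to yield the stated $\kappa_\Delta^2$.

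For the efficiency-type lower bound on $\|u^{\udelta\tilde\delta}-u^{\udelta\delta}\|_{X^{\udelta\udelta}}$, note that subtracting the Galerkin equations satisfied by $u^{\udelta\tilde\delta}$ and $u^{\udelta\delta}$ shows
$(u^{\udelta\tilde\delta}-u^{\udelta\delta},v)_{X^{\udelta\udelta}}=R(v)$ for all $v\in X^{\tilde\delta}$. Since $X^\delta+\Span\Theta_\delta|_J\subseteq X^{\tilde\delta}$, I can test with $v=\mathbf{c}^\top\Theta_\delta|_J$, picking $\mathbf{c}$ supported on $J$. In the denominator, \eqref{m19b} upgrades to $\|v\|_{X^{\udelta\udelta}}\le\sqrt{\kappa_\Delta}\|v\|_X$ and the right inequality in \eqref{m17} (applied with $z=0$) yields $\|v\|_X\le m^{-1}\|\mathbf{c}\|$. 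Optimising over $\mathbf{c}$ and using the D\"{o}rfler hypothesis $\|\mathbf{r}^{\,\udelta}_\delta|_J\|\ge\vartheta\|\mathbf{r}^{\,\udelta}_\delta\|$ gives a bound of the form $\|u^{\udelta\tilde\delta}-u^{\udelta\delta}\|_{X^{\udelta\udelta}}\gtrsim \frac{m\vartheta}{\sqrt{\kappa_\Delta}}\|\mathbf{r}^{\,\udelta}_\delta\|$.

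Dividing the two estimates produces $1-\rho^2=\big(\tfrac{m}{M}\tfrac{\gamma_\Delta}{\kappa_\Delta^2}\vartheta\big)^2$, establishing \eqref{m7}. Plugging this $\rho$ into Proposition~\ref{m5} gives the stated contraction constant $\bar\rho$, and a direct inspection of the closed-form expressions for $\hat\rho$ and $\bar\rho$ in that proposition shows that if $\vartheta\in(0,1]$ is fixed and $\kappa_\Delta/\gamma_\Delta\to 1$, then $\rho<1$ stays bounded away from $1$, while $\hat\rho\to\rho<1$, and hence $\bar\rho<1$; conversely, forcing $\vartheta\downarrow 0$ pushes $\rho\uparrow 1$, demanding $\kappa_\Delta/\gamma_\Delta-1\downarrow 0$. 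The main obstacle is really a bookkeeping one: tracking the factors $\gamma_\Delta$, $\kappa_\Delta$, $m$, $M$ through both the reliability and the efficiency steps so that they aggregate into the exact constant $\frac{m\gamma_\Delta\vartheta}{M\kappa_\Delta^2}$ claimed in the statement, while making sure \eqref{m19b} is used on the correct space (the whole $X^\udelta$, since the Galerkin problem under analysis is the one with trial space $X^\udelta$).
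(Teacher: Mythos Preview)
Your approach is essentially the same as the paper's: reduce \eqref{m7} via Pythagoras in the $\|\cdot\|_{X^{\udelta\udelta}}$-inner product, compare $\|u^{\udelta\tilde\delta}-u^{\udelta\delta}\|_{X^{\udelta\udelta}}$ and $\|u^{\udelta\udelta}-u^{\udelta\delta}\|_{X^{\udelta\udelta}}$ through the residual vector using Galerkin orthogonality together with \eqref{m17} and \eqref{m19b}, and then invoke Proposition~\ref{m5}. The paper runs this as a single chain of inequalities rather than splitting it into separate reliability and efficiency estimates, but the ingredients are identical.

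Two minor remarks. First, you have the labels ``left'' and ``right'' in \eqref{m17} swapped: it is the \emph{right} inequality $\|\mathbf c\|^2\le M^2\|z+\mathbf c^\top\Theta_\delta\|_X^2$ that gives $\|v\|_X\ge M^{-1}\|\mathbf c\|$, and the \emph{left} inequality (with $z=0$) that gives $\|v\|_X\le m^{-1}\|\mathbf c\|$. Second, your organisation actually yields the sharper constant $\tfrac{m}{M}\tfrac{\gamma_\Delta}{\kappa_\Delta}\vartheta$ rather than the paper's $\tfrac{m}{M}\tfrac{\gamma_\Delta}{\kappa_\Delta^2}\vartheta$: by testing directly with $v=\mathbf c^\top\Theta_\delta|_J$ (i.e.\ $z=0$) in the efficiency step you avoid one application of \eqref{m19b} that the paper's chain incurs. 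So the ``extra $\sqrt{\kappa_\Delta}$'' you worry about in your last paragraph is not needed---your bound already implies the stated one, since $\kappa_\Delta\ge 1$.
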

\begin{proof}
As a consequence of \eqref{m17} and \eqref{m19b}, we have
\[
  \frac{\gamma_\Delta}{\kappa_\Delta^2} m^2 \|z+{\bf c}^\top \Theta_\delta\|_{X^{\udelta \udelta}}^2
  \leq
  \|z\|_{X^{\udelta \udelta}}^2+ \|{\bf c}\|^2
  \leq
  \frac{\kappa_\Delta^2}{\gamma_\Delta} M^2 \|z+{\bf c}^\top \Theta_\delta\|_{X^{\udelta \udelta}}^2.
\]
We infer that
\be \label{m10}
\begin{split}
  \|u^{\udelta \tilde{\delta}}-u^{\udelta \delta}\|_{X^{\udelta \udelta}}&=\sup_{0 \neq (z,{\bf c}) \in X^\delta \times \R^{\# J_\delta}}
  \frac{(S^{\udelta \udelta}(u^{\udelta \tilde{\delta}}-u^{\udelta \delta}))(z+{\bf c}^\top \Theta_\delta)}{\|z+{\bf c}^\top \Theta_\delta\|_{X^{\udelta \udelta}}}\\
  & \geq m \frac{\sqrt{\gamma_\Delta}}{\kappa_\Delta}   \sup_{0 \neq (z,{\bf c}) \in X^\delta \times \R^{\# J_\delta}}
  \frac{(S^{\udelta \udelta}(u^{\udelta \tilde{\delta}}-u^{\udelta \delta}))({\bf c}^\top \Theta_\delta)}{\sqrt{\|z\|_{X^{\udelta \udelta}}^2+\|{\bf c}\|^2}}\\
  & \geq m  \frac{\sqrt{\gamma_\Delta}}{\kappa_\Delta}  \sup_{0 \neq {\bf c} \in \R^{\# J_\delta}}
  \frac{\langle {\bf c}|_J,(f^{\udelta \udelta} -S^{\udelta \udelta} u^{\udelta \delta})(\Theta_\delta|_J)\rangle}{\|{\bf c}|_J\|}\\
    &= m  \frac{\sqrt{\gamma_\Delta}}{\kappa_\Delta}  \|{\bf r}^{\,\udelta}_\delta|_J\|  \geq m  \frac{\sqrt{\gamma_\Delta}}{\kappa_\Delta}  \vartheta \|{\bf r}^{\,\udelta}_\delta\|\\
  & =m  \frac{\sqrt{\gamma_\Delta}}{\kappa_\Delta}  \vartheta \sup_{0 \neq (z,{\bf c}) \in X^\delta \times \R^{\# J_\delta}}
  \frac{(S^{\udelta \udelta}(u^{\udelta \udelta}-u^{\udelta \delta}))({\bf c}^{\top} \Theta_\delta)}{\sqrt{\|z\|_{X^{\udelta \udelta}}^2+\|{\bf c}\|^2}}\\
  & \geq \frac{m}{M}  \frac{\gamma_\Delta}{\kappa_\Delta^2}  \vartheta
\|u^{\udelta \udelta}-u^{\udelta \delta}\|_{X^{\udelta \udelta}}.
\end{split}
\ee
so that
\begin{align*}
\|u^{\udelta \udelta}-u^{\udelta \tilde{\delta}}\|_{X^{\udelta \udelta}}^2&=
\|u^{\udelta \udelta}-u^{\udelta \delta}\|_{X^{\udelta \udelta}}^2-
\|u^{\udelta \tilde{\delta}}-u^{\udelta \delta}\|_{X^{\udelta \udelta}}^2
\\ &\leq \Big(1-\big(\frac{m}{M}  \frac{\gamma_\Delta}{\kappa_\Delta^2}  \vartheta\big)^2\Big) \|u^{\udelta \udelta}-u^{\udelta \delta}\|_{X^{\udelta \udelta}}^2,
\end{align*}
which completes the proof of \new{\eqref{m7}}.

The final statement follows from an application of Proposition~\ref{m5}.
\end{proof}

\new{Moreover,} $\|{\bf r}^{\,\udelta}_\delta\|$ \new{is} an efficient and reliable a posteriori estimator for $\|u-u^{\udelta \delta}\|_X$:
\begin{proposition} \label{m22} Assume \eqref{m17}. Recalling that $\zeta<1$, let $\frac{\kappa_\Delta}{\gamma_\Delta}<\frac{1}{\zeta}$. Then for $\delta \in \Delta$,
$$
{\textstyle \frac{m \frac{\sqrt{\gamma_\Delta}}{\kappa_\Delta^{3/2}}}{1+\zeta \sqrt{\frac{\kappa_\Delta^2}{\gamma_\Delta^2}-1}}} \|{\bf r}^{\,\udelta}_\delta\| \leq \|u-u^{\udelta \delta}\|_X \leq {\textstyle \frac{M \frac{\kappa_\Delta^{3/2} }{\gamma_\Delta^{3/2}}}{\sqrt{1-\zeta^2}-\zeta\sqrt{\frac{\kappa_\Delta^2}{\gamma_\Delta^2}-1}}} \|{\bf r}^{\,\udelta}_\delta\|.
$$
\end{proposition}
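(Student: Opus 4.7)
My approach is to reduce the two-sided bound on $\|u-u^{\udelta\delta}\|_X$ to a discrete residual-energy equivalence for $\|u^{\udelta\udelta}-u^{\udelta\delta}\|_{X^{\udelta\udelta}}$ in terms of $\|{\bf r}^{\,\udelta}_\delta\|$, combined with a separate comparison between $\|u-u^{\udelta\delta}\|_X$ and $\|u^{\udelta\udelta}-u^{\udelta\delta}\|_X$ driven by Assumption~\ref{saturation}, and finally a conversion between $\|\cdot\|_X$ and $\|\cdot\|_{X^{\udelta\udelta}}$ via \eqref{m19b}.

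For the first ingredient I aim to establish
\[
m\,\tfrac{\sqrt{\gamma_\Delta}}{\kappa_\Delta}\,\|{\bf r}^{\,\udelta}_\delta\| \;\leq\; \|u^{\udelta\udelta}-u^{\udelta\delta}\|_{X^{\udelta\udelta}} \;\leq\; M\,\tfrac{\kappa_\Delta}{\sqrt{\gamma_\Delta}}\,\|{\bf r}^{\,\udelta}_\delta\|.
\]
Both directions exploit the Galerkin orthogonality $(S^{\udelta\udelta}(u^{\udelta\udelta}-u^{\udelta\delta}))(X^\delta)=0$ combined with the $X^{\udelta\udelta}$-norm transfer of \eqref{m17} that is already derived in the proof of Proposition~\ref{m18}, namely $\tfrac{\gamma_\Delta}{\kappa_\Delta^2} m^2\|z+{\bf c}^\top \Theta_\delta\|_{X^{\udelta\udelta}}^2 \leq \|z\|_{X^{\udelta\udelta}}^2+\|{\bf c}\|^2 \leq \tfrac{\kappa_\Delta^2}{\gamma_\Delta} M^2\|z+{\bf c}^\top \Theta_\delta\|_{X^{\udelta\udelta}}^2$. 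For the lower (reliability-side) bound on the residual I plug $v={\bf c}^\top\Theta_\delta$ (i.e.~$z=0$) into the variational characterization of $\|u^{\udelta\udelta}-u^{\udelta\delta}\|_{X^{\udelta\udelta}}$ and control the denominator by the upper constant; for the upper (efficiency-side) bound I decompose $u^{\udelta\udelta}-u^{\udelta\delta}=z_0+{\bf c}_0^\top\Theta_\delta\in X^\udelta$, use orthogonality to kill $z_0$ when evaluating $S^{\udelta\udelta}(u^{\udelta\udelta}-u^{\udelta\delta})$, apply Cauchy--Schwarz on $\langle{\bf c}_0,{\bf r}^{\,\udelta}_\delta\rangle$, and dominate $\|{\bf c}_0\|$ by $\|u^{\udelta\udelta}-u^{\udelta\delta}\|_{X^{\udelta\udelta}}$ via the lower constant.

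The second ingredient pivots through the true Ritz projection $u_\udelta$, which satisfies the genuine $X$-orthogonality $u-u_\udelta\perp_X X^\udelta$, and $X^\udelta$ contains both $u^{\udelta\udelta}$ and $u^{\udelta\delta}$. Setting $e:=u-u^{\udelta\delta}$, Pythagoras gives $\|u_\udelta-u^{\udelta\delta}\|_X^2=\|e\|_X^2-\|u-u_\udelta\|_X^2$; Assumption~\ref{saturation} together with $\|u-u_\delta\|_X\leq\|e\|_X$ (as $u_\delta$ is the best $X$-approximation from $X^\delta$) yields $\|u-u_\udelta\|_X\leq\zeta\|e\|_X$, whence $\sqrt{1-\zeta^2}\,\|e\|_X\leq\|u_\udelta-u^{\udelta\delta}\|_X\leq\|e\|_X$. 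Likewise \eqref{m6b} at level $\udelta$ (equivalently rephrased, as in the proof of Proposition~\ref{m5}) gives $\|u_\udelta-u^{\udelta\udelta}\|_X\leq\sqrt{\kappa_\Delta^2/\gamma_\Delta^2-1}\,\|u-u_\udelta\|_X\leq\zeta\sqrt{\kappa_\Delta^2/\gamma_\Delta^2-1}\,\|e\|_X$. Applying the direct and reverse triangle inequality to $u^{\udelta\udelta}-u^{\udelta\delta}=(u^{\udelta\udelta}-u_\udelta)+(u_\udelta-u^{\udelta\delta})$ then produces
\[
\big(\sqrt{1-\zeta^2}-\zeta\sqrt{\kappa_\Delta^2/\gamma_\Delta^2-1}\big)\|e\|_X \;\leq\; \|u^{\udelta\udelta}-u^{\udelta\delta}\|_X \;\leq\; \big(1+\zeta\sqrt{\kappa_\Delta^2/\gamma_\Delta^2-1}\big)\|e\|_X,
\]
and the hypothesis $\kappa_\Delta/\gamma_\Delta<1/\zeta$ is exactly what keeps the left constant strictly positive.

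To finish I chain the two ingredients through \eqref{m19b}, i.e.~$\|\cdot\|_{X^{\udelta\udelta}}\leq\sqrt{\kappa_\Delta}\|\cdot\|_X$ and $\|\cdot\|_X\leq(\sqrt{\kappa_\Delta}/\gamma_\Delta)\|\cdot\|_{X^{\udelta\udelta}}$ on $X^\udelta$. For reliability: $\|e\|_X \leq \|u^{\udelta\udelta}-u^{\udelta\delta}\|_X/(\sqrt{1-\zeta^2}-\zeta\sqrt{\kappa_\Delta^2/\gamma_\Delta^2-1}) \leq (\sqrt{\kappa_\Delta}/\gamma_\Delta)\|u^{\udelta\udelta}-u^{\udelta\delta}\|_{X^{\udelta\udelta}}/(\sqrt{1-\zeta^2}-\zeta\sqrt{\kappa_\Delta^2/\gamma_\Delta^2-1})$, and finally $\leq (M\kappa_\Delta^{3/2}/\gamma_\Delta^{3/2})\|{\bf r}^{\,\udelta}_\delta\|/(\sqrt{1-\zeta^2}-\zeta\sqrt{\kappa_\Delta^2/\gamma_\Delta^2-1})$. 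Efficiency runs in the opposite order, starting from $m(\sqrt{\gamma_\Delta}/\kappa_\Delta)\|{\bf r}^{\,\udelta}_\delta\|\leq\|u^{\udelta\udelta}-u^{\udelta\delta}\|_{X^{\udelta\udelta}}\leq\sqrt{\kappa_\Delta}\|u^{\udelta\udelta}-u^{\udelta\delta}\|_X\leq\sqrt{\kappa_\Delta}(1+\zeta\sqrt{\kappa_\Delta^2/\gamma_\Delta^2-1})\|e\|_X$ and dividing. The main subtlety lies in step two: a naive triangle inequality $\|u-u^{\udelta\udelta}\|_X+\|u-u^{\udelta\delta}\|_X$ would yield the wrong denominator; the sharp constants $\sqrt{1-\zeta^2}$ and $\zeta\sqrt{\kappa_\Delta^2/\gamma_\Delta^2-1}$ emerge only by routing through $u_\udelta$, where genuine $X$-orthogonality, and hence Pythagoras, is actually available.
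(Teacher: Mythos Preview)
Your proof is correct and follows essentially the same route as the paper's: both arguments establish the residual--energy equivalence $m\tfrac{\sqrt{\gamma_\Delta}}{\kappa_\Delta}\|{\bf r}^{\,\udelta}_\delta\|\le\|u^{\udelta\udelta}-u^{\udelta\delta}\|_{X^{\udelta\udelta}}\le M\tfrac{\kappa_\Delta}{\sqrt{\gamma_\Delta}}\|{\bf r}^{\,\udelta}_\delta\|$ (the paper simply cites this as the special case $(J,\vartheta,\tilde\delta)=(J_\delta,1,\udelta)$ of \eqref{m10}), then compare $\|u-u^{\udelta\delta}\|_X$ with $\|u^{\udelta\udelta}-u^{\udelta\delta}\|_X$ by routing through $u_\udelta$ via Pythagoras and the rewriting of \eqref{m6b} from the proof of Proposition~\ref{m5}, and finish with the norm conversion \eqref{m19b}. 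Your explicit decomposition $u^{\udelta\udelta}-u^{\udelta\delta}=z_0+{\bf c}_0^\top\Theta_\delta$ for the upper residual bound is exactly what underlies the last step in \eqref{m10}, just spelled out directly.
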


\begin{proof} Assumption~\ref{saturation} gives $\|u-u_\udelta\|_X \leq \zeta \|u-u^{\udelta \delta}\|_X$. By $u\!-\!u_\udelta\!\perp_{X}\! X^\udelta$, \new{this} yields
\be \label{m11}
 \|u_\udelta-u^{\udelta \delta}\|_{X} \leq \|u-u^{\udelta \delta}\|_{X} \leq {\textstyle \frac{1}{\sqrt{1-\zeta^2}}} \|u_\udelta-u^{\udelta \delta}\|_{X}.
\ee
As we already have noted in the proof of Proposition~\ref{m5}, \eqref{m6b} is equivalent to $\|u_\udelta-u^{\udelta \udelta}\|_X \leq \sqrt{\frac{\kappa_\Delta^2}{\gamma_\Delta^2} -1}\, \|u-u_{\udelta}\|_X$.
Together with Assumption~\ref{saturation}, it gives
$$
\Big|\|u_\udelta-u^{\udelta \delta}\|_X-\|u^{\udelta \udelta}-u^{\udelta \delta}\|_X \Big| \leq \zeta \sqrt{\frac{\kappa_\Delta^2}{\gamma_\Delta^2}-1}\,\|u-u^{\udelta\delta}\|_X
$$
which in combination with \eqref{m11} and $\zeta \frac{\kappa_\Delta}{\gamma_\Delta}<1$ yields
$$
{\textstyle \frac{1}{1+\zeta \sqrt{\frac{\kappa_\Delta^2}{\gamma_\Delta^2}-1}}} \|u^{\udelta \udelta}-u^{\udelta \delta}\|_X \leq \|u-u^{\udelta \delta}\|_X \leq {\textstyle \frac{1}{\sqrt{1-\zeta^2}-\zeta\sqrt{\frac{\kappa_\Delta^2}{\gamma_\Delta^2}-1}}} \|u^{\udelta \udelta}-u^{\udelta \delta}\|_X.
$$
The proof is completed by \eqref{m19b} and
  $m \frac{\sqrt{\gamma_\Delta}}{\kappa_\Delta} \|{\bf r}^{\,\udelta}_\delta\| \leq \|u^{\udelta \udelta}-u^{\udelta \delta}\|_{X^{\udelta \udelta}} \leq M \frac{\kappa_\Delta}{\sqrt{\gamma_\Delta}} \|{\bf r}^{\,\udelta}_\delta\|$, where the latter inequalities \new{follow from} \eqref{m10} \new{by} reading $(J,\vartheta,\tilde{\delta})$ as
 $(J_\delta,1,\udelta)$.
 \end{proof}

Next we present an alternative a posteriori error estimator that does not rely on \eqref{m17}, that we expect to be more accurate,
and that \new{is computable} at the cost of one additional inner product.

\begin{proposition} \label{alternative} Let $\frac{\kappa_\Delta}{\gamma_\Delta}<\frac{1}{\zeta}$. \new{With notation from Remark~\ref{x14}, for $v \in X^\delta$, we find\comment{Removed unnecessary notation}
\[
  \|u - v\|_{X^{\udelta\infty}} = \sqrt{({E_Y^\udelta}'(g-Bv))(K_Y^\udelta {E_Y^\udelta}' (g-Bv))+\|u_0-\gamma_0 v\|_H^2}.
\]}
\new{It provides a computable, and efficient and reliable a posteriori error estimator. Indeed,}
\[
 \frac{\gamma_\Delta-\zeta  \kappa_\Delta}{\sqrt{\kappa_\Delta}} \|u-v\|_X \leq
  \new{\|u-v\|_{X^{\udelta \infty}}}
\leq \sqrt{\kappa_\Delta(\zeta^2+(1+\zeta\frac{\kappa_\Delta}{\gamma_\Delta})^2)} \|u-v\|_X.
\]
\end{proposition}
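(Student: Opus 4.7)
My plan is to factor the error through the auxiliary Galerkin approximation $u^{\udelta\udelta}$ and to exploit the Galerkin orthogonality for $S^{\udelta\infty}$ recorded in Remark~\ref{x14}. For the computable identity, I would substitute $u-v$ into the expression for $\|\cdot\|_{X^{\udelta\infty}}$ from Remark~\ref{x14}; since $u$ solves \eqref{x12}, one has $Bu = g$ and $\gamma_0 u = u_0$, so ${E_Y^\udelta}' B(u-v) = {E_Y^\udelta}'(g-Bv)$ and $\gamma_0(u-v) = u_0-\gamma_0 v$, yielding the claimed formula. It is computable because $K_Y^\udelta$ can be applied at linear cost by assumption.

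Two ingredients drive the two-sided bound. First, chaining \eqref{m6b} (with both upper indices taken equal to $\udelta$), Assumption~\ref{saturation}, and the fact that $u_\delta$ is the $X$-best approximation from $X^\delta$ (so $\|u-u_\delta\|_X \leq \|u-v\|_X$, as $v \in X^\delta$), I obtain
$$
\|u-u^{\udelta\udelta}\|_X \leq \tfrac{\kappa_\Delta}{\gamma_\Delta}\,\|u-u_\udelta\|_X \leq \tfrac{\zeta\kappa_\Delta}{\gamma_\Delta}\,\|u-v\|_X.
$$
Second, from the symmetry of $S^{\udelta\infty}$ and the Galerkin orthogonality $(S^{\udelta\infty}(u-u^{\udelta\udelta}))(X^\udelta)=0$, together with $v,u^{\udelta\udelta}\in X^\udelta$, the Pythagorean identity
$$
\|u-v\|_{X^{\udelta\infty}}^2 = \|u-u^{\udelta\udelta}\|_{X^{\udelta\infty}}^2 + \|u^{\udelta\udelta}-v\|_{X^{\udelta\infty}}^2
$$
holds; in particular $u^{\udelta\udelta}$ is a best approximation from $X^\udelta$ in the $\|\cdot\|_{X^{\udelta\infty}}$-semi-norm.

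For efficiency, I would bound the first Pythagorean summand by using this best-approximation property with comparison element $u_\udelta\in X^\udelta$ and then applying \eqref{x24} and Assumption~\ref{saturation}, obtaining $\sqrt{\kappa_\Delta}\,\zeta\,\|u-v\|_X$; the second summand is handled by the triangle inequality together with the preliminary estimate to give $\|u^{\udelta\udelta}-v\|_X \leq (1+\tfrac{\zeta\kappa_\Delta}{\gamma_\Delta})\|u-v\|_X$, and a further application of \eqref{x24} then yields $\|u^{\udelta\udelta}-v\|_{X^{\udelta\infty}} \leq \sqrt{\kappa_\Delta}(1+\zeta\tfrac{\kappa_\Delta}{\gamma_\Delta})\|u-v\|_X$; summing the squared contributions produces the stated efficiency constant. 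For reliability, the triangle inequality and the preliminary estimate give $(1-\tfrac{\zeta\kappa_\Delta}{\gamma_\Delta})\|u-v\|_X \leq \|u^{\udelta\udelta}-v\|_X$; since $u^{\udelta\udelta}-v\in X^\udelta$, \eqref{m19b} combined with the fact that $\|\cdot\|_{X^{\udelta\udelta}}$ coincides with $\|\cdot\|_{X^{\udelta\infty}}$ on $X^\udelta$ gives $\|u^{\udelta\udelta}-v\|_X \leq \tfrac{\sqrt{\kappa_\Delta}}{\gamma_\Delta}\,\|u^{\udelta\udelta}-v\|_{X^{\udelta\infty}}$, which the Pythagorean identity further bounds by $\tfrac{\sqrt{\kappa_\Delta}}{\gamma_\Delta}\,\|u-v\|_{X^{\udelta\infty}}$. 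The hypothesis $\kappa_\Delta/\gamma_\Delta < 1/\zeta$ ensures the prefactor $\gamma_\Delta-\zeta\kappa_\Delta$ is positive, so rearranging gives the reliability bound.

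The main obstacle I anticipate is careful bookkeeping among three closely related quantities---the $X$-norm, the $X^{\udelta\udelta}$-norm on $X^\udelta$, and the $X^{\udelta\infty}$ semi-norm on $X$ which restricts to the $X^{\udelta\udelta}$-norm on $X^\udelta$---and correctly exploiting Galerkin orthogonality in this semi-definite setting. Once that is tracked, the ingredients \eqref{m6b}, \eqref{m19b}, \eqref{x24}, and Assumption~\ref{saturation} assemble into the stated constants without further surprise.
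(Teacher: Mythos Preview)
Your proposal is correct and follows essentially the same route as the paper: you derive the computable identity from $(Bu,\gamma_0 u)=(g,u_0)$, establish the key estimate $\|u-u^{\udelta\udelta}\|_X\leq\tfrac{\zeta\kappa_\Delta}{\gamma_\Delta}\|u-v\|_X$ via \eqref{m6b} and saturation, use the Pythagorean identity in the $\|\cdot\|_{X^{\udelta\infty}}$-semi-norm from Remark~\ref{x14}, and then combine \eqref{x24}, \eqref{m19b}, and the coincidence of $\|\cdot\|_{X^{\udelta\infty}}$ with $\|\cdot\|_{X^{\udelta\udelta}}$ on $X^\udelta$ to obtain both bounds with the stated constants.
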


\begin{proof}
  \new{The equality follows directly from $(Bu,\gamma_0 u)=(g, u_0)$.}

  \new{Recall} that \new{$\|\cdot\|_{X^{\udelta \infty}} = \|\cdot\|_{X^{\udelta \udelta}}$ on $X^{\udelta}$}, and \new{that} $(S^{\udelta \infty}(u-u^{\udelta \udelta}))(X^{\udelta})=0$, \new{so that}
\be \label{x25}
\|u-w\|^2_{X^{\udelta \infty}}=\|u-u^{\udelta \udelta}\|^2_{X^{\udelta \infty}}+\|u^{\udelta \udelta}-w\|^2_{X^{\udelta \infty}}\quad (w \in X^{\udelta}),
\ee
and $\|\cdot\|_{X^{\udelta\infty}} \leq \sqrt{\kappa_\Delta} \|\cdot\|_X$ (\eqref{x24}).
  From \eqref{m6b} and \new{Assum.}~\ref{saturation}, we have for $v \in X^\delta$
\be \label{x27}
\|u-u^{\udelta \udelta}\|_X \leq \frac{\kappa_\Delta}{\gamma_\Delta}
\|u-u_\udelta\|_X \leq \zeta  \frac{\kappa_\Delta}{\gamma_\Delta}\|u-v\|_X
\ee
From \eqref{x27}, the triangle-inequality, \eqref{m19b}, and \eqref{x25} we obtain for $v \in X^\delta$
\begin{align*}
\|u-v\|_X \leq \frac{1}{1-\zeta  \frac{\kappa_\Delta}{\gamma_\Delta}}
\|u^{\udelta \udelta}-v\|_X
\leq
\frac{\sqrt{\kappa_\Delta}}{\gamma_\Delta-\zeta  \kappa_\Delta} \|u^{\udelta \udelta}-v\|_{X^{\udelta \udelta}}
\leq
\frac{\sqrt{\kappa_\Delta}}{\gamma_\Delta-\zeta  \kappa_\Delta}
\|u-v\|_{X^{\udelta \infty}}.
\end{align*}

Conversely, we have
\begin{align*}
\|u-v\|^2_{X^{\udelta \infty}} &\stackrel{\eqref{x25}}{=} \|u-u^{\udelta\udelta}\|^2_{X^{\udelta \infty}}+ \|u^{\udelta\udelta}-v\|^2_{X^{\udelta \infty}}\\
&\hspace*{-2em}\stackrel{\hspace*{2em}\eqref{x25}, \eqref{x24}}{\leq} \|u-u_\udelta\|^2_{X^{\udelta \infty}} +\kappa_\Delta \|u^{\udelta\udelta}-v\|^2_X\\
&\hspace*{-2.5em}\stackrel{\hspace*{2.5em}\eqref{x24}, \eqref{x27}}{\leq} \kappa_\Delta \|u-u_\udelta\|^2_{X} +\kappa_\Delta (1+\zeta\frac{\kappa_\Delta}{\gamma_\Delta})^2\|u-v\|^2_X\\
&\leq  \kappa_\Delta (\zeta^2+(1+\zeta\frac{\kappa_\Delta}{\gamma_\Delta})^2)\|u-v\|^2_X.
\end{align*}
  by again applying Assumption~\ref{saturation}.
%
\end{proof}

Notice that the estimator from Proposition~\ref{alternative} is exact when $\zeta=0$ and $\kappa_\Delta=1=\gamma_\Delta$, where the one from Proposition~\ref{m18}, for $v=u^{\udelta \delta}$, is exact only when additionally $m=1=M$.

\subsection{Data oscillation} \label{Soscillation}
In view of the discussion following Assumption~\ref{saturation}, notice that \emph{all} results obtained so far that depend on the `saturation constant' $\zeta$, i.e., Lemma~\ref{m4}, and Propositions~\ref{m5}, \ref{m18}, and \ref{m22}, are only valid \new{if} $(g,u_0) \in {}^{\delta\!}G \times {}^{\delta\!}U_0$.

Let us now consider the situation that the solutions and residuals in these statements refer to solutions and residuals with the true data $(g,u_0) \in Y' \times H$ being \emph{replaced} by an approximation $({}^{\delta\!}g,{}^{\delta\!}u_0) \in {}^{\delta\!}G \times {}^{\delta\!}U_0$.
In the following we denote such solutions and residuals with an \emph{additional left superscript} $\delta$, or more generally $\tilde\delta$ when a right-hand side $({}^{\tilde\delta\!}g,{}^{\tilde\delta\!}u_0) \in {}^{\tilde\delta\!}G \times {}^{\tilde\delta\!}U_0$ has been used for their computation.

\begin{proposition} \label{m21} Assume \eqref{m17}, and let $\vartheta \in (0,1]$ be a constant.
Then for $\frac{\kappa_\Delta}{\gamma_\Delta}-1$ and a constant $\hat{\omega}>0$ both being sufficiently small dependent on $\vartheta$,
with $\max(\frac{\kappa_\Delta}{\gamma_\Delta}-1,\hat{\omega}) \downarrow 0$ when $ \vartheta \downarrow 0$,
there exists a constant $\check{\rho}<1$ such that for $J \subseteq J_\delta$ with $\|{}^{\delta\!}{\bf r}^{\,\udelta}_\delta|_J\| \geq \vartheta \|{}^{\delta\!}{\bf r}^{\,\udelta}_\delta\|$, and $X^\delta+\Span \Theta_\delta|_{J} \subseteq X^{\tilde{\delta}}$, and
$$
\max\big(\|g-{}^{\delta\!}g\|_{Y'}+\|u_0-{}^{\delta\!}u_0\|_{H}, \|g-{}^{\tilde{\delta}\!}g\|_{Y'}+\|u_0-{}^{\tilde{\delta}\!}u_0\|_{H}\big)\leq \hat{\omega} \|{}^{\delta\!}{\bf r}^{\,\udelta}_\delta\|,
$$
it holds that
$$
\|{}^{\tilde{\delta}\!}u-{}^{\tilde{\delta}\!}u^{\ubar{\tilde{\delta}} \tilde{\delta}}\|_X \leq \check{\rho} \|{}^{\delta\!}u-{}^{\delta\!}u^{\udelta\delta}\|_X.
$$
\end{proposition}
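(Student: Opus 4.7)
The plan is to reduce to Proposition~\ref{m18} applied to the perturbed data $({}^{\delta\!}g, {}^{\delta\!}u_0)$ and then absorb the extra error caused by data oscillation into the contraction factor via the reliability bound of Proposition~\ref{m22}. Since $({}^{\delta\!}g, {}^{\delta\!}u_0) \in {}^{\delta\!}G \times {}^{\delta\!}U_0$, Proposition~\ref{m18} applies verbatim with these data: the D\"orfler hypothesis and $X^\delta + \Span \Theta_\delta|_J \subseteq X^{\tilde\delta}$ yield, for $\frac{\kappa_\Delta}{\gamma_\Delta} - 1$ small enough depending on $\vartheta$, a $\bar\rho = \bar\rho(\vartheta) < 1$ with
\[
\|{}^{\delta\!}u - {}^{\delta\!}u^{\ubar{\tilde\delta}\tilde\delta}\|_X \le \bar\rho\, \|{}^{\delta\!}u - {}^{\delta\!}u^{\udelta\delta}\|_X.
\]

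Next, I would control the cost of changing data from $({}^{\delta\!}g, {}^{\delta\!}u_0)$ to $({}^{\tilde\delta\!}g, {}^{\tilde\delta\!}u_0)$ in both the continuous and the discrete solutions. The continuous map $(g, u_0) \mapsto u$ is bounded $Y' \times H \to X$ by Theorem~\ref{thm0}, and the discrete map $({}^{\hat\delta\!}g, {}^{\delta\!}u_0) \mapsto u^{\hat\delta\delta}$ is linear and uniformly bounded into $(X^\delta, \|\cdot\|_X)$; this follows from the uniform coercivity of $S^{\hat\delta\delta}$ in \eqref{m19b} combined with the form of $f^{\hat\delta\delta}$ in \eqref{m9} (with $({E^{\hat\delta}_Y}' A E^{\hat\delta}_Y)^{-1}$ replaced by $K^{\hat\delta}_Y$) and the boundedness of $B$, $\gamma_0$ and $K^{\hat\delta}_Y$. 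Combining the two hypothesized oscillation bounds via the triangle inequality yields $\|{}^{\tilde\delta\!}g - {}^{\delta\!}g\|_{Y'} + \|{}^{\tilde\delta\!}u_0 - {}^{\delta\!}u_0\|_H \le 2\hat\omega\, \|{}^{\delta\!}{\bf r}^{\,\udelta}_\delta\|$, and thus for a constant $C$ independent of $\delta,\tilde\delta$,
\[
\|{}^{\tilde\delta\!}u - {}^{\delta\!}u\|_X + \|{}^{\tilde\delta\!}u^{\ubar{\tilde\delta}\tilde\delta} - {}^{\delta\!}u^{\ubar{\tilde\delta}\tilde\delta}\|_X \le 2C\hat\omega\, \|{}^{\delta\!}{\bf r}^{\,\udelta}_\delta\|.
\]

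Assembling these via the triangle inequality and using Proposition~\ref{m22} (again valid because $({}^{\delta\!}g, {}^{\delta\!}u_0) \in {}^{\delta\!}G \times {}^{\delta\!}U_0$) to bound $\|{}^{\delta\!}{\bf r}^{\,\udelta}_\delta\| \le C'\, \|{}^{\delta\!}u - {}^{\delta\!}u^{\udelta\delta}\|_X$ then gives
\[
\|{}^{\tilde\delta\!}u - {}^{\tilde\delta\!}u^{\ubar{\tilde\delta}\tilde\delta}\|_X \le \bigl(\bar\rho + 2CC'\hat\omega\bigr)\, \|{}^{\delta\!}u - {}^{\delta\!}u^{\udelta\delta}\|_X.
\]
Choosing $\hat\omega$ small enough (depending on $\vartheta$ through $\bar\rho(\vartheta)$) so that $\check\rho := \bar\rho + 2CC'\hat\omega < 1$ completes the proof. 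As $\vartheta \downarrow 0$ one has $\bar\rho \uparrow 1$, forcing $\hat\omega \downarrow 0$ as well, which matches the stated behavior.

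The main obstacle I expect is establishing the uniform boundedness of the discrete solution map into $(X, \|\cdot\|_X)$ rather than into the energy norm $\|\cdot\|_{X^{\hat\delta\delta}}$ (in which coercivity is immediate from \eqref{m19b}): one must verify that the right-hand-side functional $f^{\hat\delta\delta}$, viewed on $(X^\delta, \|\cdot\|_X)$, is controlled by the data with a $\delta$-independent constant, and carefully track how the preconditioner constant $\kappa_\Delta$ enters. This is a routine stability argument but has to be spelled out to ensure the constant $C$ above is genuinely uniform in $\delta$ and $\tilde\delta$.
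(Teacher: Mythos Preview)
Your proposal is correct and follows essentially the same route as the paper: apply Proposition~\ref{m18} to the data $({}^{\delta\!}g,{}^{\delta\!}u_0)$, invoke Proposition~\ref{m22} for the equivalence $\|{}^{\delta\!}{\bf r}^{\,\udelta}_\delta\|\eqsim\|{}^{\delta\!}u-{}^{\delta\!}u^{\udelta\delta}\|_X$, bound the perturbations $\|{}^{\tilde\delta\!}u-{}^{\delta\!}u\|_X$ and $\|{}^{\tilde\delta\!}u^{\ubar{\tilde\delta}\tilde\delta}-{}^{\delta\!}u^{\ubar{\tilde\delta}\tilde\delta}\|_X$ by the data oscillation via the uniform boundedness of the continuous and discrete solution maps, and assemble by the triangle inequality. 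The paper records the discrete stability step without further comment, so your explicit discussion of that point is a welcome elaboration rather than a deviation.
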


\begin{proof} In \new{the introduced} notations, the statements of Propositions~\ref{m18} and \ref{m22} read
\begin{align} \nonumber
\|{}^{\delta\!}u-{}^{\delta\!}u^{\ubar{\tilde{\delta}} \tilde{\delta}}\|_X &\leq \bar{\rho} \|{}^{\delta\!}u-{}^{\delta\!}u^{\udelta\delta}\|_X,
\intertext{and} \label{55}
\|{}^{\delta\!}{\bf r}^{\,\udelta}_\delta\| &\eqsim
\|{}^{\delta\!}u-{}^{\delta\!}u^{\udelta\delta}\|_X
\end{align}
The proof is easily completed by
$$
\left.\begin{array}{@{}r@{}}
\|{}^{\tilde{\delta}\!}u-{}^{\delta\!}u\|_X\\
\|{}^{\tilde{\delta}\!}u^{\ubar{\tilde{\delta}} \tilde{\delta}}\!-\!{}^{\delta\!}u^{\ubar{\tilde{\delta}} \tilde{\delta}}\|_X
\end{array}\right\} \!\lesssim \|{}^{\tilde{\delta}\!}g\!-\!{}^{\delta\!}g\|_{Y'}\!+\!\|{}^{\tilde{\delta}\!}u_0\!-\!{}^{\delta\!}u_0\|_{H}
\leq 2 \hat{\omega} \|{}^{\delta\!}{\bf r}^{\,\udelta}_\delta\|\eqsim \hat{\omega} \|{}^{\delta\!}u\!-\!{}^{\delta\!}u^{\udelta\delta}\|_X.\!\!\!\!\qedhere
$$
\end{proof}

In view of the latter proposition, we make the following assumption.
\begin{assumption} \label{rhs}
We assume to have maps of the following types available:
  \begin{align*}
  &\Delta \rightarrow Y' \times H \colon \delta \mapsto ({}^{\delta\!}g,{}^{\delta\!}u_0) \in {}^{\delta\!}G \times {}^{\delta\!}U_0,\\
&\eta\colon \Delta \rightarrow \R \text{ such that } \|g-{}^{\delta\!}g\|_{Y'}+\|u_0-{}^{\delta\!}u_0\|_H \leq \eta(\delta), \text{ and } \eta(\tilde \delta)\leq \eta(\delta) \text{ when } \tilde{\delta} \succeq \delta, \\
&\R_{>0} \rightarrow \Delta\colon \eps \mapsto \delta(\eps) \text{ such that } \eta(\delta(\eps)) \leq \eps.
  \end{align*}
\end{assumption}
Notice that this in particular means that for any $\eps>0$ we are able to find a $\delta \in \Delta$ and $({}^{\delta\!}g,{}^{\delta\!}u_0) \in {}^{\delta\!}G \times {}^{\delta\!}U_0$ with $\|g-{}^{\delta\!}g\|_{Y'}+\|u_0-{}^{\delta\!}u_0\|_H\leq \eps$. A specification of a suitable family $({}^{\delta\!}G,{}^{\delta\!}U_0)_{\delta \in \Delta}$ will be given in Sect.~\ref{Sconcrete3}.

Given a $\delta \in \Delta$, \new{thinking} of $({}^{\delta\!}g,{}^{\delta\!}u_0)$ being a quasi-best approximation to $(g,u_0)$ from ${}^{\delta\!}G \times {}^{\delta\!}U_0$, the difference $(g,u_0)-({}^{\delta\!}g,{}^{\delta\!}u_0)$ is often referred to as \emph{data-oscillation}.

\subsection{A convergent algorithm}
In view of the statement \new{of} Proposition~\ref{m21}, in the following we will use the short-hand notations
$$
u^\delta={}^{\delta\!}u^{\udelta\delta},\quad {\bf r}^\delta={}^{\delta\!}{\bf r}^{\,\udelta}_\delta,
$$
i.e., $u^\delta$ is the solution of
\be \label{52}
\underbrace{{E^\delta_X}'(B' E^{\udelta}_Y K_Y^{\udelta} {E^{\udelta}_Y}' B+\gamma_0'\gamma_0)E^\delta_X}_{S^{\udelta \delta}=} u^\delta=\underbrace{{E^\delta_X}' (B' E^{\udelta}_Y K_Y^{\udelta} {E^{\udelta}_Y}' {}^{\delta\!}g+\gamma_0' {}^{\delta\!}u_0)}_{f^\delta={}^{\delta\!}f^{\,\udelta}_\delta:=}.
\ee
(cf.~\eqref{m9} and Sect.~\ref{Smod}), and
\be \label{53}
{\bf r}^\delta={E^{\udelta}_X}' \big[B' E^{\udelta}_Y K_Y^{\udelta} {E^{\udelta}_Y}' ( {}^{\delta\!}g-B u^\delta)+\gamma_0' ({}^{\delta\!}u_0-\gamma_0 u^\delta)\big](\Theta_\delta)\\
\ee
Instead of solving \eqref{52} exactly, we will allow it to be solved approximately with a sufficiently small relative tolerance by the application of an iterative method.
To that end, we assume to have available a
$K_X^{\delta}={K_X^{\delta}}' \in \Lis({X^{\delta}}',X^{\delta})$
for which both
\be \label{x28}
((K_X^{\delta})^{-1} w)(w) \eqsim \|w\|_X^2 \quad(w \in X^\delta)
\ee
(i.e.~$K_X^{\delta}$ is an optimal (self-adjoint and coercive) \emph{preconditioner} for $S^{\udelta \delta}$), and which can be applied at linear cost.
Besides for an efficient iterative solving of \eqref{52}, we will use this preconditioner to compute a quantity \new{$t^\delta$} that is equivalent to the $X$-norm of the algebraic error in \new{an approximation $\tilde{u}^\delta \in X^\delta$ to $u^\delta$.
The residual vector ${\bf \tilde{r}}^\delta$  corresponding to $\tilde{u}^\delta$ is defined as in \eqref{53} by replacing $u^\delta$ by $\tilde{u}^\delta$.}

\begin{algorithm} \label{54}
{\rm%
\begin{algotab}
\> \\
\> Let $\omega>0$, $\vartheta \in (0,1]$, $0 < \xi <1$ be constants, and let $\eps>0$.\\
\> $\delta:=\delta_{\rm init} \in \Delta$, $t_\delta \eqsim \|g\|_{Y'} +\|u_0\|_H$.\\
\> \texttt{do} \=\\
\>\> \texttt{do} \= compute $\tilde{u}^\delta \in X^\delta$ with $\tilde{t}_\delta:=\sqrt{(f^\delta-S^{\udelta \delta} \tilde{u}^\delta)(K_X^\delta (f^\delta-S^{\udelta \delta} \tilde{u}^\delta))} \leq \frac{t_\delta}{2}$; $t_\delta:=\tilde{t}_\delta$\\
\> \> \> \texttt{if} $e_\delta:=\|{\bf \tilde{r}}^\delta\|+\eta(\delta)+t_\delta \leq\eps$ \texttt{then} \texttt{stop} \texttt{endif}\\
\>\> \texttt{until} $t_\delta \leq \xi e_\delta$\\
\>\> \texttt{if} $\eta(\delta)>\omega \|{\bf \tilde{r}}^\delta\|$\quad\% \new{i.e.~data-oscillation dominates approximation error.}\\
\>\>\texttt{then} select $\tilde{\delta} \in \Delta$ s.t. $X^{\tilde \delta}\supseteq X^\delta$ is (a near-smallest) space such that $\eta(\tilde{\delta})\leq \eta(\delta)/2$.\\
\>\>\texttt{else} \= determine $\delta \preceq \tilde{\delta} \preceq \udelta$ s.t.~$X^{\tilde \delta}$ is (a near- smallest) space that for a $J \subseteq I_\delta^{\udelta}$\\
\>\>\> contains $X^\delta+\Span \Theta_\delta|_J$ where  $\|{\bf \tilde{r}}^\delta|_{J}\|\geq \vartheta \|{\bf \tilde{r}}^\delta\|$.\\
\>\>\texttt{endif} \\
\>\> $t_{\tilde{\delta}}:=e_\delta$, $\delta:=\tilde{\delta}$\\
\>\texttt{enddo}
\end{algotab}}
\end{algorithm}

\begin{theorem} \label{thm:main}
Assume \eqref{m17}, and let the constants $\gamma_\Delta$ and $\kappa_{\Delta}$ be as defined in \eqref{x21} and \eqref{kappadelta}, respectively.
  For constants $\vartheta$, $\omega/\vartheta$, $\xi/\vartheta$, $(\frac{\kappa_\Delta}{\gamma_\Delta}-1)/\omega$ \new{sufficiently small}, with additionally $\omega$ and $\frac{\kappa_\Delta}{\gamma_\Delta}-1$ sufficiently small dependent on $\vartheta$ with $\max(\frac{\kappa_\Delta}{\gamma_\Delta}-1,\omega) \downarrow 0$ when $\vartheta \downarrow 0$,
there exists a constant $\breve{\rho}<1$ such that
between any two successive passings of the until-clause the value of $\vartheta \|u-\tilde{u}^\delta\|_X+\eta(\delta)$ decreases with at least a factor $\breve{\rho}$.
For any $\eps>0$, Algorithm~\ref{54} terminates, and at termination it holds that $\|u-\tilde{u}^\delta\|_X+\eta(\delta) \lesssim \eps$.\comment{Remark deleted.}
\end{theorem}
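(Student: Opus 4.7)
The plan is to introduce the combined quantity $Q(\delta):=\vartheta\|u-\tilde{u}^\delta\|_X+\eta(\delta)$ and to prove two facts: (i) after the until-clause $e_\delta \eqsim \|u-\tilde{u}^\delta\|_X+\eta(\delta)$ (reliability and efficiency of $e_\delta$), and (ii) between two successive exits of the until-loop, $Q(\tilde\delta)\leq\breve{\rho}\,Q(\delta)$ for some $\breve\rho<1$. From these two facts the theorem follows: $Q$ decays $r$-linearly, hence so does $e_\delta$, so the stopping criterion $e_\delta\leq\eps$ is met after finitely many outer iterations, and (i) then yields $\|u-\tilde{u}^\delta\|_X+\eta(\delta)\lesssim\eps$ at termination.

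For (i), I would use that $K_X^\delta$ is an optimal preconditioner for $S^{\udelta\delta}$, making $t_\delta$ equivalent to the algebraic error $\|u^\delta-\tilde{u}^\delta\|_X$, together with the $X$-continuity of the residual map in \eqref{53} giving $\|\tilde{\mathbf r}^\delta-\mathbf{r}^\delta\|\lesssim t_\delta$. Combining Proposition~\ref{m22} applied to $\mathbf{r}^\delta$, the triangle inequality, and $\|u-{}^{\delta\!}u\|_X\leq\eta(\delta)$, yields $\|u-\tilde{u}^\delta\|_X\lesssim\|\tilde{\mathbf r}^\delta\|+\eta(\delta)+t_\delta=e_\delta$; the reverse direction is analogous, and the until-condition $t_\delta\leq\xi e_\delta$ lets one absorb the $t_\delta$-term for $\xi$ small.

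For (ii) the analysis splits along the algorithm's two branches. In Case~B ($\eta(\delta)\leq\omega\|\tilde{\mathbf r}^\delta\|$) the until-condition gives $t_\delta\lesssim\xi(1+\omega^{-1})\|\tilde{\mathbf r}^\delta\|$, so once $\xi/\vartheta$ is sufficiently small the D\"orfler inequality on $\tilde{\mathbf r}^\delta$ transfers to $\mathbf{r}^\delta$ with a slightly smaller parameter, and the side-condition $\eta(\delta)+\eta(\tilde\delta)\lesssim\omega\|\mathbf r^\delta\|$ of Proposition~\ref{m21} is met once $\omega$ is comparable to the $\hat\omega$ required there. Proposition~\ref{m21} then delivers a contraction of the idealized error, which after passing back to $\|u-\tilde{u}^{\tilde\delta}\|_X$ via triangle, data oscillation and algebraic error gives $Q(\tilde\delta)\leq\breve\rho_B\,Q(\delta)$. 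In Case~A ($\eta(\delta)>\omega\|\tilde{\mathbf r}^\delta\|$) the efficiency half of (i) yields $\|u-\tilde{u}^\delta\|_X\lesssim\omega^{-1}\eta(\delta)$, so $R:=\vartheta\|u-\tilde{u}^\delta\|_X/\eta(\delta)\lesssim\vartheta/\omega$. Since $X^{\tilde\delta}\supseteq X^\delta$, the quasi-optimality bound \eqref{m6b}, combined with data oscillation and $\eta(\tilde\delta)\leq\eta(\delta)/2$, gives $\|u-\tilde{u}^{\tilde\delta}\|_X\leq(\kappa_\Delta/\gamma_\Delta)\|u-\tilde{u}^\delta\|_X+C\eta(\delta)+\text{small}$, hence
\[
\frac{Q(\tilde\delta)}{Q(\delta)} \leq \frac{(\kappa_\Delta/\gamma_\Delta)\,R+\tfrac12+O(\vartheta)}{R+1};
\]
since $R\leq C\vartheta/\omega$ is bounded, this ratio stays strictly below~$1$ once $(\kappa_\Delta/\gamma_\Delta-1)/\omega$ is small enough. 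Setting $\breve\rho:=\max(\breve\rho_A,\breve\rho_B)<1$ completes (ii).

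The inner loop terminates because the preconditioned iteration halves $t_\delta$ each step while $e_\delta\geq\eta(\delta)+\|\tilde{\mathbf r}^\delta\|$ stays bounded away from zero as long as $u\neq\tilde{u}^\delta$; the outer loop then terminates by (ii). The main obstacle is Case~A: this branch does not itself reduce the discretization error, and the best one can say is that the error grows by no more than the quasi-optimality constant $\kappa_\Delta/\gamma_\Delta$ under $X^\delta\hookrightarrow X^{\tilde\delta}$. Making this blow-up factor close enough to~$1$ is precisely the role of $(\kappa_\Delta/\gamma_\Delta-1)/\omega$ sufficiently small, and the interplay between this condition and the smallness of $\vartheta$, $\omega/\vartheta$ and $\xi/\vartheta$ has to be carefully orchestrated so that both branches produce a \emph{common} contraction factor $\breve\rho<1$.
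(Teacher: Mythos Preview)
Your proposal is correct and follows essentially the same route as the paper: the same case split along the branch $\eta(\delta)\gtrless\omega\|\tilde{\mathbf r}^\delta\|$, the same use of the preconditioner equivalence $t_\delta\eqsim\|u^\delta-\tilde u^\delta\|_X$ and of $\|\tilde{\mathbf r}^\delta-\mathbf r^\delta\|\lesssim t_\delta$ to transfer the D\"orfler condition to the exact residual, Proposition~\ref{m21} for the residual-dominant branch, and the quasi-optimality bound \eqref{m6b} together with $\eta(\tilde\delta)\leq\eta(\delta)/2$ for the data-oscillation branch. The paper organizes Case~A via an additive splitting (introducing an auxiliary $\rho_1<1$ and requiring $2\vartheta C+\tfrac12<1$) rather than your ratio $Q(\tilde\delta)/Q(\delta)$, but the two formulations are equivalent; note only that your phrase ``$R\le C\vartheta/\omega$ is bounded'' is slightly imprecise, since $\vartheta/\omega$ is large---what actually matters, and what you use, is that $(\kappa_\Delta/\gamma_\Delta-1)R\le C\vartheta\cdot(\kappa_\Delta/\gamma_\Delta-1)/\omega$ is small.
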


\begin{proof}
  \new{Replacing} $w$ by $K_X^\delta S^{\udelta \delta} w$ in~\eqref{x28} \new{yields} $(S^{\udelta \delta} v)(\!K_X^\delta S^{\udelta \delta} v) \!\eqsim\! \|v\|_{\!X}^2$ for $v \!\in\! X^\delta$, \new{so}
\be \label{x7}
\|u^\delta-\tilde{u}^\delta\|_X^2 \eqsim (f^\delta-S^{\udelta \delta} \tilde{u}^\delta)(K_X^\delta (f^\delta-S^{\udelta \delta} \tilde{u}^\delta)).
\ee

From \eqref{m19b} and \eqref{m17}, we have
\be \label{eqn:residual-alg}
\|{\bf r}^\delta\!-\!{\bf \tilde{r}}^\delta\|=\hspace*{-0.8em}\sup_{0 \neq {\bf c} \in \R^{\# J^\delta}} \hspace*{-0.8em}\frac{(S^{\udelta \udelta}(\tilde{u}^\delta-u^\delta))({\bf c}^\top \Theta_\delta)}{\|{\bf c}\|} \leq \tfrac{\kappa_\Delta^{\frac12}}{m} \|u^\delta-\tilde{u}^\delta\|_{X^{\udelta \udelta}} \leq \tfrac{\kappa_\Delta}{m} \|u^\delta-\tilde{u}^\delta\|_X.
\ee

If the algorithm stops, then
\begin{align*}
\|u-\tilde{u}^\delta\|_{X} &\leq \|u-{}^{\delta\!}u\|_{X} +\|{}^{\delta\!}u-u^\delta\|_X+\|u^\delta-\tilde{u}^\delta\|_X\\
&\lesssim
  \eta(\delta) +\|{}^{\delta\!}u-u^\delta\|_X+\|u^\delta-\tilde{u}^\delta\|_X \quad\text{(by Thm.~\ref{thm0} \& Ass.~\ref{rhs})}\\
  &\stackrel{\makebox[0pt]{$\scriptstyle \eqref{55}$}}{\eqsim}
 \eta(\delta)+\|{\bf r}^\delta\|+\|u^\delta-\tilde{u}^\delta\|_X \leq \eta(\delta)+\|{\bf \tilde{r}}^\delta\|+\|{\bf r}^\delta-{\bf \tilde{r}}^\delta\|+\|u^\delta-\tilde{u}^\delta\|_X \\
&\stackrel{\makebox[0pt]{$\scriptstyle \eqref{eqn:residual-alg}$}}{\lesssim}  \eta(\delta)+\|{\bf \tilde{r}}^\delta\|+\|u^\delta-\tilde{u}^\delta\|_X
\stackrel{\makebox[0pt]{$\scriptstyle \eqref{x7}$}}{\lesssim}
 \eta(\delta)+\|{\bf \tilde{r}}^\delta\|+t_\delta \leq \eps.
\end{align*}

The inner do-loop always terminates either by passing the until-clause or by the stop-statement. Indeed, inside this loop
the value of $t_\delta$ is driven to $0$, so that $\|{\bf \tilde{r}}^\delta\|+\eta(\delta)$ tends to $\|{\bf r}^\delta\|+\eta(\delta)$.
So if $\|{\bf r}^\delta\|+\eta(\delta) \neq 0$, then at some moment $t_\delta \leq \xi(\|{\bf \tilde{r}}^\delta\|+\eta(\delta) +t_\delta)$, whereas if $\|{\bf r}^\delta\|+\eta(\delta) =0$ then at some moment $e_\delta  \leq \eps$.

When passing the until-clause, it holds that $t_\delta \leq \xi(\|{\bf \tilde{r}}^\delta \|+\eta(\delta)+t_\delta)$, and so by using $\xi<1$ kicking back $t_\delta$,
\begin{align} \label{x1} 
t_\delta &\lesssim \xi(\|{\bf \tilde{r}}^\delta \|+\eta(\delta))\\ \nonumber
&\leq \xi(\|{\bf r}^\delta\|+\|{\bf \tilde{r}}^\delta-{\bf r}^\delta\|+\eta(\delta))\\\nonumber
&\lesssim \xi(\|{}^{\delta\!}u -u^\delta\|_X+\|\tilde{u}^\delta-u^\delta\|_X+\eta(\delta))\\\nonumber
&\lesssim \xi(\|u -u^\delta\|_X+t_\delta+\eta(\delta))
\end{align}
Taking $\xi$ small enough and kicking back $t_\delta$, we obtain
$t_\delta \lesssim \xi (\|u -u^\delta\|_X+\eta(\delta)),
$
and similarly
\begin{align} \label{x3} 
t_\delta &\lesssim \xi (\|u -\tilde{u}^\delta\|_X+\eta(\delta)),\\ \label{x13}
t_\delta &\lesssim \xi (\|{}^{\delta\!}u -u^\delta\|_X+\eta(\delta)).
\end{align}
When passing the until-clause, furthermore we have
\be \label{x2} 
\begin{split}
\hspace*{-2em}\|u-\tilde{u}^\delta\|_X &\lesssim t_\delta+\|u-u^\delta\|_X \lesssim  t_\delta+\|{}^{\delta\!}u-u^\delta\|_X+\eta(\delta) \eqsim
t_\delta+\|{\bf r}^\delta\|+\eta(\delta)\\
& \leq
t_\delta+\|{\bf r}^\delta-{\bf \tilde{r}}^\delta\|+\|{\bf \tilde{r}}^\delta\|+\eta(\delta)
\lesssim t_\delta+\|u^\delta-\tilde{u}^\delta\|_X+\|{\bf \tilde{r}}^\delta\|+\eta(\delta)\\
&\lesssim t_\delta+\|{\bf \tilde{r}}^\delta\|+\eta(\delta)
\stackrel{\eqref{x1}}{\lesssim} \|{\bf \tilde{r}}^\delta\|+\eta(\delta).
\end{split}
\ee

Denoting $\delta$ at the subsequent passing of the until-clause as $\tilde{\delta}$, we have
\begin{align*}
\hspace*{-2em}\|u-\tilde{u}^{\tilde \delta}\|_X & \lesssim t_{\tilde \delta}+\|{}^{\tilde \delta\!}u -u^{\tilde \delta}\|_X+\eta(\tilde \delta)\\
 & \stackrel{\makebox[0pt]{$\scriptstyle \eqref{m6b}$}}{\leq} t_{\tilde \delta}+\frac{\kappa_\Delta}{\gamma_\Delta} \|{}^{\tilde \delta\!}u -{}^{\tilde \delta\!}u_{\tilde \delta}\|_X+\eta(\tilde \delta)\\
 & \stackrel{\makebox[0pt]{$\scriptstyle \delta \preceq \tilde{\delta}$}}{\leq} t_{\tilde \delta}+\frac{\kappa_\Delta}{\gamma_\Delta} \|{}^{\tilde \delta\!}u -\tilde{u}^\delta\|_X+\eta(\tilde \delta)\\
  & \leq t_{\tilde \delta}+\frac{\kappa_\Delta}{\gamma_\Delta} \|u -\tilde{u}^\delta\|_X+{\mathcal O}(\eta(\tilde \delta)).
\end{align*}
By using $t_{\tilde \delta} \lesssim \xi (\|u -\tilde{u}^{\tilde \delta}\|_X+\eta(\tilde \delta))$ (\eqref{x3}) and kicking back $\|u -\tilde{u}^{\tilde \delta}\|_X$, we infer that for $\xi$ sufficiently small,
\be \label{x5}
\|u-\tilde{u}^{\tilde \delta}\|_X \leq (\frac{\kappa_\Delta}{\gamma_\Delta}+{\mathcal O}(\xi)) \|u -\tilde{u}^\delta\|_X +{\mathcal O}(\eta(\tilde \delta)).
\ee

In the case that
\be \label{x4}
\eta(\delta) > \omega\|{\bf \tilde{r}}^\delta\|,
\ee
it holds that
$$
\eta(\tilde \delta) \leq \eta(\delta)/2,
$$
and so, thanks to \eqref{x2} and \eqref{x4},
\be \label{x15}
\|u-\tilde{u}^\delta\|_X \lesssim \eta(\delta)/\omega.
\ee
For any constant $\rho_1<1$, using \eqref{x5} and \eqref{x15} we have
\begin{align*}
\|u-\tilde{u}^{\tilde \delta}\|_X &\leq \rho_1\|u -\tilde{u}^\delta\|_X +{\textstyle \frac{\kappa_\Delta / \gamma_\Delta+{\mathcal O}(\xi) - \rho_1}{\omega}} \omega \|u -\tilde{u}^\delta\|_X+{\mathcal O}(\eta(\tilde \delta))\\
&\leq \rho_1\|u -\tilde{u}^\delta\|_X +\big({\textstyle \frac{\kappa_\Delta / \gamma_\Delta+{\mathcal O}(\xi) - \rho_1}{\omega}}+1\big)C \eta(\delta),
\end{align*}
for some constant $C>0$,
and so
$$
\vartheta \|u-\tilde{u}^{\tilde \delta}\|_X +\eta(\tilde \delta)
\leq \rho_1 \vartheta \|u -\tilde{u}^\delta\|_X +\Big(\vartheta \big({\textstyle \frac{\kappa_\Delta / \gamma_\Delta-\rho_1+{\mathcal O}(\xi)}{\omega}}+1\big)C +{\textstyle \frac12}\Big)\eta(\delta)
$$
Now let $\vartheta>0$ be such that $2 \vartheta C+\frac12<1$.
Given a constant $\omega$ (which later will be selected such that $\omega/\vartheta$ is sufficiently small),
let $(\frac{\kappa_\Delta}{\gamma_\Delta}-1)/\omega$, $(1-\rho_1)/\omega$, $\xi/\omega$ be sufficiently small such that the expression $\frac{\kappa_\Delta / \gamma_\Delta-\rho_1+{\mathcal O}(\xi)}{\omega}\leq 1$.
We conclude that in this case $\vartheta \|u-\tilde{u}^{\delta}\|_X +\eta(\delta)$ is reduced by at least a factor $\max(\rho_1,2\vartheta C+\frac12)<1$.
\bigskip

Next we consider the other case that
\be \label{x88}
\eta(\delta) \leq \omega\|{\bf \tilde{r}}^\delta\|,
\ee
so that
\be \label{x6}
\|{\bf \tilde{r}}^\delta|_{J}\| \geq \vartheta \|{\bf \tilde{r}}^\delta\|.
\ee
We have
\begin{align} \label{x11} 
\|{\bf r}^\delta-{\bf \tilde{r}}^\delta\| & \lesssim \|u^\delta-\tilde{u}^\delta\|_X \lesssim t_\delta
\stackrel{\makebox[0pt]{$\scriptstyle  \eqref{x1}$}}{\leq} \xi (\|{\bf \tilde{r}}^\delta\|+\eta(\delta))
\stackrel{\makebox[0pt]{$\scriptstyle  \eqref{x88}$}}{\lesssim} \xi \|{\bf \tilde{r}}^\delta\| \\ \nonumber
& \leq  \xi (\|{\bf r}^\delta\|+\|{\bf r}^\delta-{\bf \tilde{r}}^\delta\|),
\end{align}
and so by taking $\xi$ sufficiently small and kicking back $\|{\bf r}^\delta-{\bf \tilde{r}}^\delta\|$, also
\be
 \label{x9} 
\|{\bf r}^\delta-{\bf \tilde{r}}^\delta\| \lesssim  \xi \|{\bf r}^\delta\|,
\ee
which together with \eqref{x88} implies that
\be \label{x8}
\eta(\delta) \lesssim \omega\|{\bf r}^\delta\|.
\ee
From \eqref{x6}--\eqref{x9} we infer that
$$
\|{\bf r}^\delta\| \stackrel{\makebox[0pt]{$\scriptstyle  \eqref{x11}$}}{\lesssim} \|{\bf \tilde{r}}^\delta\| \stackrel{\makebox[0pt]{$\scriptstyle  \eqref{x6}$}}{\lesssim}
\|{\bf \tilde{r}}^\delta|_{J}\| \leq \|{\bf r}^\delta|_{J}\|+\|{\bf r}^\delta-{\bf \tilde{r}}^\delta\|
\stackrel{\makebox[0pt]{$\scriptstyle  \eqref{x9}$}}{\lesssim}
\|{\bf r}^\delta|_{J}\|+\xi \|{\bf r}^\delta\|.
$$
By taking $\xi$ sufficiently small and kicking back $\|{\bf r}^\delta\|$, we conclude that
there exists a constant $\tilde{\vartheta}>0$ such that
$$
\|{\bf r}^\delta|_{J}\| \geq \tilde{\vartheta} \|{\bf r}^\delta\|.
$$

Assuming that $\frac{\kappa_\Delta}{\gamma_\Delta}-1$ and $\omega$ are small enough, using \eqref{x8}
an application of Proposition~\ref{m21} shows that there exists a constant
$\rho_3<1$ such that
\be \label{x17}
\|{}^{\tilde \delta\!}u - u^{\tilde \delta}\|_X \leq \rho_3 \|{}^{\delta\!}u - u^{\delta}\|_X.
\ee

Furthermore we have
\begin{align*}
\|{\bf \tilde{r}}^\delta\| & \leq \|{\bf r}^\delta\| +\|{\bf \tilde{r}}^\delta-{\bf r}^\delta\| \lesssim \|{}^{\delta\!}u - u^{\delta}\|_X +t_\delta \lesssim \|u-\tilde{u}^\delta\|_X+\eta(\delta)+t_\delta\\
&\stackrel{\makebox[0pt]{$\scriptstyle  \eqref{x3}$}}{\lesssim} \|u-\tilde{u}^\delta\|_X+\eta(\delta),
\end{align*}
and so from \eqref{x88} by kicking back $\eta(\delta)$ and taking $\omega$ sufficiently small,
\be \label{x10}
\eta(\delta) \lesssim \omega\|u-\tilde{u}^\delta\|_X.
\ee
We conclude that
\begin{align*}
\vartheta \|u-\tilde{u}^{\tilde{\delta}}\|_X+\eta(\tilde \delta) &\leq \vartheta \|{}^{\tilde \delta\!}u-u^{\tilde{\delta}}\|_X+{\mathcal O}(\eta(\tilde \delta)+t_{\tilde \delta})\\
&\stackrel{\makebox[0pt]{$\scriptstyle  \eqref{x13}$}}{\leq}
(\vartheta +{\mathcal O}(\xi)) \|{}^{\tilde \delta\!}u-u^{\tilde{\delta}}\|_X +{\mathcal O}(\eta(\tilde \delta))\\
&\stackrel{\makebox[0pt]{$\scriptstyle  \eqref{x17}$}}{\leq} (\vartheta +{\mathcal O}(\xi)) \rho_3  \|{}^{\delta\!}u-u^{\delta}\|_X+{\mathcal O}(\eta(\tilde \delta))\\
&\leq (\vartheta +{\mathcal O}(\xi)) \rho_3  \big[\|u-\tilde{u}^{\delta}\|_X+{\mathcal O}(\eta(\delta))+t_\delta\big]+{\mathcal O}(\eta(\tilde \delta))\\
& \stackrel{\makebox[0pt]{$\scriptstyle  \eqref{x3}$}}{\leq} (\vartheta +{\mathcal O}(\xi)) \rho_3  \big[(1+{\mathcal O}(\xi))\|u-\tilde{u}^{\delta}\|_X+{\mathcal O}(\eta(\delta))\big]+{\mathcal O}(\eta(\delta))\\
& \stackrel{\makebox[0pt]{$\scriptstyle  \eqref{x10}$}}{\leq}\big[(\vartheta +{\mathcal O}(\xi)) \rho_3 (1+{\mathcal O}(\xi))+{\mathcal O}(\omega)\big]\|u-\tilde{u}^{\delta}\|_X\\
& = \big[\rho_3 +{\mathcal O}({\textstyle \frac{\omega+\xi}{\vartheta}})\big] \vartheta \|u-\tilde{u}^{\delta}\|_X.
\end{align*}
So for $\omega/\vartheta$ and $\xi/\vartheta$ sufficiently small, also in this case we established  a reduction of $\vartheta \|u-\tilde{u}^{\tilde{\delta}}\|_X+\eta(\tilde \delta)$
by at least a constant factor less than $1$.
\bigskip

\new{It} is left to show that the algorithm terminates.
We \new{showed} that the value of $\vartheta \|u-\tilde{u}^\delta\|_X+\eta(\delta)$ at passing the until-clause is $r$-linearly converging.
We consider the corresponding value of $e_\delta=\|{\bf \tilde{r}}^\delta\|+\eta(\delta)+t_\delta$. \new{Arguments used} multiple times show that
$\|{\bf \tilde{r}}^\delta\| \lesssim \|u-\tilde{u}^\delta\|_X+\eta(\delta)+t_\delta$, and so
$e_\delta \lesssim \|u-\tilde{u}^\delta\|_X+\eta(\delta)+t_\delta$.
Using that $t_\delta \leq \xi(\|{\bf \tilde{r}}^\delta\|+\eta(\delta))\leq \xi e_\delta$, for $\xi$ sufficiently small kicking back $e_\delta$ shows that
$$
e_\delta \lesssim \|u-\tilde{u}^\delta\|_X+ \eta(\delta),
$$
and so
$$
e_\delta \lesssim \vartheta \|u-\tilde{u}^\delta\|_X+ \eta(\delta).
$$
\new{Therefore}, at some moment $e_\delta \leq \eps$, meaning that the algorithm stops.
\end{proof}



\section{\new{Wavelets-in-time} tensorized with finite-elements-in-space} \label{Srealization}
We specify the parabolic problem at hand, as well as the type of families $(X^\delta)_{\delta \in \Delta}$ and $(Y^\delta)_{\delta \in \Delta}$ of `trial' and `test' spaces.
\new{The specification of} a likely harmless minor further restriction to these families that will be needed for the construction of an $X$-stable collection $\Theta_\delta$ that spans an $X$-stable complement space of $X^\delta$ in $X^{\udelta}$, specifically condition \eqref{m17}, will be postponed to Sect.~\ref{Smapping}.

\subsection{Continuous problem}
For some bounded domain $\Omega \subset \R^d$, we take $H=L_2(\Omega)$ and, for some closed \new{$\Gamma_D \subseteq \partial\Omega$}, $V=H^1_{0,\Gamma_D}(\Omega):=\clos_{H^1(\Omega)}\{u \in C^\infty(\Omega) \cap H^1(\Omega)\colon u|_{\Gamma_D}=0\}$,
and
$$
a(t;\eta,\zeta):=\int_\Omega {\bf K} \nabla \eta \cdot \nabla \zeta +c \eta \zeta \,d{\bf x},
$$
where ${\bf K}={\bf K}^\top \in L_\infty(I \times \Omega)$ with ${\bf K}(\cdot) \eqsim \identity$ a.e., $c \in L_\infty(I \times \Omega)$ \new{with $\essinf c \geq 0$, and $|\Gamma_D|>0$ when the latter $\essinf$ is zero}.
W.l.o.g.~we take $T=1$.

\subsection{Wavelets in time} \label{Swavsintime}
We will construct the trial and test spaces as the span of wavelets-in-time tensorized with finite element spaces-in-space. In this subsection we collect some assumptions on the wavelets.

At the \emph{`trial side'} we consider a countable collection $\Sigma=\{\sigma_\lambda\colon \lambda \in \vee_\Sigma\}$ of functions $I \rightarrow \R$ known as \emph{wavelets}.
To each $\lambda \in \vee_\Sigma$ we associate a value $|\lambda| \in \N_0$, called the \emph{level} of $\lambda$.
We assume that the wavelets are \emph{locally supported} meaning that
$\sup_{n \in \N,\,\ell \in \N_0} \#\{\lambda \in \vee_\Sigma \colon |\lambda|=\ell,\,|\supp \sigma_\lambda \cap 2^{-\ell}(n+[0,1])|>0\} <\infty$ and $\diam \supp \sigma_\lambda \lesssim 2^{-|\lambda|}$.
To each $\lambda \in \vee_\Sigma$ with $|\lambda| >0$, we associate one or more $\tilde{\lambda} \in \vee_\Sigma$ with $|\tilde{\lambda}|=|\lambda|-1$ and $|\supp \sigma_\lambda \cap \supp \sigma_{\tilde{\lambda}}|>0$ which we call the \emph{parent(s)} of $\lambda$.
We denote this relation between a parent $\tilde{\lambda}$ and a child $\lambda$ by
$$
\tilde{\lambda} \triangleleft_\Sigma \lambda.
$$
The definitions of parents and children give rise to obvious notions of ancestors and descendants.

To each $\lambda \in \vee_\Sigma$ we associate some neighbourhood $\cS_\Sigma(\lambda)$ of $\supp \sigma_\lambda$ with $\diam\cS_\Sigma(\lambda) \lesssim 2^{-|\lambda|}$ and
$$
\tilde{\lambda} \triangleleft_\Sigma \lambda \Longrightarrow \cS_\Sigma(\tilde{\lambda}) \supseteq\cS_\Sigma(\lambda).
$$
For some wavelets bases, e.g.~Alpert wavelets (\cite{10}), it suffices to take $\cS_\Sigma(\lambda)=\supp \sigma_\lambda$. With $C_\Sigma:=\sup_{\lambda \in \vee_\Sigma} 2^{|\lambda|} \diam \supp \sigma_\lambda$,
a neighbourhood that in any case is sufficiently large is
$\{t \in I \colon \dist(t,\supp \sigma_\lambda) \leq C_\Sigma 2^{-|\lambda|}\}$.
Indeed, if with this definition $t \in \cS_\Sigma(\lambda)$ and $\tilde{\lambda} \triangleleft_\Sigma \lambda$, then $\dist(t,\supp \sigma_{\tilde{\lambda}}) \leq \dist(t,\supp \sigma_{\lambda})+\diam(\supp \sigma_{\lambda}) \leq 2 C_\Sigma 2^{-|\lambda|}$, i.e.~$t \in \cS_\Sigma(\tilde{\lambda})$.

We assume that $\Sigma$ is a Riesz basis for $L_2(I)$, and, when renormalized in $H^1(I)$-norm, it is a Riesz basis for $H^1(I)$.
Although not essential, thinking of wavelets being (essentially) constructed by means of \emph{dilation}, we assume that
$$
\|\sigma_\lambda\|_{H^1(I)} \eqsim 2^{|\lambda|}.
 $$

At the \emph{`test side'} we consider a similar collection $\Psi=\{\psi_\mu\colon \mu \in \vee_\Psi\}$ of wavelets, with the difference though that this one has to be an even \emph{orthonormal} basis for $L_2(\Omega)$, \new{whilst these wavelets do not need to be in $H^1(I)$.}

We will assume that $\Sigma$ and $\Psi$ are selected such that for any $\ell \in N_0$,
$$
\Span\{\sigma_\lambda\colon |\lambda|\leq \ell \} \cup \Span\{\sigma'_\lambda\colon |\lambda|\leq \ell\} \subseteq \Span\{\psi_\mu\colon |\mu|\leq \ell\},
$$
so that in particular
\be \label{x16}
|\mu|>|\lambda| \Longrightarrow \langle \sigma_\lambda, \psi_\mu\rangle_{L_2(I)}=0=\langle \sigma'_\lambda, \psi_\mu\rangle_{L_2(I)}.
\ee

\subsection{Uniform stability}
In the following proposition, we further specify the type of families of trial and test spaces that we consider, and formulate sufficient conditions for the requirements \eqref{x20}--\eqref{x21}, which implied
uniform stability of the Galerkin discretizations of our saddle-point problem \eqref{m1}.

\begin{proposition} \label{prop1} For $\delta \in \Delta$, let
$$
X^\delta=\sum_{\lambda \in \vee_\Sigma} \sigma_\lambda \otimes W_\lambda^\delta,\quad
Y^\delta=\sum_{\mu \in \vee_\Psi} \psi_\mu \otimes V_\mu^\delta
$$
for subspaces $W_\lambda^\delta, V_\mu^\delta \subseteq V$ of which finitely many are non-zero.
\new{Assume}
\begin{align} \label{x18}
& \langle \sigma_\lambda,\psi_\mu \rangle_{L_2(I)} \neq 0 \Longrightarrow
V_\mu^\delta \supseteq W_\lambda^\delta,
\intertext{and, for some constant $\gamma_\Delta>0$, for any $\mu \in \vee_\Psi$,} \label{x19}
& \inf_{0 \neq w \in \sum\limits_{\{\lambda \in \vee_\Sigma\colon \langle \sigma_\lambda',\psi_\mu \rangle_{L_2(I)} \neq 0\}}W_\lambda^\delta}  \,\,\sup_{0 \neq v \in V_\mu^\delta} \frac{w(v)}{\|w\|_{V'} \|v\|_{V}} \geq \gamma_\Delta.
\end{align}
Then $X^\delta \subseteq Y^\delta$ and
$$
\inf_{0 \neq w \in X^\delta} \sup_{0 \neq v \in Y^\delta} \frac{(\partial_t w)(v)}{\|\partial_t w\|_{Y'} \|v\|_{Y}} \geq \gamma_\Delta,
$$
i.e., the conditions \eqref{x20}--\eqref{x21} for uniform stability are satisfied.
\end{proposition}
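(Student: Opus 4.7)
The plan is to exploit the $L_2(I)$-orthonormality of $\Psi$ to decouple the inf-sup over $X^\delta \times Y^\delta$ into independent, $\mu$-indexed inf-sups which are exactly the hypothesis~\eqref{x19}. The inclusion $X^\delta \subseteq Y^\delta$ comes for free: for $w_\lambda \in W_\lambda^\delta$, expand $\sigma_\lambda \otimes w_\lambda = \sum_\mu \langle \sigma_\lambda,\psi_\mu\rangle_{L_2(I)}\, \psi_\mu \otimes w_\lambda$ in the $L_2(I)$-ONB $\Psi$; in each nonzero term one has $w_\lambda \in W_\lambda^\delta \subseteq V_\mu^\delta$ by~\eqref{x18}, so the term lies in $\psi_\mu \otimes V_\mu^\delta \subseteq Y^\delta$, and only finitely many terms are nonzero.

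For the inf-sup, let $w = \sum_\lambda \sigma_\lambda \otimes w_\lambda \in X^\delta$. Expanding $\sigma_\lambda' \in L_2(I)$ in the ONB $\Psi$ and reordering,
\[
\partial_t w = \sum_\mu \psi_\mu \otimes q_\mu, \qquad q_\mu := \sum_{\lambda} \langle \sigma_\lambda',\psi_\mu\rangle_{L_2(I)}\, w_\lambda,
\]
where the effective summation is only over $\lambda$ with $\langle \sigma_\lambda',\psi_\mu\rangle_{L_2(I)} \neq 0$. Because $\Psi$ is $L_2(I)$-orthonormal, Parseval in $Y' = L_2(I;V')$ yields $\|\partial_t w\|_{Y'}^2 = \sum_\mu \|q_\mu\|_{V'}^2$, and each $q_\mu$ lies precisely in the subspace $\sum_{\{\lambda:\langle \sigma_\lambda',\psi_\mu\rangle \neq 0\}} W_\lambda^\delta$ on which~\eqref{x19} supplies a uniform inf-sup constant $\gamma_\Delta$.

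Applying~\eqref{x19} for each $\mu$ with $q_\mu \neq 0$ produces $v_\mu \in V_\mu^\delta$ with $q_\mu(v_\mu) \geq \gamma_\Delta \|q_\mu\|_{V'}\|v_\mu\|_V$, which I rescale so that $\|v_\mu\|_V = \|q_\mu\|_{V'}$ (and set $v_\mu := 0$ otherwise). Then $v := \sum_\mu \psi_\mu \otimes v_\mu \in Y^\delta$ satisfies $\|v\|_Y^2 = \sum_\mu \|v_\mu\|_V^2 = \|\partial_t w\|_{Y'}^2$ by Parseval, and the $L_2(I)$-orthonormality of $\Psi$ collapses the double sum in the duality pairing to the diagonal
\[
(\partial_t w)(v) = \sum_\mu q_\mu(v_\mu) \geq \gamma_\Delta \sum_\mu \|q_\mu\|_{V'}^2 = \gamma_\Delta \|\partial_t w\|_{Y'} \|v\|_Y,
\]
which is the desired inf-sup bound. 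The only delicate points are the two Parseval identities, which rest on the Hilbert tensor-product structure $L_2(I;E) \cong L_2(I) \otimes E$ for $E \in \{V, V'\}$ combined with the $L_2(I)$-orthonormality of $\Psi$; these are standard, so I expect no real obstacle beyond the bookkeeping of the $\mu$-wise decoupling.
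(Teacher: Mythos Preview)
Your proof is essentially identical to the paper's: both expand in the orthonormal basis $\Psi$ to decouple into $\mu$-indexed problems, apply~\eqref{x19} componentwise with the normalization $\|v_\mu\|_V = \|q_\mu\|_{V'}$, and reassemble via Parseval. The only minor slip is that the supremum in~\eqref{x19} need not be attained (the $V_\mu^\delta$ are merely closed subspaces of $V$), so one should choose $v_\mu$ with $q_\mu(v_\mu) \geq (\gamma_\Delta - \eps)\|q_\mu\|_{V'}\|v_\mu\|_V$ for arbitrary $\eps>0$ and let $\eps \downarrow 0$ at the end, exactly as the paper does.
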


\begin{proof}
For $w_\lambda \in W_\lambda^\delta$ and $w:=\sum_{\lambda \in \vee_\Sigma} \sigma_\lambda \otimes w_\lambda \in X^\delta$,
$\sigma_\lambda =\sum_{\mu \in \vee_\Psi} \langle \sigma_\lambda,\psi_\mu\rangle_{L_2(I)} \psi_\mu$ shows that
$w=\sum_{\mu \in \vee_\Psi} \psi_\mu \otimes \sum_{\lambda\in \vee_\Sigma} \langle \sigma_\lambda,\psi_\mu\rangle_{L_2(I)} w_\lambda \in Y^\delta$ by the first assumption.
Similarly
$\partial_t w=\sum_{\mu \in \vee_\Psi} \psi_\mu \otimes \tilde{v}_\mu$ where $\tilde{v}_\mu:=\sum_{\lambda\in \vee_\Sigma} \langle \sigma'_\lambda,\psi_\mu\rangle_{L_2(I)} w_\lambda$.
For any $\eps>0$, the second assumption shows that for any $\mu \in \vee_\Psi$, there exists a $v_\mu \in V_\mu^\delta$ with $ \tilde{v}_\mu(v_\mu) \geq (\gamma_\Delta-\eps) \|\tilde{v}_\mu\|_{V'} \|v_\mu\|_V$ and $\|\tilde{v}_\mu\|_{V'}=\|v_\mu\|_V$. With $v:=\sum_{\mu \in \vee_\Psi}\psi_\mu \otimes v_\mu \in Y^\delta$, we infer that
$(\partial_t w)(v)=\sum_{\mu \in \vee_\Psi}  \tilde{v}_\mu(v_\mu)  \geq (\gamma_\Delta-\eps) \|\partial_t w\|_{Y'} \|v\|_{Y}$.
\end{proof}


In order to be able to apply at linear cost
the arising linear operators in \eqref{52}--\eqref{53}, we will restrict the type of trial spaces
 $X^\delta=\sum_{\lambda \in \vee_\Sigma} \sigma_\lambda \otimes W_\lambda^\delta$ by imposing the following \emph{tree condition}
\be \label{tree-in-time1}
\tilde{\lambda} \triangleleft_\Sigma \lambda \Longrightarrow W^\delta_{\tilde{\lambda}} \supseteq W^\delta_\lambda.
\ee
For the same reason the analogous condition will be needed for $Y^\delta$.
For $X^\delta$ that satisfies \eqref{tree-in-time1}, below the latter will be verified, and sufficient, more easily verifiable conditions for \eqref{x18}--\eqref{x19} are derived.


\begin{proposition} \label{prop2}
Let $X^\delta=\sum_{\lambda \in \vee_\Sigma} \sigma_\lambda \otimes W_\lambda^\delta$ satisfy \eqref{tree-in-time1}.
For $\mu \in \vee_\Psi$, set
$$
\invbreve{W}_\mu^\delta:=\sum\limits_{\{\lambda \in \vee_\Sigma\colon |\lambda|=|\mu|,\,|\cS_\Psi(\mu) \cap \cS_\Sigma(\lambda)|>0\}} W_\lambda^\delta.
$$
Build $Y^\delta=\sum_{\mu \in \vee_\Psi} \psi_\mu \otimes V_\mu^\delta$ by taking $V_\mu^\delta=\{0\}$ when $\invbreve{W}_\mu^\delta=\{0\}$, and otherwise
\begin{align} \nonumber
&V_\mu^\delta \supseteq \invbreve{W}_\mu^\delta
\intertext{where} \label{x22b}
\inf_{0 \neq w \in \invbreve{W}_\mu^\delta}  \sup_{0 \neq v \in V_\mu^\delta} & \frac{w(v)}{\|w\|_{V'} \|v\|_{V}}  \geq \gamma_\Delta
\end{align}
for some constant $\gamma_\Delta>0$.
Then the conditions \eqref{x18} and \eqref{x19} from Proposition~\ref{prop1} for uniform stability are satisfied.

When $\dim V_\mu^\delta \lesssim \dim \invbreve{W}_\mu^\delta$, then $\dim Y^\delta \lesssim \dim X^\delta$, and
under the natural condition that a larger $\invbreve{W}_\mu^\delta$ gives rise to a larger (more precisely, not smaller) $V_\mu^\delta$, the constructed $Y^\delta$ satisfies the \emph{tree condition}
\be \label{tree-in-time2}
\tilde{\mu} \triangleleft_\Psi \mu \Longrightarrow V_{\tilde{\mu}} \supseteq V_\mu.
\ee
\end{proposition}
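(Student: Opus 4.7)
The plan is to verify each of the three assertions in turn by chaining together the support/neighborhood inclusions, the tree condition~\eqref{tree-in-time1}, and the frequency compatibility~\eqref{x16}.

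First, for \eqref{x18}, I would argue as follows. Suppose $\langle\sigma_\lambda,\psi_\mu\rangle_{L_2(I)}\neq 0$. By \eqref{x16} (contrapositive) this forces $|\mu|\leq|\lambda|$, and nonvanishing of the inner product gives $\supp\sigma_\lambda\cap\supp\psi_\mu\neq\emptyset$. Walking up the parent chain of $\lambda$ exactly $|\lambda|-|\mu|$ times produces an ancestor $\lambda^\ast$ with $|\lambda^\ast|=|\mu|$. Iterated application of \eqref{tree-in-time1} yields $W^\delta_\lambda\subseteq W^\delta_{\lambda^\ast}$, and iterated application of the monotonicity $\tilde\lambda\triangleleft_\Sigma\lambda\Rightarrow\cS_\Sigma(\tilde\lambda)\supseteq\cS_\Sigma(\lambda)\supseteq\supp\sigma_\lambda$ gives $\cS_\Sigma(\lambda^\ast)\supseteq\supp\sigma_\lambda$. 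Combined with $\cS_\Psi(\mu)\supseteq\supp\psi_\mu$, this shows $\cS_\Psi(\mu)\cap\cS_\Sigma(\lambda^\ast)\neq\emptyset$, so $\lambda^\ast$ appears in the index set defining $\invbreve W_\mu^\delta$. Hence $W_\lambda^\delta\subseteq W_{\lambda^\ast}^\delta\subseteq\invbreve W_\mu^\delta\subseteq V_\mu^\delta$. For \eqref{x19}, the identical argument applied with $\langle\sigma_\lambda',\psi_\mu\rangle_{L_2(I)}\neq 0$ (noting $\supp\sigma_\lambda'\subseteq\supp\sigma_\lambda$) shows that every $W_\lambda^\delta$ contributing to the sum in \eqref{x19} is contained in $\invbreve W_\mu^\delta$, at which point the inf-sup hypothesis \eqref{x22b} supplies the required bound $\gamma_\Delta$.

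Next, for the dimension estimate, I would write
\[
\dim Y^\delta=\sum_{\mu\in\vee_\Psi}\dim V_\mu^\delta\lesssim\sum_{\mu\in\vee_\Psi}\dim\invbreve W_\mu^\delta\leq\sum_{\mu\in\vee_\Psi}\sum_{\substack{\lambda\in\vee_\Sigma:\;|\lambda|=|\mu|\\|\cS_\Psi(\mu)\cap\cS_\Sigma(\lambda)|>0}}\dim W_\lambda^\delta,
\]
and swap the order of summation. The locality assumptions on $\Sigma$ and $\Psi$ (uniformly bounded numbers of wavelets per level with supports meeting a fixed set of diameter $\lesssim 2^{-\ell}$) imply that for each $\lambda$ the cardinality of $\{\mu:|\mu|=|\lambda|,\;\cS_\Psi(\mu)\cap\cS_\Sigma(\lambda)\neq\emptyset\}$ is bounded by an absolute constant, so the double sum is $\lesssim\sum_\lambda\dim W_\lambda^\delta=\dim X^\delta$.

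Finally, for the tree condition \eqref{tree-in-time2}, suppose $\tilde\mu\triangleleft_\Psi\mu$. By the assumed monotonicity of $V_\mu^\delta$ in $\invbreve W_\mu^\delta$, it suffices to show $\invbreve W^\delta_{\tilde\mu}\supseteq\invbreve W^\delta_\mu$. Pick any $\lambda$ with $|\lambda|=|\mu|$ and $\cS_\Psi(\mu)\cap\cS_\Sigma(\lambda)\neq\emptyset$, and let $\tilde\lambda$ be any parent of $\lambda$ in $\Sigma$. Then $|\tilde\lambda|=|\tilde\mu|$, and the neighborhood inclusions $\cS_\Psi(\tilde\mu)\supseteq\cS_\Psi(\mu)$ and $\cS_\Sigma(\tilde\lambda)\supseteq\cS_\Sigma(\lambda)$ imply $\cS_\Psi(\tilde\mu)\cap\cS_\Sigma(\tilde\lambda)\neq\emptyset$, so $\tilde\lambda$ is a summand in $\invbreve W_{\tilde\mu}^\delta$. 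Since \eqref{tree-in-time1} gives $W_{\tilde\lambda}^\delta\supseteq W_\lambda^\delta$, summing over all such $\lambda$ yields the desired inclusion.

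The only mildly delicate point is the ancestor-climb in the first paragraph: one must be careful that the neighborhoods $\cS_\Sigma(\cdot)$ grow monotonically along the chain so that $\cS_\Sigma(\lambda^\ast)$ still meets $\cS_\Psi(\mu)$; this is exactly the reason those enlarged neighborhoods were introduced in Sect.~\ref{Swavsintime}, rather than working with bare supports. Everything else is bookkeeping with levels and locality constants.
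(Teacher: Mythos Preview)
Your proof is correct and follows essentially the same route as the paper: the ancestor-climb using \eqref{x16}, \eqref{tree-in-time1}, and the neighborhood monotonicity to establish \eqref{x18}--\eqref{x19}; the summation swap with the locality bound for the dimension estimate; and the parent-pair argument for \eqref{tree-in-time2}. One cosmetic point: the definition of $\invbreve W_\mu^\delta$ requires $|\cS_\Psi(\mu)\cap\cS_\Sigma(\lambda)|>0$ rather than mere nonemptiness, but since $\langle\sigma_\lambda,\psi_\mu\rangle_{L_2(I)}\neq 0$ already forces positive-measure overlap of the supports (hence of the enlarged neighborhoods, which only grow along the ancestor chain), your argument goes through unchanged once you track measure instead of nonemptiness.
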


\begin{proof} Let $\langle \sigma_\lambda,\psi_\mu\rangle_{L_2(I)}\neq 0$ or $\langle \sigma_\lambda',\psi_\mu\rangle_{L_2(I)}\neq 0$. Then $|\cS_\Sigma(\lambda) \cap \cS_\Psi(\mu)|>0$ and $|\lambda| \geq |\mu|$ by \eqref{x16}.
When $|\lambda|>|\mu|$, $\lambda$ has an ancestor $\tilde{\lambda}$ with $|\tilde{\lambda}|=|\mu|$, $W_{\tilde{\lambda}}^\delta \supseteq W_\lambda^\delta$, and $\cS_\Sigma(\tilde{\lambda}) \supseteq \cS_\Sigma(\lambda)$, and thus $|\cS_\Sigma(\tilde{\lambda}) \cap \cS_\Psi(\mu)|>0$.
We conclude that both
$\sum_{\{\lambda \in \vee_\Sigma\colon \langle \sigma_\lambda,\psi_\mu\rangle_{L_2(I)}\neq 0\}} W_\lambda^\delta$
and
$\sum_{\{\lambda \in \vee_\Sigma\colon \langle \sigma_\lambda',\psi_\mu\rangle_{L_2(I)}\neq 0\}} W_\lambda^\delta$
are included in
$\invbreve{W}_\mu^\delta$, so that  \eqref{x18} and \eqref{x19}  are guaranteed by the selection of $V_\mu^\delta$.

The statement $\dim Y^\delta \lesssim \dim X^\delta$ when $\dim V_\mu^\delta \lesssim \dim \invbreve{W}_\mu^\delta$ follows from
$\dim \invbreve{W}_\mu^\delta \leq \sum_{\{\lambda \in \vee_\Sigma\colon |\lambda|=|\mu|,\,|\cS_\Psi(\mu) \cap \cS_\Sigma(\lambda)|>0\}} \dim W_\lambda^\delta$,
and the fact that for any $\lambda \in \vee_\Sigma$, the number of $\mu \in \vee_\Psi$ with $|\mu|=|\lambda|$ and $|\cS_\Psi(\mu) \cap \cS_\Sigma(\lambda)|>0$ is uniformly bounded.

Let $\tilde{\mu} \triangleleft_\Psi \mu$, and so $\cS_\Psi(\tilde{\mu}) \supseteq \cS_\Psi(\mu)$.
For each $\lambda \in \vee_\Sigma$ with $|\lambda|=|\mu|$ and $|\cS_\Psi(\mu) \cap \cS_\Sigma(\lambda)|>0$, there exists a $\tilde{\lambda} \triangleleft_\Sigma \lambda$, thus with $\cS_\Sigma(\tilde{\lambda}) \supseteq \cS_\Sigma(\lambda)$, and $W_{\tilde{\lambda}}^\delta \supseteq W_{\lambda}^\delta$ by \eqref{tree-in-time1}.
We conclude that $\invbreve{W}_{\tilde{\mu}}^\delta \supseteq \invbreve{W}_\mu^\delta$, which completes the proof of \eqref{tree-in-time2}.
\end{proof}

As follows from \cite[{\new{Thm.~3.11}}]{58.6} (taking $B$ to be the Riesz map $H \rightarrow H'$) condition \eqref{x22b} has the following equivalent formulation.
\begin{proposition}\label{prop3} Condition \eqref{x22b} is equivalent to existence of \new{uniformly bounded projectors} $Q_{\new{\mu}} \in \cL(V,V)$ with
$\ran Q_{\new{\mu}} \subseteq V_\mu^\delta$, $\ran Q_{\new{\mu}}^* \supseteq \invbreve{W}_{\new{\mu}}^\delta$. \new{Moreover, it holds that
$\frac{1}{\gamma_\Delta} \leq \sup_{\mu \in \vee_\Psi} \|Q_{\new{\mu}}\|_{\cL(V,V)} \leq 2+ \frac{1}{\gamma_\Delta}$.}
\end{proposition}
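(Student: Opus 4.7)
The strategy is to invoke \cite[Thm.~3.11]{58.6} with the auxiliary operator $B$ chosen to be the Riesz isomorphism $H \to H'$, which packages the standard equivalence between discrete inf-sup conditions and the existence of uniformly bounded Fortin-type projectors. The proof then reduces to verifying the two implications together with their quantitative bounds.

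For the ``$\Leftarrow$'' direction I would argue as follows. If $Q_\mu$ is a projector in $\cL(V,V)$ with $\ran Q_\mu \subseteq V_\mu^\delta$ and $\ran Q_\mu^* \supseteq \invbreve{W}_\mu^\delta$, then $Q_\mu^*$ is itself a projector so the second inclusion forces $Q_\mu^* w = w$ on $\invbreve{W}_\mu^\delta$; equivalently, $w(Q_\mu v) = w(v)$ for every $v \in V$ and $w \in \invbreve{W}_\mu^\delta$. Then
\[
  \|w\|_{V'} = \sup_{0 \neq v \in V} \frac{w(Q_\mu v)}{\|v\|_V} \leq \|Q_\mu\|_{\cL(V,V)} \sup_{0 \neq v' \in V_\mu^\delta} \frac{w(v')}{\|v'\|_V}
\]
yields \eqref{x22b} with $\gamma_\Delta = 1/\sup_\mu \|Q_\mu\|_{\cL(V,V)}$, and in particular the lower bound $\sup_\mu \|Q_\mu\|_{\cL(V,V)} \geq 1/\gamma_\Delta$.

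For the converse, assuming \eqref{x22b}, the key point is to construct a linear map $\Pi \colon V \to V_\mu^\delta$ with $w(\Pi v) = w(v)$ for every $w \in \invbreve{W}_\mu^\delta$. Setting $\phi_v(w) := w(v)$, one notes that $\phi_v$ is a functional on $\invbreve{W}_\mu^\delta$ (equipped with $\|\cdot\|_{V'}$) of norm at most $\|v\|_V$, and that \eqref{x22b} is, in our finite-dimensional Hilbert setting, equivalent to surjectivity of the evaluation map $T \colon V_\mu^\delta \to (\invbreve{W}_\mu^\delta)^*$, $v' \mapsto (w \mapsto w(v'))$, admitting a (minimum-norm) linear right-inverse $R$ of norm at most $1/\gamma_\Delta$. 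Setting $\Pi v := R\phi_v$ then gives $\phi_{\Pi v} = \phi_v$, so $\Pi^2 = \Pi$, and the range properties on $\Pi$ and $\Pi^*$ are immediate. The refined two-stage construction in \cite[Thm.~3.11]{58.6}, combining such a $\Pi$ with an a priori bounded projection onto $V_\mu^\delta$ (for instance the $V$-orthogonal one), then produces $Q_\mu$ satisfying the claimed $\|Q_\mu\|_{\cL(V,V)} \leq 2 + 1/\gamma_\Delta$.

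The main obstacle I anticipate is the quantitative bookkeeping of the constant $2 + 1/\gamma_\Delta$; the naive choice $Q_\mu := \Pi$ already yields the sharper $\|Q_\mu\|_{\cL(V,V)} \leq 1/\gamma_\Delta$, so the slightly looser bound reflects the more general formulation of \cite[Thm.~3.11]{58.6} for an arbitrary auxiliary $B$, rather than a genuine obstruction in our Hilbertian setting. Given that reference, the only task-specific check is that with $B$ the Riesz map $H \to H'$, the abstract pairing there reduces exactly to the evaluation $w(v)$ appearing in \eqref{x22b}.
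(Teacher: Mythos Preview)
Your proposal is correct and follows exactly the same approach as the paper, which simply invokes \cite[Thm.~3.11]{58.6} with $B$ the Riesz map $H\to H'$. You have supplied the standard Fortin-operator details that the paper leaves implicit; your observation that in this Hilbertian setting the direct construction $Q_\mu:=\Pi$ already achieves the sharper bound $1/\gamma_\Delta$, with $2+1/\gamma_\Delta$ being an artifact of the more general formulation in the cited reference, is also accurate.
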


\subsection{Selection of the spatial approximation spaces as finite element spaces}
We will select the spaces $W_\lambda^\delta$ from a collection ${\mathcal O}$  of finite element spaces in $V$, which collection is closed under taking (finite) sums, and for which
\be \label{x23}
\inf_{W \in {\mathcal O}} \inf_{0 \neq w \in W}  \sup_{0 \neq v \in W}  \frac{w(v)}{\|w\|_{V'} \|v\|_{V}}  >0.
\ee
Consequently, the stability conditions  \eqref{x20}--\eqref{x21} are satisfied for some $\gamma_\Delta>0$ by simply taking in Proposition~\ref{prop2},
\be \label{simple}
V_\mu^\delta:=\invbreve{W}_\mu^\delta \in {\mathcal O}.
\ee

\new{Proposition~\ref{prop3} shows that} \eqref{x23} is equivalent to uniform boundedness w.r.t.~the \new{$V$-norm} of the $H$-orthogonal projector onto $W \in {\mathcal O}$.
\new{An} example of a collection ${\mathcal O}$ \new{for which the latter is true} is given by the set of all finite element spaces $W_\lambda^\delta$ w.r.t.~quasi-uniform, uniformly shape regular conforming partitions of $\Omega$ into, say, $d$-simplices.

It is known that uniform boundedness w.r.t.~the $V$-norm of the $H$-orthogonal projector holds also true for finite element spaces w.r.t.~\emph{locally refined} partitions as long as the grading of the partitions is sufficiently mild.
\new{Recently, in~\cite{DST20},} it was shown that \new{in} $d=2$ spatial dimensions, \new{and for any polynomial order}, the collection of all conforming partitions that can be generated by \emph{newest vertex bisection} (NVB), starting from a fixed conforming initial partition $\tria_\bot$ with an assignment of the newest vertices that satisfies a so-called matching condition, is sufficiently mildly graded in the above sense.
Since the overlay of two conforming NVB partitions is a conforming NVB partition, this collection is closed under taking (finite) sums.
In other words, with this collection of finite element spaces, which we will employ in our experiments, again the choice \eqref{simple} guarantees uniform stability.

\new{Also in~\cite{DST20}, a similar result was shown in $d=2$ for red-blue-green refinement with any polynomial order.} Unfortunately, for $d>2$ such results seem not yet to be  available.

\begin{remark}[Getting $\gamma_\Delta$ close to $1$] \label{rem2} We discussed uniform boundedness w.r.t.~the $V$-norm of the $H$-orthogonal projectors onto a family of finite element spaces, which, by taking
  $V_\mu^\delta:=\invbreve{W}_\mu^\delta$ in Proposition~\ref{prop2}, yields the uniform inf-sup condition \eqref{x21} \new{for}
\emph{some} value $\gamma_\Delta>0$, and so uniform stability of the Galerkin discretizations of the saddle-point \eqref{m1}.

For proving convergence of our adaptive routine Algorithm~\ref{54}, however, we needed a value of $\gamma_\Delta>0$ that is sufficiently close to $1$.
Although in our numerical experiments, reported on in Sect.~\ref{Snumerics}, with continuous piecewise linear finite element spaces generated by conforming NVB and $V_\mu^\delta=\invbreve{W}_\mu^\delta$, the adaptive routine is $r$-linearly converging, there is no guarantee that $1-\gamma_\Delta$ is sufficiently small.

Restricting to quasi-uniform partitions, \new{it can be shown} that $1-\gamma_\Delta$ can be made arbitrarily small by taking the mesh underlying $V_\mu^\delta$ to be a sufficiently deep, but fixed \emph{refinement} of the mesh underlying $\invbreve{W}_\mu^\delta$. One may conjecture that the same result holds true for sufficiently mildly graded locally refined meshes. \comment{Further technical details omitted.}
\end{remark}

\subsection{Best possible rates}\label{sec:rates}
Although \new{we} have not proved it, we expect that the sequence of approximations generated by our adaptive Algorithm~\ref{54} is not only $r$-linearly converging, but, ignoring data oscillation, that it is a sequence of approximations \new{from} the family $(X^\delta)_{\delta \in \Delta}$ that converges with the best possible rate.
In this subsection, \new{under some (mild) smoothness conditions on the solution $u$,} we show that \new{for} our selection of the $(X^\delta)_{\delta \in \Delta}$ this best possible rate \emph{equals} the rate of  best approximation to the solution of the corresponding stationary problem from the spatial finite element spaces w.r.t.~the $V$-norm.

Consider a family of spaces $X^\delta=\sum_{\lambda \in \vee_\Sigma} \sigma_\lambda \otimes W_\lambda^\delta$ that satisfies \eqref{tree-in-time1}, with the $W_\lambda^\delta$ selected from a collection of finite element spaces ${\mathcal O}$ that in any case contains all \new{spaces} that correspond to uniform refinements of some initial partition of $\Omega$.
Let $\Sigma$ a collection of wavelets of order $d_t$, and assume that the finite element spaces are of order $d_x$.
When for each $X^\delta$, the space $Y^\delta$ is selected as in Proposition~\ref{prop2}, then the combination of \eqref{m6b} and the analysis from \cite[Sect.~7.1]{247.15}  shows that
if the exact solution $u$ of our parabolic problem satisfies the mixed regularity condition $u \in H^{d_t}(I)\otimes H^{d_x}(\Omega)$, then
a suitable (non-adaptive) choice of the spaces $W_\lambda^\delta$ yields a sequence of solutions $u^{\hat{\delta} \delta} \in X^\delta$ (for arbitrary $Y^{\hat{\delta}} \supset Y^\delta$)
 of the modified discretized saddle-point from Sect~\ref{Smod}, for which
$$
\|u-u^{\hat{\delta} \delta}\|_X \lesssim (\dim X^\delta)^{-\min(d_t-1,\frac{d_x-1}{d})}.
$$
Note that for $d_t-1 \geq \frac{d_x-1}{d}$, the rate $\frac{d_x-1}{d}$ equals the best rate in the $V$-norm 
when the finite element spaces are employed for solving the corresponding \emph{stationary} problem, which is posed on \new{$\Omega \subset \R^d$ instead of on $I \times \Omega \subset \R^{d+1}$}.

For an optimal \emph{adaptive} choice of the \new{finite element spaces} $W_\lambda^\delta$ \new{from a} sufficiently `rich' collection \new{that contains}  locally refined partitions, as the collection of all conforming NVB partitions, it can be expected that the rate $\min(d_t-1,\frac{d_x-1}{d})$ is realized under \emph{much} milder regularity conditions on $u$. \comment{Presentation shortened.}

\subsection{Preconditioners}
\label{sec:preconditioners}
Our adaptive \new{algorithm for} the parabolic problem requires optimal preconditioners for ${E_Y^\delta}' A E_Y^\delta$ and $S^{\udelta \delta}$,
\new{see \eqref{kappadelta} and \eqref{x28}.
That is,} for both $Z=Y$ and $Z=X$ and $\delta \in \Delta$, we need operators
$K_Z^\delta={K_Z^\delta}' \in \cL({Z^\delta}',Z^\delta)$ with $h(K_Z^\delta h) \eqsim \|h\|_{{Z^\delta}'}^2$ ($h \in {Z^\delta}'$), which moreover should be applicable at linear cost.

To construct these preconditioners, for $Z\in \{Y,X\}$ we will select a symmetric, bounded, and coercive bilinear form on $Z \times Z$, and after selecting \emph{some} basis for $Z^\delta$, we will construct a matrix ${\bf K}_Z^\delta={{\bf K}_Z^\delta}^\top$ that can be applied in linear complexity, and that is \emph{uniformly} spectrally equivalent to the inverse of the \emph{stiffness matrix} corresponding to this bilinear form (being the matrix representation of the linear mapping $Z^\delta \rightarrow {Z^\delta}'$ defined by the bilinear form w.r.t.~the basis for $Z^\delta$ \new{and} the corresponding dual basis for ${Z^\delta}'$).
Then $K_Z^\delta \in \cL({Z^\delta}',Z^\delta)$, defined as the operator whose  matrix representation is ${\bf K}_Z^\delta$ w.r.t.~the aforementioned bases of ${Z^\delta}'$ and $Z^\delta$, is the preconditioner that satisfies our needs.

Notice that the choice of the basis for $Z^\delta$ is irrelevant.
Indeed, denoting the aforementioned stiffness matrix as ${\bf C}_Z^\delta$ with corresponding operator $C_Z^\delta={C_Z^\delta}' \in \Lis(Z^\delta,{Z^\delta}')$, one may verify that
\begin{align*}
\|K_Z^\delta\|_{\cL({Z^\delta}',Z^\delta)} \|(K_Z^\delta)^{-1}\|_{\cL(Z^\delta,{Z^\delta}')}
& \eqsim \|K_Z^\delta C_Z^\delta\|_{\cL(Z^\delta,Z^\delta)} \|(K_Z^\delta C_Z^\delta)^{-1}\|_{\cL(Z^\delta,{Z^\delta})}\\
&=\frac{\lambda_{\max}({\bf K}_Z^\delta {\bf C}_Z^\delta)}{\lambda_{\min}({\bf K}_Z^\delta {\bf C}_Z^\delta)}.
\end{align*}

\subsubsection{Preconditioner at the `test side'}  \label{sec:precY} Let $\new{Z=Y}$. Since $\Psi$ is an orthonormal basis for $L_2(I)$, any $y \in Y$ \new{equals} $\sum_{\mu \in \vee_\Psi} \psi_\mu \otimes v_\mu$ for some $v_\mu \in V$ with $\sum_{\mu \in \vee_\Psi} \|v_\mu\|_V^2<\infty$.
Taking as bilinear form on $Y \times Y$ simply the scalar product on $Y \times Y$, we have
$$
\langle \sum_{\mu_1 \in \vee_\Psi} \psi_{\mu_1} \otimes v^{(1)}_{\mu_1}, \sum_{\mu_2 \in \vee_\Psi} \psi_{\mu_2} \otimes v^{(2)}_{\mu_2}\rangle_Y=
\sum_{\mu \in \vee_\Psi} \langle v^{(1)}_{\mu},v^{(2)}_{\mu}\rangle_V.
$$
Equipping $Y^\delta=\sum_{\mu \in \vee_\Psi} \psi_\mu \otimes V_\mu^\delta$ with a basis of type $\cup_{\mu \in \vee_\Psi}  \psi_\mu \otimes \Phi_\mu^\delta$, the resulting stiffness matrix reads as $\blockdiag[{\bf A}_\mu^\delta]_{\mu \in \vee_\Psi}$, where ${\bf A}_\mu^\delta=\langle \Phi_\mu^\delta, \Phi_\mu^\delta\rangle_V$ is the stiffness matrix of $\langle\cdot,\cdot\rangle_V$ w.r.t.~$\Phi_\mu^\delta$.
Selecting ${\bf K}_\mu^\delta \eqsim ({\bf A}_\mu^\delta)^{-1}$, the matrix representation of the optimal preconditioner reads as
$$
{\bf K}_Y^\delta=\blockdiag[{\bf K}_\mu^\delta]_{\mu \in \vee_\Psi}.
$$
It is well-known that when $V_\mu^\delta$ is a finite element space, possibly w.r.t.~a locally refined partition, suitable ${\bf K}_\mu^\delta$ of \emph{multi-grid} type are available. These ${\bf K}_\mu^\delta$ can be applied in linear complexity, and so can ${\bf K}_Y^\delta$.

\new{\begin{remark} \textcolor{black}{To show, in Theorem~\ref{thm:main}, \new{$r$-linear convergence of}
  our adaptive Algorithm~\ref{54}, \new{we require the constant~$\kappa_\Delta$ from~\eqref{kappadelta} to be sufficiently small,} i.e., the eigenvalues of \new{the} $K_Y^\delta {E_Y^\delta}' A E_Y^\delta$ \new{to be} sufficiently close to $1$.
Given an initial optimal, self-adjoint, \new{coercive} preconditioner $K_Y^\delta$, and some upper and lower bounds on the spectrum of the preconditioned system, one can satisfy the latter condition by polynomial acceleration using Chebychev polynomials of sufficiently high degree.
  In our numerical experiments, \new{this `acceleration' turned out to be unnecessary}.}
\end{remark}}

\subsubsection{Preconditioner at the `trial side'} \label{sec:precX}
L\new{et $Z=X$.}
The preconditioner presented in this section is inspired by constructions of preconditioners in \cite{12.5,234.7}
for parabolic problems discretized on a tensor product of temporal and spatial spaces.

\new{As} $\Sigma$ and $\{2^{-|\lambda|} \sigma_\lambda \colon \lambda \in \vee_\Sigma\}$ \new{are} Riesz bases for $L_2(I)$ and $H^1(I)$, any $x \in X$ \new{equals} $\sum_{\lambda \in \vee_\Sigma} \sigma_\lambda \otimes w_\lambda$ for some $w_\lambda \in V$ with
$\sum_{\lambda \in \vee_\Sigma} \|w_\lambda\|_V^2+4^{|\lambda|}  \|w_\lambda\|_{V'}^2<\infty$, and
$$
\langle \sum_{\lambda_1 \in \vee_\Sigma} \sigma_{\lambda_1} \otimes w^{(1)}_{\lambda_1},
 \sum_{\lambda_2 \in \vee_\Sigma} \sigma_{\lambda_2} \otimes w^{(2)}_{\lambda_2}\rangle
:=
 \sum_{\lambda \in \vee_\Sigma} \langle w^{(1)}_\lambda ,w^{(2)}_\lambda \rangle_V
+4^{|\lambda|}  \langle w^{(1)}_\lambda ,w^{(2)}_\lambda \rangle_{V'}
$$
is a symmetric, bounded, \new{coercive} bilinear form on $X \times X$.
Equipping $X^\delta=\sum_{\lambda \in \vee_\Sigma} \sigma_\lambda \otimes W_\lambda^\delta$ with a basis of type $\bigcup_{\lambda \in \vee_\Sigma} \sigma_\lambda \otimes \Phi_\lambda^\delta$, the resulting stiffness matrix reads
$$
\blockdiag[{\bf A}_\lambda^\delta+4^{|\lambda|} \langle \Phi_\lambda^\delta, \Phi_\lambda^\delta\rangle_{V'}]_{\lambda \in \vee_\Sigma}
$$
where ${\bf A}_\lambda^\delta=\langle \Phi_\lambda^\delta, \Phi_\lambda^\delta\rangle_V$.

Thanks to our assumption \eqref{x23}, for $u \in W_\lambda^\delta$ it holds that $\|u\|_{V'} \new{\eqsim} \sup_{0 \neq w \in W_\lambda^\delta} \frac{\langle u,w\rangle}{\|w\|_V}$.
With ${\bf u}$  denoting the representation of $u$ w.r.t.~$\Phi_\lambda^\delta$, we have
$$
\sup_{0 \neq w \in W_\lambda^\delta} \frac{\langle u,w\rangle}{\|w\|_V}=
\|({\bf A}_\lambda^\delta)^{-\frac12} {\bf M}_\lambda^\delta {\bf u}\|,
$$
where ${\bf M}_\lambda^\delta=\langle \Phi_\lambda^\delta, \Phi_\lambda^\delta\rangle$, so that
$$
 \langle \Phi_\lambda^\delta, \Phi_\lambda^\delta\rangle_{V'} \new{\eqsim}
{\bf M}_\lambda^\delta ({\bf A}_\lambda^\delta)^{-1} {\bf M}_\lambda^\delta.
$$

Since both ${\bf A}_\lambda^\delta$ and ${\bf M}^\delta_\lambda$ are \new{symmetric positive definite},
\cite[Thm.~4]{242.817} shows
\begin{align*}
{\textstyle \frac{1}{2}}\big( {\bf A}_\lambda^\delta\!+\!4^{|\lambda|} {\bf M}^\delta_\lambda ({\bf A}^\delta_\lambda)^{-1}  {\bf M}^\delta_\lambda\big)
\!\leq \! ({\bf A}_\lambda^\delta\!+\!2^{|\lambda|}{\bf M}^\delta_\lambda) ({\bf A}_\lambda^\delta)^{-1} ({\bf A}_\lambda^\delta\!+\!2^{|\lambda|}{\bf M}^\delta_\lambda)
\!\leq\!
{\bf A}_\lambda^\delta\!+\!4^{|\lambda|} {\bf M}^\delta_\lambda ({\bf A}^\delta_\lambda)^{-1}  {\bf M}^\delta_\lambda.
\end{align*}
Now assuming that
\be \label{robustmg}
{\bf K}^\delta_{\lambda} \eqsim ({\bf A}_\lambda^\delta+2^{|\lambda|}{\bf M}^\delta_\lambda)^{-1},
\ee
we infer that
$$
{\bf K}_X^\delta=
\blockdiag\Big[{\bf K}^\delta_{\lambda} {\bf A}_\lambda^\delta {\bf K}^\delta_{\lambda}\Big]_{\new{\lambda \in \vee_\Sigma}}
$$
is the matrix representation of an optimal preconditioner.

\begin{remark}
  Notice that \eqref{robustmg} requires an optimal preconditioner of a discretized reaction-diffusion equation that is robust \new{w.r.t.}~the size of the (constant) reaction term.
  In \cite{241.1} it was shown that, under a `full-regularity' assumption, for quasi-uniform meshes multiplicative multi-grid yields such a \new{preconditioner, whose} application can \new{even} be performed at linear cost. Although we expect that
  \new{this assumption can be avoided using}
  the theory of subspace correction methods, and furthermore that the optimality, robustness and linear complexity result extends to locally refined meshes, proofs of such extensions seem not to be available.
\end{remark}

\section{A concrete realization} \label{S6}
\subsection{The collection ${\mathcal O}$ of finite element spaces, and the mapping $\delta \rightarrow \udelta$}\label{Smapping}
We further specify the collection ${\mathcal O}$ of finite element spaces, construct a linearly independent set in $H^1_{0,\Gamma_D}(\Omega)$ known as the hierarchical basis, and equip it with a tree structure such that there exists a 1-1 correspondence between the finite element spaces in ${\mathcal O}$, and the spans of subsets of the hierarchical basis that form trees.

With this specification of ${\mathcal O}$, there will be a 1-1 correspondence between the spaces $X^\delta=\sum_{\lambda \in \sigma_\lambda} \sigma_\lambda \otimes W_\lambda^\delta$ with $W_\lambda^\delta \in {\mathcal O}$ that satisfy \eqref{tree-in-time1}, and the spans of collections of tensor products of wavelets $\sigma_\lambda$ and hierarchical basis functions whose sets of index pairs are \emph{lower}, also known as \emph{downward closed.}
Given such a $X^\delta$, we will define $X^\udelta$ by a certain enlargement the lower set.
\smallskip

For $d \geq 2$, let $\bbT$ be the family of all \emph{conforming} partitions of a polytope $\Omega \subset \R^d$ into (closed) $d$-simplices that can be created by NVB
starting from some given conforming initial partition $\tria_\bot$ with an assignment of the newest vertices that satisfies the matching condition, see \cite{249.87}.
We define a \emph{partial order} on $\bbT$ by writing $\tria \preceq \tilde{\tria}$ when $\tilde{\tria}$ is a refinement of $\tria$.

With some small adaptations that we leave to the reader, in the following
the case $d=1$ can be included by letting $\bbT$ \new{be the family of partitions} of $\Omega$ into (closed) \new{subintervals  constructed} by bisections from $\tria_\bot=\{\Omega\}$ such that the generations of any two neighbouring subintervals in any $\tria \in \bbT$ differ by not more than one.

The collection ${\mathcal O}$ that we will consider is formed by the spaces $W=W_\tria$ of \emph{continuous piecewise linears} w.r.t.~ $\tria \in \bbT$, zero on a possible Dirichlet boundary $\Gamma_D$ being the union of $\partial T \cap \partial\Omega$ for some $T \in \tria_\bot$.
We expect that generalizations to finite element spaces of higher order do not impose essential difficulties.

For $T \in \mathfrak{T}:=\cup_{\tria \in \bbT} \{T\colon T \in \tria\}$, we set $\gen(T)$ to be the number of bisections needed to create $T$ from its `ancestor' $T' \in \tria_\bot$.
With $\mathfrak{N}$ being the set of all vertices (or nodes) of all $T \in\mathfrak{T}$,
for $\nu \in \mathfrak{N}$ we set $\gen(\nu):=\min\{\gen(T)\colon T \in \mathfrak{T},\,\nu \in T\}$.

Any $\nu \in \mathfrak{N}$ with $\gen(\nu)>0$ is the midpoint of an edge of one or more $T  \in \mathfrak{T}$ with $\gen(T)=\gen(\nu)-1$. The vertices $\tilde{\nu}$ of these $T$ with $\gen(\tilde{\nu})=\gen(\nu)-1$ are defined as the parents of $\nu$. We denote this relation between a parent $\tilde{\nu}$ and a child $\nu$ by
$\tilde{\nu} \triangleleft_\mathfrak{N} \nu$, see Figure~\ref{parentchild}.
\begin{figure}
\begin{center}
\input{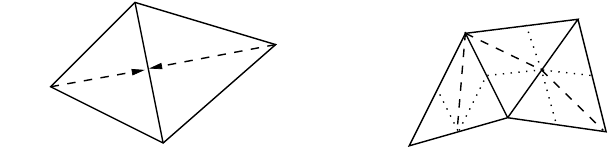_t}
\end{center}
  \caption{$\tilde{\nu}_1,\tilde{\nu}_2 \triangleleft_\mathfrak{N} \nu$ (left picture); and $\tria$ and its refinement $\tria^{d +}$ (for $d=2$) (right picture).}
  \label{parentchild}
\end{figure}
Vertices $\nu \in \mathfrak{N}$ with $\gen(\nu)=0$ have no parents.

 An (essentially) non-overlapping partition $\tria$ of $\overline{\Omega}$ into $T  \in \mathfrak{T}$ is in
 $\bbT$ \emph{if and only if} the set $N_\tria$ of vertices of all $T \in \tria$ forms a \emph{tree}, meaning that it contains all $\nu \in \mathfrak{N}$ with $\gen(\nu)=0$ as well as all parents of any $\nu \in N_\tria$ with $\gen(\nu)>0$, cf.~\cite{64.59} for the $d=2$ case.

\begin{definition}
For any $\tria \in \bbT$, we define $\tria^{d +} \in \bbT$ (denoted as $\tria^{++}$ in \cite{64.59} for the $d=2$ case)
by replacing any $T \in \tria$ by its $2^d$ `descendants' of the $d$th generation, see Figure~\ref{parentchild}.
\end{definition}
 Since this refinement adds exactly one vertex at the midpoint on any edge of all $T \in \tria$, one infers that indeed $\tria^{d +} \in \bbT$.
The corresponding tree $N_{\tria^{d +}}$ is created from $N_\tria$ by the addition of all descendants up to generation $d$ of all $\nu \in N_\tria$.\footnote{The addition of only all children of all $\nu \in N_\tria$ yields a tree only if $\tria$ is a uniform partition.}

 For $\nu \in \mathfrak{N}$, we set $\phi_\nu$ as the continuous piecewise linear function w.r.t.~the \emph{uniform partition} $\{T \in  \mathfrak{T}\colon \gen(T)=\gen(\nu)\} \in \bbT$, which function is $1$ at $\nu$ and $0$ at all other vertices of this partition.
 Setting $\mathfrak{N}_0:=\mathfrak{N}\setminus \Gamma_D$ and, for any $\tria \in \bbT$, $N_{\tria,0}:=N_\tria \setminus \Gamma_D$,
 the collection $\{\phi_\nu\colon \nu \in \mathfrak{N}_0\}$ is known as the \emph{hierarchical basis}, and for any $\tria \in \bbT$, it holds that $W_\tria=\Span\{\phi_\nu\colon \nu \in N_{\tria,0}\}$.

 With above specification of the collection ${\mathcal O}$ of finite element spaces, there exists a 1-1 correspondence between the spaces $\sum_{\lambda \in \vee_\Sigma} \sigma_\lambda \otimes W_\lambda^\delta$ with $W_\lambda^\delta \in {\mathcal O}$ that satisfy \eqref{tree-in-time1},
 and the spaces of the form
 \be \label{eqn:Xdelta}
X^\delta=\Span\{\sigma_\lambda \otimes \phi_\nu\colon (\lambda,\nu) \in I_{\delta,0}:=I_\delta \setminus  (\vee_\Sigma \times \Gamma_D)\}
 \ee
 for some finite
 $I_\delta \subset \vee_\Sigma \times \mathfrak{N}$ being a \emph{lower set} in the sense
  \be \label{eqn:Xdeltab}
 (\lambda,\nu) \in I_\delta \text{ and} \left\{\begin{array}{lcl}
 \tilde{\lambda} \triangleleft_\Sigma \lambda & \Longrightarrow & (\tilde{\lambda},\nu) \in I_\delta,\\
 \tilde{\nu} \triangleleft_\mathfrak{N} \nu \text{ or } \gen(\tilde{\nu})=0 & \Longrightarrow & (\lambda,\tilde{\nu}) \in I_\delta.
 \end{array}\right.
 \ee

For above specification of $X^\delta$, from Proposition~\ref{prop2} with the specification \eqref{simple} one infers that the corresponding space \new{$Y^\delta$ is given by}
\be \label{eqn:Ydelta}
Y^\delta=\Span\{\psi_\mu \otimes \phi_\nu\colon (\mu,\nu) \in I_{\delta,0}^Y\},
 \ee
 where
 \be \label{eqn:Ydeltab}
  I_{\delta,0}^Y:=\{(\mu,\nu)\colon \exists (\lambda,\nu) \in I_{\delta,0},\, \mu \in \vee_\Psi,\,|\mu|=|\lambda|,\,|\cS_\Psi(\mu) \cap \cS_\Sigma(\lambda)|>0\}
  \ee
  which index set is a lower set.

 \begin{remark}[Complexity of matrix-vector multiplications]\label{rem:matvec}
 The fact that the index sets of the bases for $X^\delta$ and $Y^\delta$ are lower sets is the key
   \new{to computing} residuals of the system $S^{\udelta \delta} u^\delta=f^\delta$ (\eqref{52}) in ${\mathcal O}(\dim X^\delta)$ operations.
 \comment{Presentation shortened.}
  The algorithm that realizes this complexity  makes a clever use of multi- to single-scale transformations alternately in time and space. In a `uniform' sparse-grid setting, i.e., without `local refinements', this algorithm was introduced in \cite{18.83}, and it was later extended to general lower sets in \cite{171.7}. The definition of a lower set in \cite{171.7}, there called multi-tree, is more restrictive than our current definition that allows more localized refinements, and details about the matrix-vector multiplication and a proof of its optimal computational complexity can be found in \cite{306.6}.
 \end{remark}

 \begin{definition}\label{def:Xudelta} Given $X^\delta=\Span\{\sigma_\lambda \otimes \hat{\phi}_\nu\colon (\lambda,\nu) \in I_{\delta,0}\}$ for some  lower set $I_\delta \subset \vee_\Sigma \times \mathfrak{N}$, we define the lower set $I_{\udelta}$, and with that $X^{\udelta}$,
  by adding, for each $(\lambda,\nu) \in I_\delta$
  and any child $\tilde{\lambda}$ of $\lambda$ and any descendant $\tilde{\nu}$ of $\nu$ up to generation $d$, all pairs
  $(\tilde{\lambda},\nu)$ and $(\lambda,\tilde{\nu})$ to $I_\delta$.
  \end{definition}

\subsection{The collection $\Theta_\delta$ such that $X^\udelta=X^\delta \oplus \Theta_\delta$}
Recall that for the bulk chasing process we need an `$X$-stable' basis $\Theta_\delta$ that spans an `$X$-stable' complement space of $X^\delta$ in $X^\udelta$, i.e., a collection that satisfies \eqref{m17}. For that goal we define a \emph{modified hierarchical basis} $\{\hat{\phi}_\nu\colon \nu \in \mathfrak{N}_0\}$ by $\hat{\phi}_\nu:=\phi_\nu$ when $\gen(\nu)=0$, and
$$
\hat{\phi}_\nu:=\phi_\nu-\frac{\sum_{\{\tilde{\nu} \in \mathfrak{N}_0\colon \tilde{\nu}\triangleleft_\mathfrak{N} \nu\}}
\frac{\int_\Omega \,\phi_\nu dx}{\int_\Omega \,\phi_{\tilde{\nu}} dx}
\phi_{\tilde{\nu}}}
{\# \{\tilde{\nu} \in \mathfrak{N}\colon \tilde{\nu}\triangleleft_\mathfrak{N} \nu\}}
$$
otherwise. Notice that for those $\nu$ with $\gen(\nu)>0$ that have all their parents not on $\Gamma_D$ it holds that $\int_\Omega \hat{\phi}_\nu\,dx=0$, i.e., $\hat{\phi}_\nu$ has a \emph{vanishing moment}, and furthermore that for \new{any $\tria \in \bbT$},
$$
W_\tria=\Span\{\phi_\nu\colon \nu \in N_{\tria,0}\}=\Span\{\hat{\phi}_\nu\colon \nu \in N_{\tria,0}\},
$$
 and thus for any lower set $I_\delta \subset \vee_\Sigma \times \mathfrak{N}$,
$$
X^\delta=\Span\{\sigma_\lambda \otimes \hat{\phi}_\nu\colon (\lambda,\nu) \in I_{\delta,0}\}=\Span\{\sigma_\lambda \otimes \phi_\nu\colon (\lambda,\nu) \in I_{\delta,0}\}.
$$
Moreover, for any $\tria \in \bbT$,
the basis transformation from the modified to unmodified hierarchical basis for $W_\tria$ can be applied in linear complexity traversing from the leaves to the roots.

Given $\delta$, the collection $\Theta_\delta$ will be the set of properly normalized functions $\sigma_\lambda \otimes \hat{\phi}_\nu$ for $(\lambda,\nu) \in I_{\udelta,0}\setminus I_{\delta,0}$. In order to demonstrate \eqref{m17}, we have to impose some gradedness assumption on the lower sets $I_\delta$.

  \begin{definition} \label{def:gradedness} The \emph{gradedness constant} of a lower set $I_\delta \subset \vee_\Sigma \times \mathfrak{N}$ is the smallest $L_\delta \in \N$ such that for all $(\lambda,\nu) \in I_\delta$ for which $\nu$ has an ancestor $\tilde{\nu} \in \mathfrak{N}$ with $\gen(\nu)-\gen(\tilde{\nu})=L_\delta$, it holds that  $(\breve{\lambda},\tilde{\nu}) \in I_\delta$ for any child $\breve{\lambda} \in \vee_\Sigma$ of $\lambda$.
   \end{definition}

\begin{remark}[Uniform boundedness of the gradedness constants] Under the (unproven) assumption that our adaptive method creates a sequence of spaces $X^\delta$ which are quasi-optimal for the approximation of the solution of the the parabolic PDE, one may hope that these spaces have a \emph{uniformly bounded gradedness constant}, unless (locally) the solution $u$ is extremely more smooth as function of $t$ than as function of the spatial variables.

To see this, \new{for some constant $L \in \N$, consider the non-adaptive sparse grid index sets of the form
   $\{(\lambda,\nu) \in \vee_\Sigma \times \mathfrak{N}\colon L |\lambda| +\gen(\nu) \leq N\}$,
   which are appropriate when the behaviour of $u$ as function of $t$ and  of the spatial variables is globally similar.
   Their gradedness constant is $L$.
   In the corresponding trial spaces, the smallest spatial resolution equals the smallest temporal resolution to the power $L/d$. 
   So only when such a polynomial decay of the spatial resolution as function of the temporal resolution does not suffice for a proper approximation of $u$, a uniformly bounded gradedness constant cannot be expected.}
 \end{remark}

   \begin{proposition} \label{stableset} For $(\lambda,\nu) \in \vee_\Sigma \times \mathfrak{N}_0$, let
   $e_{\lambda \mu}:=1/\sqrt{2^{(\frac{2}{d}-1)\gen(\nu)}+4^{|\lambda|} 2^{(-\frac{2}{d}-1)\gen(\nu)}}$ and
   $\theta_{\lambda \nu}:=e_{\lambda \mu}\sigma_\lambda \otimes \hat{\phi}_\nu$.
 For any $\delta \in \Delta$, let $\Theta_\delta:=\{\theta_{\lambda \nu}\colon (\lambda,\nu) \in  J_\delta:=I_{\udelta,0}\setminus I_{\delta,0}\}$. Then
  $X^\delta\oplus \Span \Theta_\delta=X^\udelta$, and there exist constants $0<m_\delta \leq M_\delta$, \emph{only dependent} on the gradedness constant $L_\delta$, such that for any $z \in X^\delta$ and ${\bf c}=(c_{\lambda \nu})_{(\lambda,\nu) \in  I_{\udelta,0}\setminus I_{\delta,0}} \subset \R$,
 $$
 m_\delta (\|z\|^2_X+\|{\bf c}\|^2) \leq \|z+{\bf c}^\top \Theta_\delta\|_X^2 \leq M_\delta (\|z\|^2_X+\|{\bf c}\|^2).
 $$
   \end{proposition}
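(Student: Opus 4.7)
My plan is to reduce the claim to a per-temporal-level spatial Riesz-basis statement via the wavelet structure in time, and then to handle that spatial statement using the vanishing-moment property of the modified hierarchical basis. The direct sum $X^\udelta = X^\delta \oplus \Span \Theta_\delta$ comes first: since $\phi_\nu \mapsto \hat\phi_\nu$ is lower-triangular in $\gen$, on any tree-shaped subset of $\mathfrak{N}_0$ the two families span the same space. Hence $X^\delta$ and $X^\udelta$ respectively coincide with the $\hat\phi_\nu$-span over $I_{\delta,0}$ and $I_{\udelta,0}$, and $I_{\udelta,0} = I_{\delta,0} \sqcup J_\delta$ yields the direct sum.

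To decouple in time I would use that $\Sigma$ is a Riesz basis for $L_2(I)$ and $\{2^{-|\lambda|}\sigma_\lambda\}$ for $H^1(I)$, combined with $\|\cdot\|_X^2 \eqsim \|\cdot\|_{L_2(I;V)}^2 + \|\cdot\|_{H^1(I;V')}^2$. A tensor-product Riesz-basis argument then gives, for $w = \sum_\lambda \sigma_\lambda \otimes w_\lambda$,
\[
\|w\|_X^2 \;\eqsim\; \sum_\lambda \bigl(\|w_\lambda\|_V^2 + 4^{|\lambda|}\|w_\lambda\|_{V'}^2\bigr) \;=:\; \sum_\lambda \nrm w_\lambda \nrm_\lambda^2 .
\]
Expanding $z + {\bf c}^\top \Theta_\delta = \sum_\lambda \sigma_\lambda \otimes \bigl(z_\lambda + \sum_\nu c_{\lambda\nu} e_{\lambda\nu} \hat\phi_\nu\bigr)$ with $z_\lambda \in \Span\{\hat\phi_\nu: (\lambda,\nu) \in I_{\delta,0}\}$, the proposition reduces to the spatial claim
\[
\nrm z_\lambda + \textstyle\sum_\nu c_{\lambda\nu} e_{\lambda\nu}\hat\phi_\nu \nrm_\lambda^2 \;\eqsim\; \nrm z_\lambda \nrm_\lambda^2 + \sum_\nu c_{\lambda\nu}^2
\]
holding uniformly in $\lambda$ with constants depending only on $L_\delta$.

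The spatial claim is the main obstacle. A direct computation --- starting from $\|\hat\phi_\nu\|_V^2 \eqsim 2^{(2/d-1)\gen(\nu)}$ and $\|\hat\phi_\nu\|_{V'}^2 \eqsim 2^{-(2/d+1)\gen(\nu)}$, which follow from the hat-function bounds on a shape-regular mesh of size $\sim 2^{-\gen(\nu)/d}$ (the parental correction lives at coarser levels and does not affect the leading-order estimates) --- shows that $e_{\lambda\nu}$ is the normalization making $\nrm e_{\lambda\nu}\hat\phi_\nu\nrm_\lambda \eqsim 1$. The claim is thus equivalent to $\{e_{\lambda\nu}\hat\phi_\nu\}_{(\lambda,\nu) \in I_{\udelta,0}}$ being a Riesz basis of $W_\lambda^\udelta$ w.r.t.~$\nrm\cdot\nrm_\lambda$. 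I would establish this by viewing $\nrm\cdot\nrm_\lambda$ as a weighted interpolation of $H^1(\Omega)$ and $H^{-1}(\Omega)$ and invoking stable multilevel decomposition theory: the vanishing moment $\int_\Omega \hat\phi_\nu\,dx = 0$ (valid for $\nu$ whose parents all avoid $\Gamma_D$) promotes the modified hierarchical basis to a true wavelet-type system, Riesz-stable in $H^s(\Omega)$ uniformly for $s$ in an open interval containing $[-1,1]$, and uniformly over the spatial tree provided the gradedness is bounded; the constant $L_\delta$ controls the number of descendant interactions that must be summed and thereby enters only through the Riesz constants. Restricting the resulting Riesz basis to the sub-index sets $I_{\delta,0}$ and $J_\delta$ and summing over $\lambda$ completes the proof, with $m_\delta, M_\delta$ depending only on $L_\delta$.
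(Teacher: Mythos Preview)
The temporal decoupling and the reduction to a per-$\lambda$ spatial estimate are correct and match the paper's proof. The gap is in your treatment of the spatial claim. You propose to obtain it by invoking ``stable multilevel decomposition theory,'' asserting that the single vanishing moment of $\hat\phi_\nu$ makes the modified hierarchical basis Riesz-stable in $H^s(\Omega)$ for $s$ in an open interval containing $[-1,1]$. This is not justified in dimension $d\geq 2$: one vanishing moment does not cure the well-known $H^1$-instability of hierarchical bases on simplicial meshes (the Yserentant condition number grows with the number of levels), and no off-the-shelf wavelet theorem delivers the range $[-1,1]$ here. The vanishing moment is essential for the $H^{-1}$ side but does nothing for the $H^1$ side.

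The paper proceeds differently. It proves a dedicated spatial lemma (Lemma~\ref{lemmaatje}): for $\tria \preceq \tilde\tria$ in $\bbT$, $v \in W_\tria$, and $\nu$ ranging over $N_{\tilde\tria,0}\setminus N_{\tria,0}$, both
\[
\|v + {\textstyle\sum_\nu} d_\nu \hat\phi_\nu\|_{H^1(\Omega)}^2 \;\eqsim\; \|v\|_{H^1(\Omega)}^2 + {\textstyle\sum_\nu} 2^{(2/d-1)\gen(\nu)}|d_\nu|^2
\]
and the analogous $H^1_{0,\Gamma_D}(\Omega)'$ estimate hold, with constants depending \emph{only on the generation gap} $M_{\tilde\tria\tria}:=\max\{\gen(\tilde T)-\gen(T):\tilde\tria\ni\tilde T\subset T\in\tria\}$. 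The $H^1$ part is obtained from nodal interpolation error and the standard inverse inequality (no vanishing moment is used); the $H^{-1}$ part uses the vanishing moment for a local Poincar\'e-type bound $|\langle\hat\phi_\nu,v\rangle|\lesssim 2^{-\gen(\nu)/d}\|\hat\phi_\nu\|_{L_2}|v|_{H^1(\supp\hat\phi_\nu)}$ together with a nonstandard inverse inequality $\|h_{\tilde\tria}\cdot\|_{L_2}\lesssim\|\cdot\|_{H^1_{0,\Gamma_D}(\Omega)'}$. The lemma is proved first for $M_{\tilde\tria\tria}=1$ and then iterated, so the constants degrade with $M_{\tilde\tria\tria}$. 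Applied at each $\lambda$ with $\tria$ and $\tilde\tria$ corresponding to $\{\nu:(\lambda,\nu)\in I_{\delta,0}\}$ and $\{\nu:(\lambda,\nu)\in I_{\udelta,0}\}$, the gap $M_{\tilde\tria\tria}$ is bounded in terms of $L_\delta$, and this is exactly how the gradedness constant enters. Your closing remark that ``$L_\delta$ controls the number of descendant interactions'' captures this mechanism, but the wavelet-Riesz route you sketch to reach it does not work.
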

So under the mild assumption that the gradedness constants of the sets $X^\delta$ that we encounter are uniformly bounded, \new{condition} \eqref{m17} is satisfied.

   \begin{proof}
   Setting $c_{\lambda \nu}:=0$ when $(\lambda,\nu) \not\in I_{\udelta,0}\setminus I_{\delta,0}$,
   and writing $z =\sum_{\lambda \in \vee_\Sigma} \sigma_\lambda \otimes w_\lambda$ where  $w_\lambda \in \Span\{\hat{\phi}_\nu\colon (\lambda,\nu) \in I_{\delta,0}\}$, from
    $\Sigma$ and $\{2^{-|\lambda|} \sigma_\lambda \colon \lambda \in \vee_\Sigma\}$ being Riesz bases for $L_2(I)$ and $H^1(I)$,
    and ${\bf c}^\top \Theta_\delta=\sum_\lambda \sigma_\lambda\otimes \sum_\nu e_{\lambda \mu}
    c_{\lambda \nu} \hat{\phi}_\nu$, an application of Lemma~\ref{lemmaatje} given below shows that

    \begin{align*}
    &\|z+{\bf c}^\top  \Theta_\delta\|_X^2 \eqsim
    \sum_\lambda \big\{\|w_\lambda+\sum_{\nu} e_{\lambda \mu} c_{\lambda \nu} \hat{\phi}_\nu\|_{H^1(\Omega)}^2+4^{|\lambda|} \|w_\lambda+\sum_{\nu} e_{\lambda \mu} c_{\lambda \nu} \hat{\phi}_\nu\|_{H^1_{0,\Gamma_D}(\Omega)'}^2\big\}
  \\
    &\eqsim \!\big\{\! \sum_\lambda \|w_\lambda\|_{H^1(\Omega)}^2\!\!+\! 4^{|\lambda|} \|w_\lambda\|_{H^1_{0,\Gamma_D}(\Omega)'}^2 \!+\!\sum_\nu \big(2^{(\frac{2}{d}-1)\gen(\nu)}\!\!+\!4^{|\lambda|} 2^{(-\frac{2}{d}-1)\gen(\nu)} \big) |e_{\lambda \mu} c_{\lambda \nu} |^2\big\}\\
    &=\sum_\lambda \|w_\lambda\|_{H^1(\Omega)}^2+ 4^{|\lambda|} \|w_\lambda\|_{H^1_{0,\Gamma_D}(\Omega)'}^2 +
    \sum_\nu |c_{\lambda \nu} |^2
    \eqsim \|z\|_X^2+\|{\bf c}\|^2,
    \end{align*}
    with the $\eqsim$-symbol in the second line dependent on the gradedness constant.
   \end{proof}

   \new{The proof of Proposition~\ref{stableset} was based on the following lemma.}

   \begin{lemma} \label{lemmaatje} For $\tilde{\tria} \in \bbT$, and either $\bbT \ni \tria \preceq \tilde{\tria}$ and $v \in W_\tria$, or $\tria=\emptyset$, $N_{\tria,0}:=\emptyset$, and $v=0$,  and scalars $(d_\nu)_{\nu \in N_{\tilde{\tria},0}\setminus N_{\tria,0}}$, it holds that
   \begin{align} \label{equiv1}
   \|v+\sum_{\nu} d_\nu \hat{\phi}_\nu\|_{H^1(\Omega)}^2 &\eqsim \|v\|_{H^1(\Omega)}^2+\sum_{\nu} 2^{(\frac{2}{d}-1)\gen(\nu)} |d_\nu|^2\\  \label{equiv2}
     \|v+\sum_{\nu} d_\nu \hat{\phi}_\nu\|_{H^1_{0,\Gamma_D}(\Omega)'}^2 &\eqsim \|v\|_{H^{-1}(\Omega)}^2+\sum_{\nu} 2^{(-\frac{2}{d}-1)\gen(\nu)} |d_\nu|^2
   \end{align}
   with the constants hidden in the $\eqsim$-symbols only dependent on $M_{\tilde{\tria} \tria}:=\max\{\gen(\tilde{T})-\gen(T)\colon \tilde{\tria} \ni  \tilde{T} \subset T \in \tria\}$ or $M_{\tilde{\tria} \tria}:=\max\{\gen(\tilde{T})\colon
   \tilde{T} \in \tilde{\tria}\}$ for $\tria=\emptyset$.
   \end{lemma}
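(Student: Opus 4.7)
The plan is to establish the two norm equivalences in tandem, combining single-function scaling estimates with a level-wise decomposition whose stability constants are controlled by $M_{\tilde\tria\tria}$. Both equivalences should come from the same mechanism: the modified hierarchical basis acts like a biorthogonal wavelet basis at each scale.

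First I would record single-function scalings. For $\nu\in\mathfrak{N}_0$ with $\gen(\nu)=\ell$, the function $\hat{\phi}_\nu$ is a bounded-in-$L_\infty$ piecewise linear supported on a patch of diameter $\lesssim 2^{-\ell/d}$ and volume $\lesssim 2^{-\ell}$, giving by a standard scaling argument $\|\hat{\phi}_\nu\|_{H^1(\Omega)}^2\eqsim 2^{(\frac{2}{d}-1)\ell}$. By construction $\int_\Omega \hat{\phi}_\nu\,dx=0$ as soon as none of its parents lie on $\Gamma_D$, and in the contrary case $\hat{\phi}_\nu$ is supported in a patch that touches $\Gamma_D$ with vanishing Dirichlet trace; in either case a Poincaré/Friedrichs inequality on the support, combined with the $L_2$-scaling $\|\hat{\phi}_\nu\|_{L_2}^2\eqsim 2^{-\ell}$, yields $\|\hat{\phi}_\nu\|_{H^1_{0,\Gamma_D}(\Omega)'}^2\lesssim 2^{(-\frac{2}{d}-1)\ell}$.

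The upper bounds in \eqref{equiv1}--\eqref{equiv2} then follow from the triangle inequality together with the finite, uniformly bounded per-level overlap of supports and the fact that only at most $M_{\tilde\tria\tria}$ distinct generations contribute; for \eqref{equiv2} the vanishing moments are crucial so that the $H^{-1}$-norm of a single-level sum $\sum_{\gen(\nu)=\ell} d_\nu\hat{\phi}_\nu$ scales like $2^{(-\frac{2}{d}-1)\ell/2}\|d|_\ell\|_2$.

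For the lower bounds, which I expect to be the main obstacle, the natural tool is a sequence of $L_2$-bounded and $H^1$-stable quasi-projectors $Q_0,Q_1,\ldots,Q_{M_{\tilde\tria\tria}}$ onto the intermediate NVB spaces $W_{\tria_{(j)}}$ obtained from $\tria$ by inserting the generations up to the $j$th, respecting the Dirichlet boundary $\Gamma_D$ (e.g.\ Scott--Zhang-type operators). Writing $\tilde v := v+\sum_\nu d_\nu\hat{\phi}_\nu\in W_{\tilde\tria}$ and telescoping
\[
\tilde v = Q_0\tilde v+\sum_{j\geq 1}(Q_j-Q_{j-1})\tilde v,
\]
the vanishing moments of the $\hat{\phi}_\nu$ make each increment $(Q_j-Q_{j-1})\tilde v$ expressible in the generation-$j$ modified basis with $\ell^2$-controlled coefficients, yielding the reverse inequality in \eqref{equiv1}. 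The reverse inequality in \eqref{equiv2} then comes by dualising: expanding a generic $w\in H^1_{0,\Gamma_D}(\Omega)$ with the same telescoping, the vanishing moment of each $\hat{\phi}_\nu$ supplies the extra Poincaré factor $2^{-2\gen(\nu)/d}$ needed to pass from the $H^1$-scaling to the claimed $H^{-1}$-scaling.

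The hardest point is the sharp matching step: to each $d_\nu$ one must associate a precise piece of $(Q_j-Q_{j-1})\tilde v$ so that the lower bound has constants depending \emph{only} on $M_{\tilde\tria\tria}$. This rests on uniform $H^1$-stability of the $L_2$-projector (or a suitable quasi-interpolant) onto locally NVB-refined spaces — the ingredient supplied by~\cite{DST20} in $d=2$. With that input in hand, the telescoping and the biorthogonality-like consequences of the vanishing moments close the argument.
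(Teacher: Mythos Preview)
Your proposal has a genuine gap in the lower-bound argument. The norm equivalences concern the \emph{specific} splitting $\tilde v = v + \sum_\nu d_\nu\hat\phi_\nu$, i.e.\ they are equivalent to boundedness (in $H^1$ and in $H^{-1}$) of the oblique projector $P_\tria$ onto $W_\tria$ along $\Span\{\hat\phi_\nu : \nu\in N_{\tilde\tria,0}\setminus N_{\tria,0}\}$. Your Scott--Zhang telescoping does not address this projector: $Q_0\tilde v\in W_\tria$ but $Q_0\tilde v\neq v$ in general (Scott--Zhang does not annihilate the $\hat\phi_\nu$), and the increments $(Q_j-Q_{j-1})\tilde v$ have no reason to lie in $\Span\{\hat\phi_\nu:\gen(\nu)=j\}$, let alone with the prescribed coefficients $d_\nu$. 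The sentence ``the vanishing moments of the $\hat\phi_\nu$ make each increment expressible in the generation-$j$ modified basis with $\ell^2$-controlled coefficients'' is the unsubstantiated step; vanishing moments yield decay of duality pairings, not the algebraic identity you need here. Likewise, the appeal to \cite{DST20} is misplaced: $H^1$-stability of the $L_2$-projector is used elsewhere in the paper for inf--sup stability, not for this lemma.

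The paper instead first reduces to the case $M_{\tilde\tria\tria}=1$ by iteration, which is what makes the constants depend only on $M_{\tilde\tria\tria}$. In that case the oblique projector admits an explicit factorisation $\identity-P_\tria = J_\tria\circ(\identity-I_\tria)$, with $I_\tria$ the nodal interpolant onto $W_\tria$ and $J_\tria$ the map $\phi_\nu\mapsto\hat\phi_\nu$. Uniform stability of $\Phi_\tria\cup\{\hat\phi_\nu\}$ and $\Phi_\tria\cup\{\phi_\nu\}$ in weighted norms $\|h_{\tilde\tria}^s\cdot\|_{L_2}$ (elementary, from bounded basis transformations) then gives the $H^1$ result via the standard inverse inequality. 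For \eqref{equiv2} one needs in addition the non-standard inverse inequality $\|h_{\tilde\tria}\cdot\|_{L_2(\Omega)}\lesssim\|\cdot\|_{H^1_{0,\Gamma_D}(\Omega)'}$ on $W_{\tilde\tria}$ (from \cite{249.97}), which supplies the lower bound in the dual norm; your dualisation sketch does not identify this ingredient.
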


   \begin{proof} Once the equivalences are shown uniformly in any $\tria \preceq \tilde{\tria} $ \new{with} $M_{\tilde{\tria} \tria}=1$, \new{repeated application} shows them for the general case, with \new{constants only} dependent on $M_{\tilde{\tria} \tria}$. So in the following, it suffices to consider the case that $M_{\tilde{\tria} \tria}=1$. The case $\tria=\emptyset$ is easy, so we will consider the case that $\tria \in \bbT$.

   Let $\Phi_{\tilde{\tria}}=\{\phi_{\tilde{\tria},\nu}\colon \nu \in N_{\tilde{\tria},0}\}$ denote the standard nodal basis for $W_{\tilde{\tria}}$.
   For any weight function $0<w_{\tilde{\tria}} \in \prod_{T \in \tilde{\tria}} P_0(T)$, with $\|\cdot\|_{L_{2,w_{\tilde{\tria}}}(\Omega)}:=\|w_{\tilde{\tria}}^{\frac12} \cdot\|_{L_2(\Omega)}$ it holds that $\|\sum_\nu c_\nu \phi_{\tilde{\tria},\nu}\|_{L_{2,w_{\tilde{\tria}}}(\Omega)}^2 \eqsim \sum_\nu |c_\nu|^2 \|\phi_{\tilde{\tria},\nu}\|_{L_{2,w_{\tilde{\tria}}}(\Omega)}^2$, only dependent on the spectrum of the element mass matrix on a reference element, i.e., on the space dimension $d$, so independent of the weight function $w_{\tilde{\tria}}$. We refer to this equivalence by saying that $\Phi_{\tilde{\tria}}$ is (uniformly) stable w.r.t.~$\|\cdot\|_{L_{2,w_{\tilde{\tria}}}(\Omega)}$.

   Notice that for $\nu \in N_{\tilde{\tria},0} \setminus  N_{\tria,0}$, it holds that $\phi_{\tilde{\tria},\nu}=\phi_{\nu}$.
   W.r.t.~the splitting $N_{\tilde{\tria},0}=N_{\tria,0}+N_{\tilde{\tria},0}\setminus N_{\tria,0}$, the basis transformation from
   $\Phi_\tria \cup \{\hat{\phi}_{\nu}\colon \nu \in N_{\tilde{\tria},0}\setminus N_{\tria,0}\}$ to $\Phi_\tria \cup \{\phi_{\nu}\colon \nu \in N_{\tilde{\tria},0}\setminus N_{\tria,0}\}$ is of the form $\left[\begin{array}{@{}cc@{}} \identity & *\\ 0 & \identity\end{array}\right]$, and the basis transformation from the latter basis to $\Phi_{\tilde{\tria}}$ is of the form $\left[\begin{array}{@{}cc@{}} \identity & 0\\ * & \identity\end{array}\right]$.
  The entries in both non-zero off-diagonal blocks are uniformly bounded, where non-zeros can only occur for index pairs $(\nu,\tilde{\nu})$ that are vertices of the same $\tilde{T} \in \tilde{\tria}$.
     Consequently, for a family of weight functions $(w_{\tilde{\tria}})_{\tilde{\tria} \in \bbT}$ that have \emph{uniformly bounded jumps} \new{in that}
   \be \label{weight}
   \sup_{\tilde{\tria} \in \bbT} \sup_{\{T,T' \in \tilde{\tria}\colon T \cap T'\neq \emptyset\}} \frac{w_{\tilde{\tria}}|_T}{w_{\tilde{\tria}}|_{T'}}<\infty,
   \ee
 all basis transformations between the $L_{2,w_{\tilde{\tria}}}(\Omega)$-normalized bases
   $\Phi_\tria \cup \{\hat{\phi}_{\nu}\colon \nu \in N_{\tilde{\tria},0}\setminus N_{\tria,0}\}$, $\Phi_\tria \cup \{\phi_{\nu}\colon \nu \in N_{\tilde{\tria},0}\setminus N_{\tria,0}\}$ and $\Phi_{\tilde{\tria}}$ are uniformly bounded.

   Since, as we have seen, $\Phi_{\tilde{\tria}}$ is (uniformly) stable w.r.t.~$\|\cdot\|_{L_{2,w_{\tilde{\tria}}}(\Omega)}$, we conclude that also
     $\Phi_\tria \cup \{\hat{\phi}_{\nu}\colon \nu \in N_{\tilde{\tria},0}\setminus N_{\tria,0}\}$ and  $\Phi_\tria \cup \{\phi_{\nu}\colon \nu \in N_{\tilde{\tria},0}\setminus N_{\tria,0}\}$ are (uniformly) stable w.r.t.~$\|\cdot\|_{L_{2,w_{\tilde{\tria}}}(\Omega)}$.
  Because of the \emph{uniform $K$-mesh property} of $\tria \in \bbT$, examples of families of weights that satisfy \eqref{weight} are given by $(h^s_{\tilde{\tria}})_{\tilde{\tria} \in \bbT}$ for any $s \in \R$,
  where $h_{\tilde{\tria}}|_T:=2^{-\gen(T)/d} (\eqsim |T|^{1/d})$ ($T \in \tilde{\tria}$).

     \new{To show} \eqref{equiv1}, let $P_\tria\colon W_{\tilde{\tria}} \rightarrow W_\tria$ be the projector with $\ran P_\tria=W_\tria$ and $\ran (\identity-P_\tria)=\Span\{\hat{\phi}_\nu\colon \nu \in N_{\tilde{\tria},0} \setminus N_{\tria,0}\}$. Using the form of the basis \new{transform} from
   $\Phi_\tria \cup \{\phi_{\nu}\colon \nu \in N_{\tilde{\tria},0}\setminus N_{\tria,0}\}$ to
     $\Phi_\tria \cup \{\hat{\phi}_{\nu}\colon \nu \in N_{\tilde{\tria},0}\setminus N_{\tria,0}\}$, one \new{infers}
     $$
     \identity-P_\tria=J_\tria \circ (\identity-I_\tria),
     $$
     where $I_\tria$ is the nodal interpolator onto $W_\tria$, and $J_\tria$ is defined by $J_\tria \phi_{\nu}=\hat{\phi}_{\nu}$.
     Since both $\{\hat{\phi}_{\nu}\colon \nu \in N_{\tilde{\tria},0}\setminus N_{\tria,0}\}$ and $\{\phi_{\nu}\colon \nu \in N_{\tilde{\tria},0}\setminus N_{\tria,0}\}$ are uniformly stable w.r.t.~$\|h_{\tilde{\tria}}^{-1}\cdot\|_{L_2(\Omega)}$, and $\|h_{\tilde{\tria}}^{-1} \hat{\phi}_{\nu}\|_{L_2(\Omega)} \eqsim \|h_{\tilde{\tria}}^{-1} \phi_{\nu}\|_{L_2(\Omega)}$, it follows that $J_\tria$ is uniformly bounded w.r.t.~$\|h_{\tilde{\tria}}^{-1}\cdot\|_{L_2(\Omega)}$, i.e.~$ \|h_{\tilde{\tria}}^{-1} J_\tria h_{\tilde{\tria}}\|_{\cL(L_2(\Omega),L_2(\Omega))} \lesssim 1$, and so
     $$
     \|h_{\tilde{\tria}}^{-1} (\identity-P_\tria)v\|_{L_2(\Omega)} \lesssim \|h_{\tilde{\tria}}^{-1} (\identity-I_\tria)v\|_{L_2(\Omega)} \lesssim |v|_{H^1(\Omega)} \quad (v \in W_{\tilde{\tria}}).
     $$
     \new{With} the common inverse inequality $\|\cdot\|_{H^1(\Omega)} \lesssim \|h_{\tilde{\tria}}^{-1} \cdot\|_{L_2(\Omega)}$ on $W_{\tilde{\tria}}$, we \new{see} that $\identity-P_\tria$ is uniformly bounded \new{in} $H^1(\Omega)$-norm, and that on $\ran(\identity - P_\tria)$, $\|\cdot\|_{H^1(\Omega)} \eqsim \|h_{\tilde{\tria}}^{-1} \cdot\|_{L_2(\Omega)}$. The proof of \eqref{equiv1} is completed by the uniform stability of $\{\hat{\phi}_{\nu}\colon \nu \in N_{\tilde{\tria},0}\setminus N_{\tria,0}\}$ w.r.t.~$\|h_{\tilde{\tria}}^{-1}\cdot\|_{L_2(\Omega)}$, \new{using} that
     $\|h_{\tilde{\tria}}^{-1}\hat{\phi}_{\nu}\|^2_{L_2(\Omega)} \eqsim 2^{(\frac{2}{d}-1)\gen(\nu)}$.

   Moving to \eqref{equiv2}, either by $\int_\Omega \hat{\phi}_\nu \,dx=0$, or otherwise using the proximity of the Dirichlet boundary $\Gamma_D$ by an application of Poincar\'{e}'s inequality, it holds that
     $$
     |\langle \hat{\phi}_\nu,v\rangle_{L_2(\Omega)}| \lesssim 2^{-\gen(\nu)/d}\|\hat{\phi}_\nu\|_{L_2(\Omega)} |v|_{H^1(\supp \hat{\phi}_\nu)} \quad(\nu \in \mathfrak{N}_0 \setminus N_{\tria_\bot,0}).
     $$
     By using that for $T \in \tilde{\tria}$ the number of $\nu \in N_{\tilde{\tria},0}\setminus N_{\tria,0}$ for which $\supp \hat{\phi}_\nu$ has
     non-empty intersection with $T$ is uniformly bounded, and furthermore
          that $\Phi_\tria \cup \{\hat{\phi}_{\nu}\colon \nu \in N_{\tilde{\tria},0}\setminus N_{\tria,0}\}$ is uniformly stable w.r.t.~$\|h_\tria \cdot\|_{L_2(\Omega)}$, we infer that for any $z=\sum_{\nu \in N_{\tria,0}} z_\nu \phi_{\tria,\nu} \in W_{\tria}$ it holds that
         \be \label{lang}
     \begin{split}
     &\|\sum_{\nu \in N_{\tilde{\tria},0} \setminus N_{\tria,0}} d_\nu \hat{\phi}_\nu\|_{H^1_{0,\Gamma_D}(\Omega)'}=
     \sup_{0 \neq v \in H^1_{0,\Gamma_D}(\Omega)} \frac{\langle \sum_{\nu \in N_{\tilde{\tria},0} \setminus N_{\tria,0}} d_\nu \hat{\phi}_\nu,v\rangle_{L_2(\Omega)}}{\|v\|_{H^1(\Omega)}}\\
     & \lesssim \sqrt{\sum_{\nu \in N_{\tilde{\tria},0} \setminus N_{\tria,0}} |d_\nu|^2 \|h_{\tilde{\tria}}  \hat{\phi}_\nu\|_{L_2(\Omega)}^2}\\
     &\leq \sqrt{\sum_{\nu \in N_{\tria,0}} |z_\nu|^2  \|h_{\tilde{\tria}}\phi_{\tria,\nu}\|_{L_2(\Omega)}^2+
     \sum_{\nu \in N_{\tilde{\tria},0} \setminus N_{\tria,0}} |d_\nu|^2 \|h_{\tilde{\tria}}  \hat{\phi}_\nu\|_{L_2(\Omega)}^2}\\
     &\eqsim \|h_{\tilde{\tria}}(z+\sum_{\nu \in N_{\tilde{\tria},0} \setminus N_{\tria,0}}d_\nu\hat{\phi}_\nu)\|_{L_2(\Omega)}\lesssim \|z+\sum_{\nu \in N_{\tilde{\tria},0} \setminus N_{\tria,0}}d_\nu\hat{\phi}_\nu\|_{H^1_{0,\Gamma_D}(\Omega)'}
     \end{split}
     \ee
     the last inequality by application of a less common inverse inequality which proof can be found in \cite[Lemma 3.4]{249.97} for general dimensions $d$.
     From \eqref{lang} it follows that $\identity - P_\tria$ is uniformly bounded in the $H^1_{0,\Gamma_D}(\Omega)'$-norm, and also that
     $\|\sum_{\nu \in N_{\tilde{\tria},0} \setminus N_{\tria,0}} d_\nu \hat{\phi}_\nu\|^2_{H^1_{0,\Gamma_D}(\Omega)'}
     $ $\eqsim\sum_{\nu \in N_{\tilde{\tria},0} \setminus N_{\tria,0}} |d_\nu|^2 2^{(-\frac{2}{d}-1)\gen(\nu)}$, where
     we used that $\|h_{\tilde{\tria}}\hat{\phi}_{\nu}\|^2_{L_2(\Omega)} \eqsim 2^{(-\frac{2}{d}-1)\gen(\nu)}$. The proof of \eqref{equiv2} is completed.
   \end{proof}

 \subsection{The wavelet collections $\Sigma$ and $\Psi$} \label{Sconcrete2}
As wavelet basis $\Sigma=\{\sigma_\lambda\colon \lambda \in \vee_\Sigma\}$ we select the three-point hierarchical basis illustrated in Figure~\ref{hb}.
 This basis is known to be a Riesz basis for $L_2(I)$, and, after re-normalization, for $H^1(I)$ (see \cite{249.66}). It also satisfies the other assumptions made in Sect.~\ref{Swavsintime}. The wavelets up to level $\ell$ span the space of continuous piecewise linear functions on $I$ w.r.t.~the uniform partition into $2^\ell$ subintervals.
 \begin{figure}
\begin{center}
\input{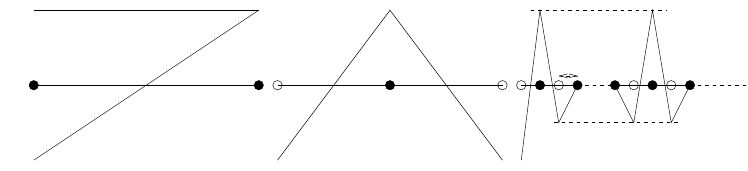_t}
\end{center}
\caption{Three-point hierarchical basis $\Sigma$. On level $0$ there are two wavelets, and on level $1$ there is one wavelet, whose parents are both wavelets on level $0$. On each level $\ell>1$ there are
 $2^{\ell-1}$ wavelets, among them near each boundary one boundary-adapted wavelet, where each wavelet has one parent being the wavelet on level $\ell-1$ whose support includes the support of its child
(so $\cS_\Sigma(\lambda)$ can be taken equal to $\supp \sigma_\lambda$). All but one wavelets have one (bdr.~wav) or two vanishing moments. }
\label{hb}
\end{figure}

 As wavelet basis $\Psi=\{\psi_\mu\colon \mu \in \vee_\Psi\}$ we take the orthonormal (discontinuous) piecewise linear wavelets, see Figure~\ref{orthowavs}. The wavelets up to level $\ell$ span the space of (discontinuous) piecewise linear functions on $I$ w.r.t.~the uniform partition into $2^\ell$ subintervals.
 \begin{figure}
\begin{center}
\input{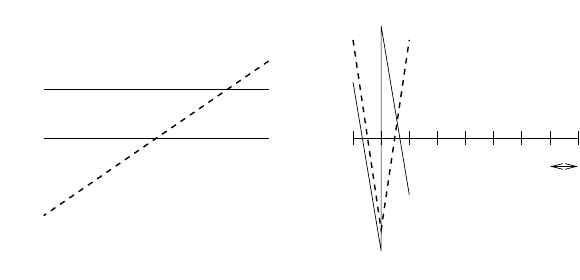_t}
\end{center}
\caption{$L_2(I)$-orthonormal (discontinuous) piecewise linear wavelet basis $\Psi$. On level $\ell=0$ there are 2 wavelets. On each level $\ell \geq 1$ there are $2^\ell$ wavelets of two types, each of them having 2 parents being the wavelets on level $\ell-1$ whose supports include the supports of their children (so $\cS_\Psi(\mu)$ can be taken equal to $\supp \psi_\mu$). The wavelets on level 0 have either 0 or 1 vanishing moment, all other wavelets have two vanishing moments.}
\label{orthowavs}
\end{figure}

\subsection{The family $({}^{\delta\!}G,{}^{\delta\!}U_0)_{\delta \in \Delta}$} \label{Sconcrete3}
The index set $\vee_\Sigma$ is naturally identified with the set of `nodal dyadic' points, see Figure~\ref{fighb}, which is the natural index set for the one-dimensional hierarchical basis that we denote by $\{\phi_\lambda\colon \lambda \in \vee_\Sigma\}$.
   \begin{figure}
\begin{center}
\includegraphics{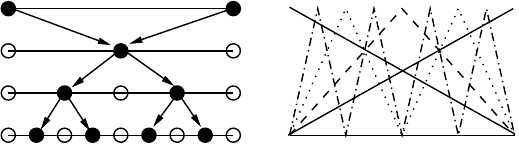}
\end{center}
\caption{Index set $\vee_\Sigma$ with parent-child relations, and the one-dimensional hierarchical basis.}
\label{fighb}
\end{figure}
Recalling that for $\delta \in \Delta$, $X^\delta=\Span\{\sigma_\lambda \otimes \phi_\nu \colon (\lambda,\nu) \in I_{\delta,0}=I_\delta\setminus(\vee_\Sigma\times \Gamma_D)\}$ for some lower set $I_\delta \subset \vee_\Sigma \times \mathfrak{N}$, we define
$$
{}^{\delta\!}G:=\Span\{\phi_\lambda \otimes \phi_\nu \colon (\lambda,\nu) \in I_\delta\},\,\,
{}^{\delta\!}U_0:=\Span\{\phi_\nu \colon (\lambda,\nu) \in I_\delta,\,\phi_\lambda(0)\neq 0\}.
$$
Since the level of resolution of these spaces is comparable to that of $X^\delta$, based on our experiences with wavelet and finite element methods we \emph{expect} that with this choice of $({}^{\delta\!}G,{}^{\delta\!}U_0)$ and the definition of $X^{\udelta\delta}$, that \emph{saturation} holds, i.e., that Assumption~\ref{saturation} assumption is valid.

Given $g \in Y'$ and $u_0 \in L_2(\Omega)$, it remains to define their approximations $({}^{\delta\!}g,{}^{\delta\!}u_0) \in ({}^{\delta\!}G,{}^{\delta\!}U_0)$. In general, the construction of these approximations depends on the data at hand. Below we give a construction that applies to general continuous $g$ and $u_0$, and that avoids quadrature issues.

For $\nu \in \mathfrak{N}$ with $\gen(\nu)=0$, let $\tilde{\phi}_\nu:=\delta_\nu$. Each $\nu \in \mathfrak{N}$ with $\gen(\nu)>0$ is the midpoint of an edge of a $T \in \tria$ with $\gen(T)=\gen(\nu)-1$. Denoting the endpoints of this edge as $\nu_1,\nu_2 \in \mathfrak{N}$, let $\tilde{\phi}_\nu:=\delta_\nu-\frac12(\delta_{\nu_1}+\delta_{\nu_2})$. Then $\{\tilde{\phi}_\nu\colon \nu \in \mathfrak{N}\} \subset C(\overline{\Omega})'$ is biorthogonal to $\{\phi_\nu\colon \nu \in \mathfrak{N}\}$.
With $\{\tilde{\phi}_\lambda\colon \lambda \in \vee_\Sigma\} \subset C(\overline{I})'$ defined analogously for the \new{temporal axis}, for $g \in C(\overline{I \times \Omega})$ and $u_0 \in C(\overline{\Omega})$ we define the interpolants
$$
{}^{\delta\!}g:=\sum_{(\lambda,\nu) \in I_\delta} (\tilde{\phi}_\lambda \otimes \tilde{\phi}_\nu)(g) \phi_\lambda \otimes \phi_\nu,\,\,
{}^{\delta\!}u_0:=\sum_{\{\nu\colon (\lambda,\nu) \in I_\delta,\,\phi_\lambda(0)\neq 0\}}   \tilde{\phi}_\nu(u_0)  \phi_\nu.
$$
Since we expect that for sufficiently smooth $g$ and $u_0$, the errors $\|g-{}^{\delta\!}g\|_{Y'}$ and $\|u_0-{}^{\delta\!}u_0\|_{L_2(\Omega)}$ are of higher order than the approximation error $\inf_{w \in X^\delta}\|u-w\|_X$, for our convenience in the adaptive Algorithm~\ref{54} we ignore errors caused by data-oscillation by setting $\eta(\cdot)\equiv0$.

Notice that setting up the matrix vector formulation of the system \eqref{52} that defines our approximation $u^\delta$
requires computing the vectors
$$
\big[\langle {}^{\delta\!}g,\psi_\mu \otimes \phi_\nu\rangle_{L_2(I \otimes \Omega)}\big]_{(\mu,\nu)\in I_{\udelta,0}^Y},\quad
\big[\langle {}^{\delta\!}u_0,\phi_\nu\rangle_{L_2(\Omega)}\big]_{\{\nu\colon (\lambda,\nu)\in I_{\delta,0},\,\sigma_\lambda(0)\neq 0\}}
$$
which can be performed in ${\mathcal O}(\dim X^\delta)$ operations because $I_\delta$ and $I_{\udelta,0}^Y$ are lower sets (and $\# I_{\udelta,0}^Y \lesssim \# I_\delta$).

\section{Numerical results}\label{Snumerics}
We test our algorithm on the heat equation, i.e.,~the parabolic problem with $a(t;\eta,\zeta)=\int_\Omega \nabla \eta \cdot \nabla \zeta d{\bf x}$, posed on a two-dimensional polygonal spatial domain $\Omega$, and Dirichlet boundary $\Gamma_D=\partial\Omega$.
Recall from \S\ref{Sconcrete2} the three-point continuous piecewise linear
temporal wavelet basis $\Sigma$, the orthonormal discontinuous piecewise linear
temporal wavelet basis $\Psi$, and the hierarchical continuous piecewise linear spatial basis $\Xi := \{\phi_\nu : \nu \in \mathfrak N_0\}$.

We consider `trial' spaces \new{$X^\delta$ spanned} by finite  subsets of $\Sigma \otimes \Xi$ whose index sets are lower sets (\new{cf.}~\eqref{eqn:Xdelta}--\eqref{eqn:Xdeltab}), and corresponding `test' spaces $Y^\delta$ spanned by finite subsets of $\Psi \otimes \Xi$ as defined in \eqref{eqn:Ydelta}--\eqref{eqn:Ydeltab}.
We construct the enlarged trial space
$X^{\udelta}$ as defined in Def.~\ref{def:Xudelta}, with corresponding test space $Y^{\udelta}$.

For a given level $N \in \N$, $\Span \{\sigma_\lambda : |\lambda| \leq N\}$ coincides with the span of the
continuous piecewise linears on an $N$-times recursive dyadic refinement of $I$, and
$\Span \{\phi_\nu \in \Xi : \gen(\nu) \leq 2N\}$ coincides with that of the
continuous piecewise linears, zero at $\partial\Omega$, on a $2N$-times recursive bisection refinement of an initial partition $\tria_\perp$.
Therefore, the span of the \emph{`full'} tensor product
$\{\sigma_\lambda \colon |\lambda| \leq N\} \otimes \{\phi_\nu \colon \gen(\nu) \leq 2N\}$
equals a space of lowest order continuous finite elements w.r.t.~a quasi-uniform shape regular product mesh  into prismatic elements.

Taking only those index pairs $(\lambda, \nu)$ for which
$2 |\lambda| + \gen(\nu) \leq 2N$ produces a \emph{`sparse'} tensor product on level $N$.
Sparse tensor \new{products overcome} the \emph{curse of dimensionality} in the sense that for smooth solutions they achieve
a rate in $X$-norm \new{equal to} the best rate in $H^1(\Omega)$-norm that can
be expected for the corresponding stationary problem on the spatial domain, here the Poisson equation; \new{cf.~}Sect.~\ref{sec:rates}.

We run our adaptive Algorithm~\ref{54} with $\theta = 0.5$ and $\xi = \tfrac{1}{2}$, computing ${}^{\delta\!}g$ and ${}^{\delta\!} u_0$ as in Sect.~\ref{Sconcrete3}.
Since we envisage that in our experiments data-oscillation errors are not dominant, for our convenience we took $\omega=\infty$.
We solve the arising linear system of~\eqref{52} using Preconditioned CG, using the previous solution as initial guess.
We then perform D\"orfler marking on the residual, yielding a minimal set $J$, and finally choose $I_{\tilde \delta}$ as the smallest lower set containing $J \cup I_\delta$.
Due to this constraint generally we add index pairs outside of the marked set, i.e.~$I_{\tilde \delta} \setminus I_{\delta} \supsetneq J$. Still, in our experiments, we \emph{observe} $\#I_{\tilde \delta} - \# I_{\delta} \lesssim \# J$ with a moderate constant.

\begin{remark} Rather we would have applied an algorithm that produces a $I_{\tilde \delta}$ such that
$I_{\tilde \delta} \setminus I_\delta$  is \emph{guaranteed} to have an, up to a multiplicative factor, smallest cardinality among all
lower sets $I_{\tilde \delta} \supset I_\delta$ that realize the bulk criterion. Such an algorithm was introduced in \cite{20.5,22.58} for `single-tree' approximation, but seems not to be available for the `double-tree' (i.e.~lower set) constraint that we need here.
\end{remark}

We compare adaptive refinement with non-adaptive
full- and sparse tensor products, and monitor the \new{$X$-norm} error estimator from
Proposition~\ref{alternative}, the residual error estimator from
Proposition~\ref{m18}, and the $L_2(\Omega)$ trace error at ${t=0}$.


\subsection{Condition numbers of preconditioner}
For the calibration of our preconditioners, we consider $\Omega := \new{(0,1)}^2$, and
compare \emph{uniformly refined} space-time meshes with \emph{locally refined} meshes with refinements towards $\{0\} \times \partial \Omega$.

The replacement of the nonlocal operator $({E_Y^\udelta}' A E_Y^\udelta)^{-1}$ in the
forward application of $S^{\udelta \delta}$ by the block-diagonal preconditioner $K_Y^\udelta$
from Sect.~\ref{sec:precX} is only guaranteed to result
in a convergent algorithm when the eigenvalues of $K_Y^\udelta {E_Y^\udelta}' A E_Y^\udelta$ are sufficiently close to one.

In Table~\ref{table:DY-precond}, we investigate the values
$\kappa_\delta := \max\{\lambda_{\max}({\bf K}_Y^\udelta {\bf A}_Y^\udelta), 1/\lambda_{\min}({\bf K}_Y^\udelta {\bf A}_Y^\udelta)\}$
with ${\bm A}_Y^\udelta$ the matrix representation of ${E_Y^\udelta} ' A E_Y^\udelta$,
and ${\bf K}_Y^\udelta$ built from spatial multigrid preconditioners ${\bf K}_\mu^\udelta$ corresponding to
$n$ V-cycles.
In each V-cycle we applied one pre- and one post Gauss--Seidel smoother. In case of a locally refined spatial mesh, on each level these Gauss--Seidel updates were restricted to the vertices whose generation is equal to that level as well as both endpoints of the edge on which these vertices were inserted (\cite{316.55}).
We see that for both uniform and locally refined space-time meshes, $\kappa_\delta$
converges to $1$ rapidly in $n$, and is essentially independent of $\dim X^\delta$.
In our examples,  $\kappa_\delta$ is \new{close enough} to one already for $n=1$.

\begin{table}[b]
  \begin{tabular}{rrrrrrrrrrr}\toprule
    $\dim X^\delta$ & $n=1$ & $n=2$ & $n=3$ & $n=4$ & $n=5$  & $n=6$\\\midrule
    \emph{uniform} \quad
    729     & 1.343  & 1.070  & 1.017  & 1.004  & 1.001  & 1.000 \\
    35937   & 1.360   & 1.075  & 1.019  & 1.004  & 1.001  & 1.000 \\
    2146689  & 1.365  & 1.077    & 1.019  & 1.004  & 1.001  & 1.000\\\midrule
    \emph{local} \quad
766         & 1.306 & 1.058 & 1.013 & 1.003 & 1.001 & 1.000 \\
30151       & 1.307 & 1.058 & 1.013 & 1.003 & 1.001 & 1.000 \\
1964797     & 1.307 & 1.058 & 1.013 & 1.003 & 1.001 & 1.000 \\
    \bottomrule
  \end{tabular}
  \caption{Values $\kappa_\delta := \max\{\lambda_{\max}({\bf K}_Y^\udelta {\bf A}_Y^\udelta), 1/\lambda_{\min}({\bf K}_Y^\udelta {\bf A}_Y^\udelta)\}$ using spatial multigrid with $n$ V-cycles.}
     \label{table:DY-precond}
\end{table}

Fixing $n=1$ for the forward application of $S^{\udelta \delta}$, we want to precondition
$S^{\udelta \delta}$ itself as well. Following Sect.~\ref{sec:precX},
we build a block-diagonal preconditioner taking ${\bf K}_\lambda^\delta$ to correspond to $m$ V-cycles of the aforementioned multigrid method now applied to ${\bf A}_\lambda^\delta+2^{|\lambda|} {\bf M}_\lambda^\delta$ with ${\bf A}_\lambda^\delta$ and ${\bf M}_\lambda^\delta$ being stiffness- \new{and} mass-matrices.
Table~\ref{table:DX-precond} shows the condition numbers of the preconditioned
matrix. We again see fast stabilization in $m$ as well as in $\dim X^\delta$.
We fix $m=3$ in the sequel. Most interestingly, in every of our example problems,
\new{only one or two PCG iterations suffice to fulfill the stopping criterion $\tilde{t}_\delta \leq \frac{t_\delta}{2}$ on the algebraic error in Algorithm~\ref{54}.}

\begin{table}[b]
  \begin{tabular}{rrrrrrrrrrr}\toprule
    $\dim X^\delta$ & $m=1$ & $m=2$ & $m=3$ & $m=4$ & $m=5$ & $m=6$\\\midrule
    \emph{uniform} \quad
    4913        & 9.196 & 6.119 & 6.048 & 6.042 & 6.041 & 6.041 \\
    35937       & 9.718 & 6.315 & 6.263 & 6.260 & 6.260 & 6.260 \\
    274625      & 9.991 & 6.750 & 6.749 & 6.751 & 6.752 & 6.752 \\
    2146689  & 10.115  & 7.080  & 7.087  & 7.088  & 7.088  & 7.088 \\
    \midrule
    \emph{local} \quad
3520        & 5.707 & 5.132 & 5.110 & 5.111 & 5.111 & 5.111 \\
30151       & 6.355 & 5.734 & 5.706 & 5.704 & 5.704 & 5.704 \\
244870      & 7.619 & 6.879 & 6.843 & 6.841 & 6.841 & 6.841 \\
1964797     & 9.353 & 8.734 & 8.703 & 8.701 & 8.701 &   8.701 \\
    \bottomrule
  \end{tabular}
  \caption{Spectral condition numbers of $K_X^\delta S^{\udelta \delta}$, using spatial multigrid with $m$ V-cycles.}
     \label{table:DX-precond}
\end{table}

\subsection{Smooth problem}
We consider the square domain $\Omega := \new{(0,1)}^2$ and prescribe
\[
  u(t,x,y) := (1 + t^2) x (1-x) y (1-y)
\]
with derived data $u_0$ and $g$. For this smooth solution, full and sparse tensor products are expected to yield the best possible error decays proportional to $(\dim X^\delta)^{-1/3}$ and $(\dim X^\delta)^{-1/2}$, respectively.

The left side of Figure~\ref{fig:smooth} shows the error progressions for the smooth problem. We
plot the error estimator \new{$\|{}^{\delta\!}u - \tilde u^\delta\|_{X^{\udelta\infty}} \eqsim \|{}^{\delta\!}u - \tilde u^\delta\|_X$} from Proposition~\ref{alternative},
the residual error estimator $\|\mathbf r^{\delta}\|$,
and $\|\gamma_0 ({}^{\delta\!}u - \tilde u^\delta)\|_{L_2(\Omega)}$.
We see that the error progressions are as expected. For this solution,
adaptive refinement yields no advantage over sparse grid refinement.
We observe a higher order of convergence for the trace at $t=0$ measured in $L_2(\Omega)$.

\begin{figure}
\includegraphics[width=0.49\linewidth]{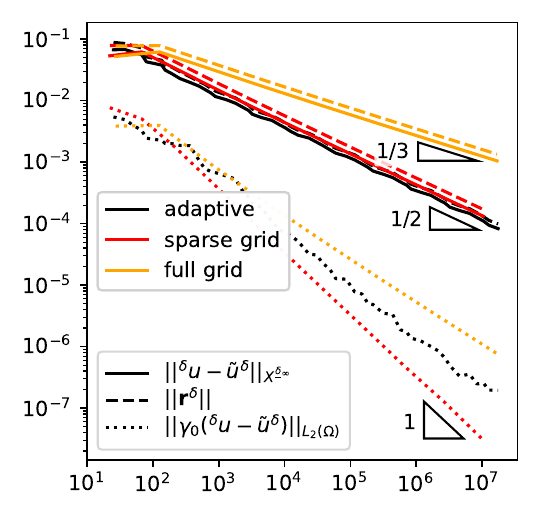}
\includegraphics[width=0.49\linewidth]{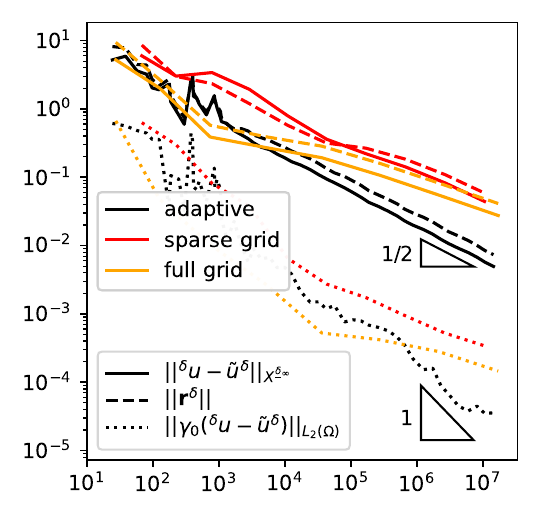}
\vspace{-1em}
\caption{Error progressions for (left) the \emph{smooth problem} and (right) the \emph{moving peak} problem. Shown: estimated $X$-norm error  (solid line),
 residual norm (dashed), and $t=0$ trace error (dotted) as a function
of $\dim X^\delta$ for adaptive (black), sparse grid (red), and full grid refinement (orange).}
\label{fig:smooth}
\end{figure}

\subsection{Moving peak problem {\cite{169.052}}}
Again for $\Omega := \new{(0,1)}^2$, we select
\[
    u(t,x,y) := x(1-x)y(1-y) \exp(-100 [(x-t)^2 + (y-t)^2]).
\]
The solution is smooth, and almost zero everywhere except on a small strip near the diagonal from $(0,0,0)$ to $(1,1,1)$ of the space-time cylinder. As $u$ is smooth, we expect sparse grid refinements to asymptotically yield the optimal error decay proportional to $(\dim X^\delta)^{-1/2}$, albeit with a terrible constant. Adaptive refinement should be able to achieve the same rate at quantitatively smaller doubletrees.

\new{The} right of Figure~\ref{fig:smooth} \new{shows} that the sparse grid rate is not (yet) optimal, while our adaptive routine is able to find the optimal rate from $\dim X^\delta \approx 10^3$ onwards. Figure~\ref{fig:moving-peak2} shows the number of basis functions $\sigma_\lambda \otimes \phi_\nu$ whose supports intersect given points in the time-space cylinder. We see the adaptation to the moving peak.

\subsection{Cylinder problem}
Selecting the L-shaped domain $\Omega :=  \new{(-1,1)^2 \setminus (-1,0]^2}$ with data
$u_0 = 0$ and $g(t,x,y) := t\cdot \1_{\{x^2 + y^2 < 1/4\}}$, the true
solution is known to be singular at the re-entrant corner and at the wall of the cylinder $\{(t,x,y)\colon x^2 + y^2 = 1/4\}$. We took this example from \cite{75.257}.
The left side of Figure~\ref{fig:singular} shows the error progression for this cylinder problem.
We see that the full grid error decay proportional to $(\dim X^\delta)^{-1/4}$ is improved to \new{$(\dim X^\delta)^{-1/3}$} by considering sparse grids. Adaptive refinement, however, achieves the best possible error decay proportional to $(\dim X^\delta)^{-1/2}$, recovering the rate for a smooth solution.

\begin{figure}
\centering
\includegraphics[trim={2.5cm 1.0cm 1.5cm 0.5cm},clip,width=\linewidth]{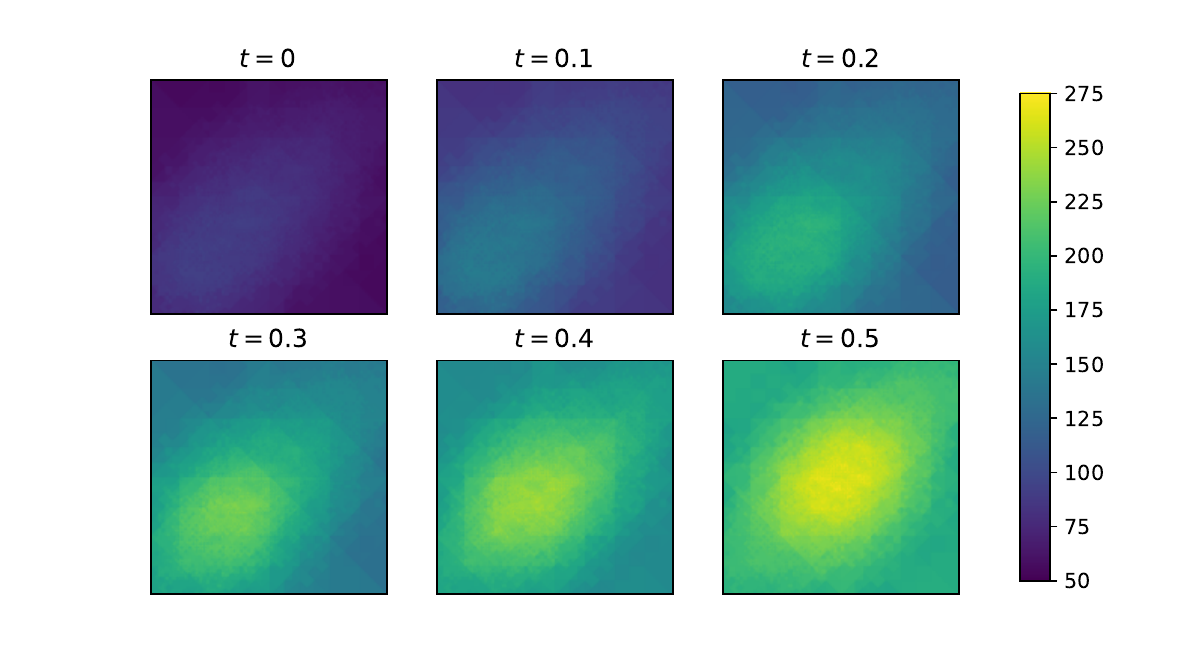}
\caption{\emph{Moving peak} problem, adaptive lower set with $\dim X^\delta = 89\,401$. Shown: $\#\{(\lambda, \nu) \in I_\delta \colon (t,x,y) \in \supp \sigma_\lambda\otimes \phi_\nu\}$ for a selection of times $t$.}
  \label{fig:moving-peak2}
\end{figure}

\subsection{Singular problem}
\new{Again for $\Omega := \new{(-1,1)^2 \setminus (-1,0]^2}$, we select data $u_0 \new{=} \1$,
and $g = 0$}. The solution has a strong singularity  along $\{0\} \times \partial \Omega$ due to the incompatibility of initial- and boundary conditions, in addition to the singularity at the re-entrant corner $(0,0)$. At the right of Figure~\ref{fig:singular},
for uniform refinement, we see the extremely slow error decay proportional to $(\dim X^\delta)^{-1/11}$, already found in \cite{75.257}. Interestingly, sparse grid refinement offers no rate improvement over full grid refinement.
The adaptive algorithm yields a much better error decay proportional to $(\dim X^\delta)^{-2/5}$.
We observed that increasing the D\"orfler marking parameter to $\theta = 0.7$  decreases the convergence rate to $-1/3$, whereas a $\theta$ smaller than $0.5$ did not improve the rate beyond $-2/5$.
Looking at Figure~\ref{fig:singular-centers}, we see strong adaptivity towards
$\{0\} \times \partial \Omega$ and $I \times \{(0,0)\}$, \new{even seeing} basis functions $\sigma_\lambda \otimes \phi_\nu$ \new{of} $X^\delta$ whose barycenter is at $t=2^{-14} \approx 10^{-4}$.

\begin{figure}
\centering
\includegraphics[width=0.49\linewidth]{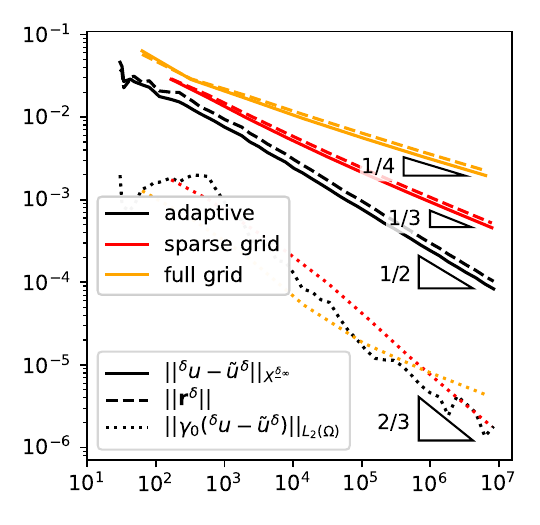}
\includegraphics[width=0.49\linewidth]{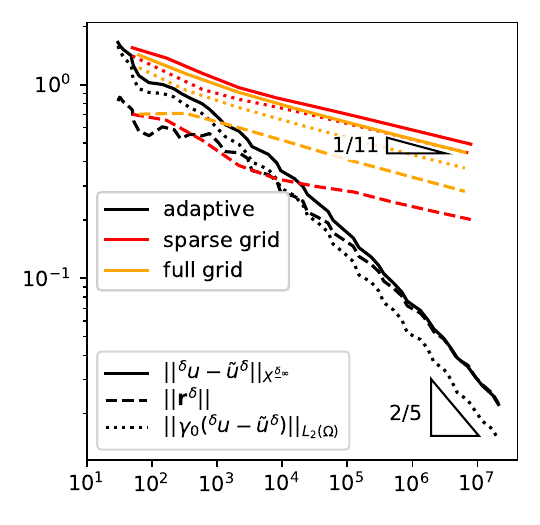}
\vspace{-1em}
  \caption{Error progressions for (left) the \emph{cylinder} problem and (right) the \emph{singular} problem. Shown: estimated $X$-norm error  (solid line),
 residual norm (dashed), and $t=0$ trace error (dotted) as a function
of $\dim X^\delta$ for adaptive (black), sparse grid (red), and full grid refinement (orange).}
  \label{fig:singular}
\end{figure}
\begin{figure}
\includegraphics[width=0.49\linewidth]{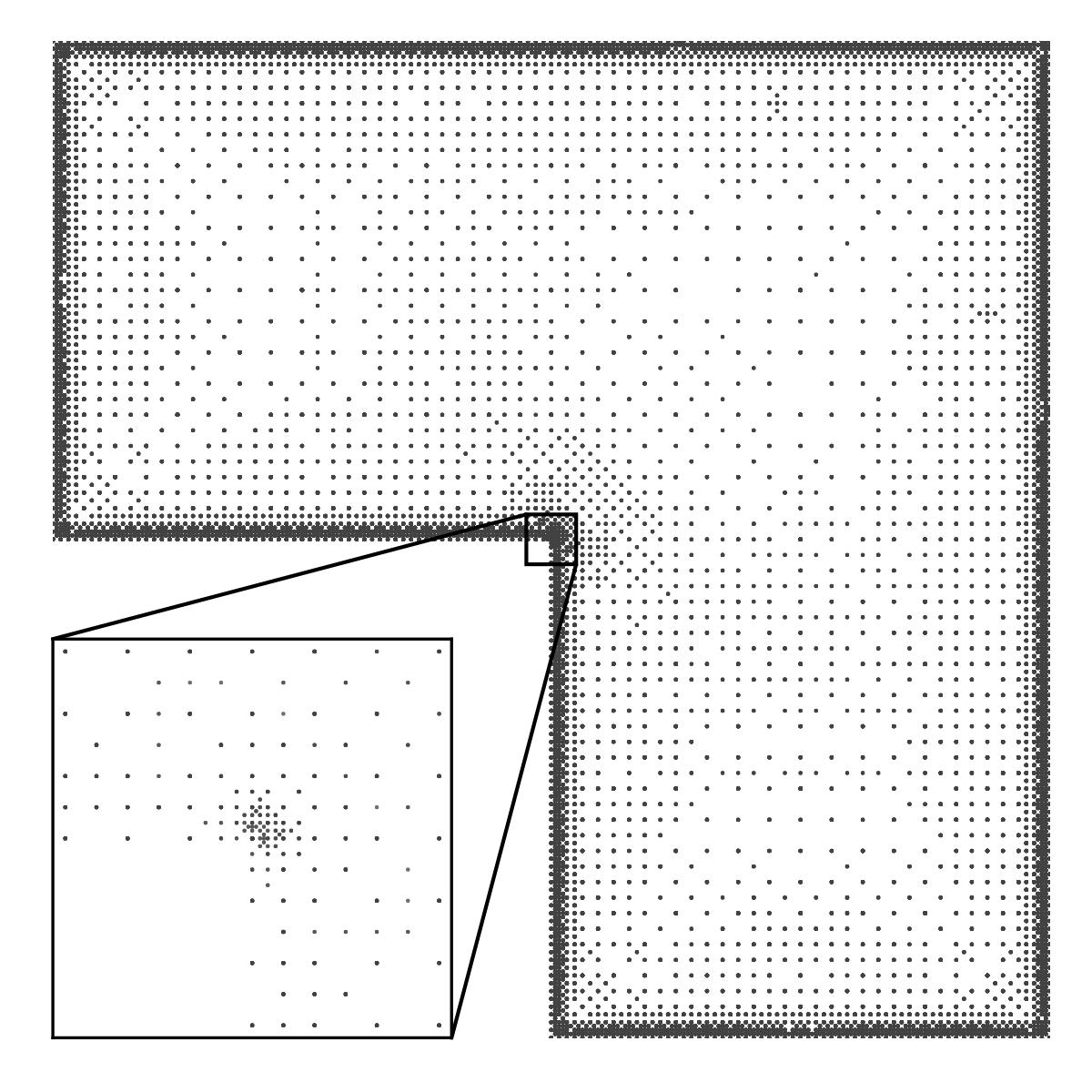}
\hfill
\includegraphics[width=0.49\linewidth]{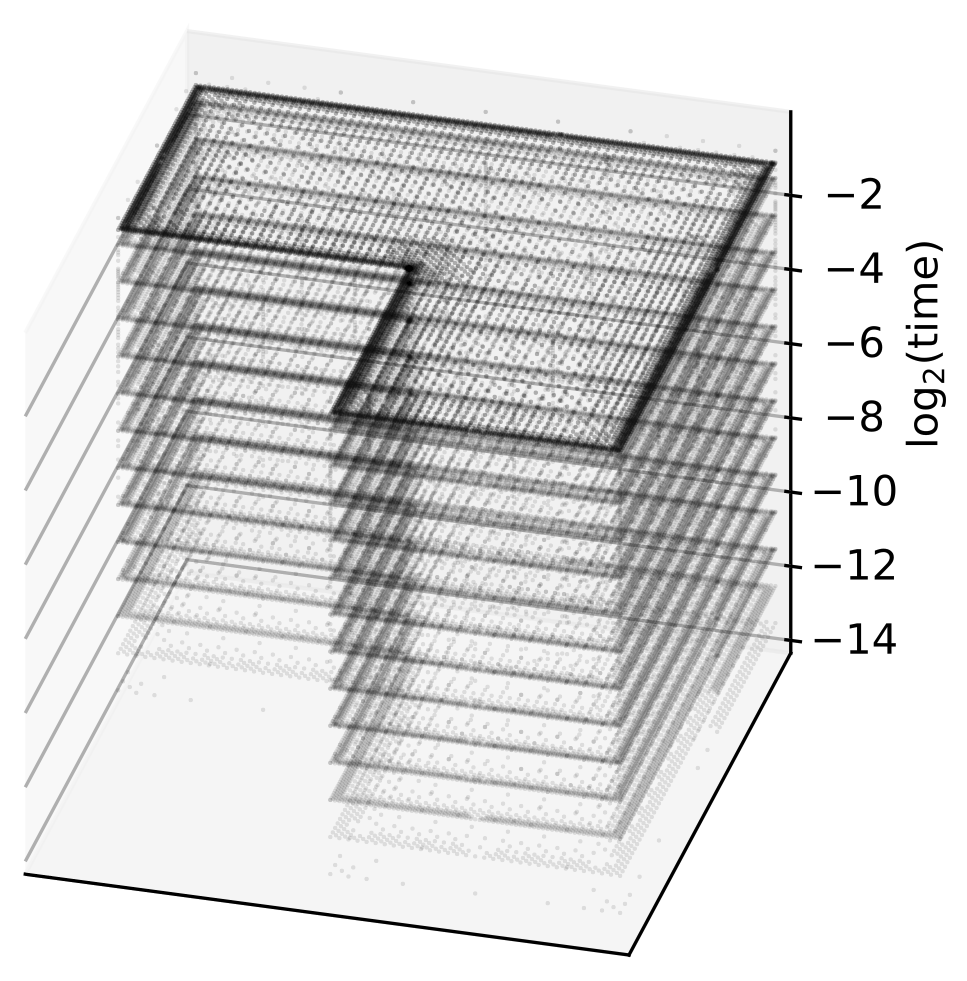}
\caption{Barycenters of supports of basis functions $\sigma_\lambda \otimes \phi_\nu$ spanning $X^\delta$  generated by Algorithm~\ref{54} of dimension 81\,074 for the \emph{singular} problem. Left: a top-down view, with a 10$\times$ zoom to the origin; right: centers in spacetime, logarithmic in time.}
\label{fig:singular-centers}
\end{figure}

\subsection{Gradedness and error reduction}
In Sect.~\ref{sec:adaptive} we used~\eqref{m17} to demonstrate proportionality of
$\|{\bf r}^\delta\|$ and $\|u - u^\delta\|_X$, as well as a constant error reduction
in each iteration of the adaptive algorithm. In Proposition~\ref{stableset}, we
showed that~\eqref{m17} holds when the gradedness $L_\delta$ of
Definition~\ref{def:gradedness} is uniformly bounded.

In the left picture of Figure~\ref{fig:gradedness}, we see however a more than expected increase
in gradedness, where in particular for the singular problem we observe a logarithmic increase in terms of $\dim X^\delta$. However, this turns out not to be a problem in
practice: Figures~\ref{fig:smooth} and~\ref{fig:singular} demonstrate that the
residual error $\|{\bf r}^\delta\|$ and the estimated $X$-norm error
$\|{}^{\delta\!}u - \tilde u^\delta\|_{X^{\udelta\infty}}$ are very close, and even converge for the
singular problem. Moreover, in the right picture of Figure~\ref{fig:gradedness}, we see
a constant error reduction of $\breve \rho \approx 0.89$ at every step of
the adaptive algorithm, and hence, that the conclusion of Theorem~\ref{thm:main} holds in practice.
\begin{figure}
    \centering
    \includegraphics[width=\linewidth]{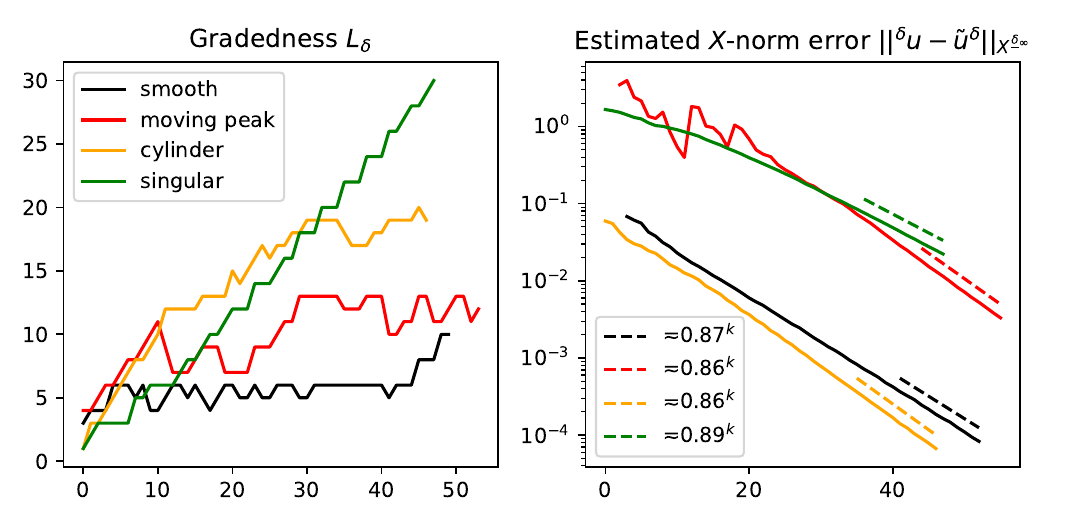}
    \vspace{-2em}
    \caption{Gradedness and estimated $X$-norm error at every iteration of the adaptive loop, for the four different model problems under consideration.}
    \label{fig:gradedness}
\end{figure}

\subsection{Total runtime and memory consumption}
Figure~\ref{fig:constant} shows the total runtime and peak memory consumption after every iteration of the adaptive algorithm. The top row shows absolute values, and the bottom row values relative to $\dim X^\delta$.

The left of the figure shows that the adaptive algorithm runs in optimal linear time in the dimension of the current trial space.

The right of the figure shows that the peak memory is linear as well, stabilizing to around 15kB per degree of freedom.
This is relatively high, mainly because our implementation uses trees rather than hash maps to represent vectors to ensure a linear-time implementation of the matrix-vector products (cf.~Rem.~\ref{rem:matvec}).

\begin{figure}
    \centering
    \includegraphics[width=\linewidth]{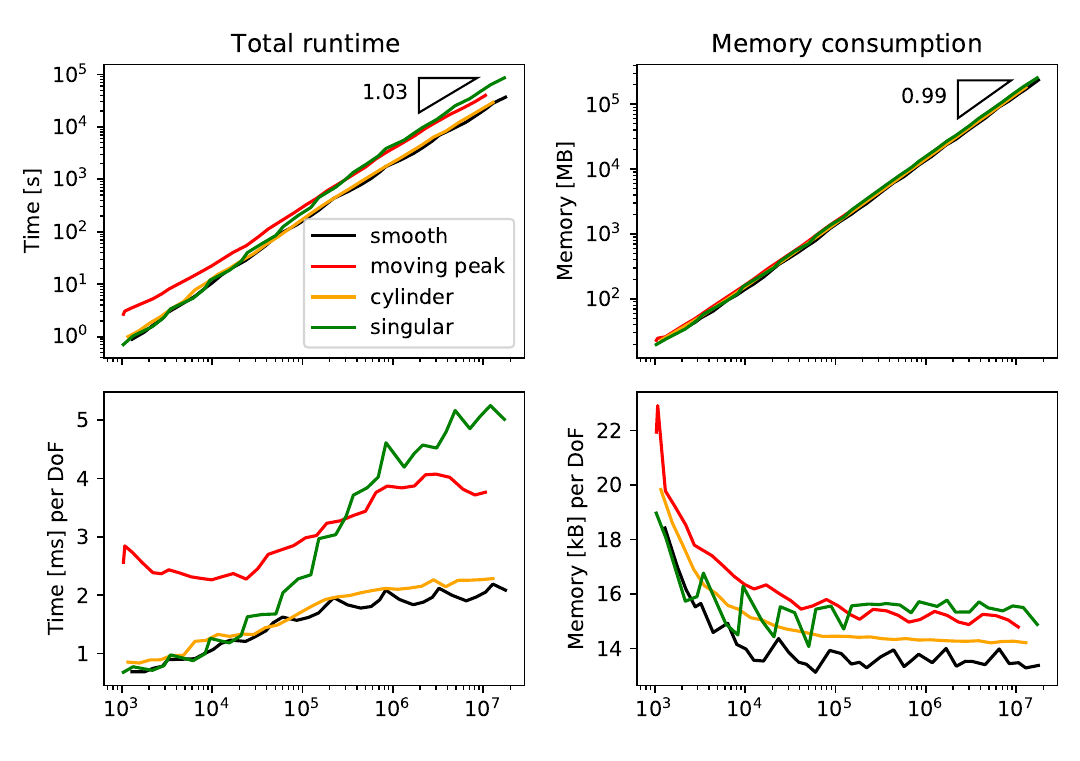}
    \vspace{-2em}
    \caption{Total runtime and peak memory consumption as function of $\dim X^\delta$, measured after every iteration of the adaptive loop, for the four different model problems.}
    \label{fig:constant}
\end{figure}

\section{Conclusion} \label{Sconclusion}
\new{We constructed} an adaptive solver for a space-time variational formulation of parabolic evolution problems.
The collection of trial spaces are given by the spans of sets of tensor products of wavelets-in-time and hierarchical basis functions-in-space.
Compared to our previous works \cite{38.4,243.867} where we employed `true' wavelets also in space, the theoretical results are weaker.
\new{We demonstrated} $r$-linear convergence of the adaptive routine, but have not shown optimal rates at linear complexity. On the other hand, the \new{runtimes we} obtained with the current approach are much better.

\newcommand{\etalchar}[1]{$^{#1}$}

\end{document}